\documentclass[10pt, a4paper]{article}

\usepackage{color,graphicx}
\usepackage{bbm}
\RequirePackage[OT1]{fontenc}
\RequirePackage[utf8]{inputenc}
\usepackage[colorinlistoftodos]{todonotes}
\usepackage{newunicodechar}
\newunicodechar{ϵ}{\varepsilon}
\usepackage{comment}

\RequirePackage{amsthm,amsmath,amsfonts,amssymb,amscd,mathrsfs}
\RequirePackage[colorlinks,citecolor=blue,urlcolor=blue]{hyperref}
\usepackage{enumitem}
\usepackage{mathtools}
\usepackage{authblk}
\usepackage[style=numeric,sorting=nyt]{biblatex}
\usepackage{cancel}

\DeclareUnicodeCharacter{03A9}{\ensuremath{\Omega}}

\usepackage[
    left=1in,
    right=1in,
    top=1in,
    bottom=1in
]{geometry}

\graphicspath{ {./figures/} }

\numberwithin{equation}{section}

\newtheorem{definition}{Definition}[section]
\newtheorem{theorem}[definition]{Theorem}

\newtheorem{lemma}[definition]{Lemma}
\newtheorem{proposition}[definition]{Proposition}

\newtheorem{assumption}[definition]{Assumption}

\newtheorem{remark}[definition]{Remark}

\renewcommand{\P}{\mathbb{P}}

\newcommand{\E}{\mathbb{E}}
\newcommand{\R}{\mathbb{R}}

\newcommand{\N}{\mathbb{N}}

\newcommand{\eps}{\varepsilon}

\renewcommand\labelenumi{(\roman{enumi})}
\renewcommand\theenumi\labelenumi
\newenvironment{enumerate*}%

\makeatletter  
\@ifundefined{@currsizeindex} 
  {\RequirePackage{relsize}\let\dolarger\relsize} 
  {\def\dolarger#1{\larger[#1]}} 

\newcommand*\@@bigtimes[2]{\vphantom{\prod} 
  \vcenter{\hbox{\dolarger{4}$\m@th#1\mkern-2mu\times\mkern-2mu$}}} 
\newcommand*\bigtimes{\mathop{\mathpalette\@@bigtimes\relax}\displaylimits} 
\makeatother

\def\C{\mathbb{C}}\def\E{\mathbb{E}}\def\N{\mathbb{N}}\def\P{\mathbb{P}}\def\R{\mathbb{R}}\def\1{\mathbbm{1}}

\usepackage{titlesec}

\titleformat*{\section}{\Large\bfseries}
\titleformat*{\subsection}{\large\bfseries}
\titleformat*{\subsubsection}{\normalsize\bfseries}

\newlength{\myeqskip}  
\AtBeginDocument{%
    \setlength\abovedisplayskip{\myeqskip}%
    \setlength\belowdisplayskip{\myeqskip}%
    \setlength\abovedisplayshortskip{\myeqskip-7pt}%
    \setlength\belowdisplayshortskip{\myeqskip}}

\allowdisplaybreaks

\usepackage{fancyhdr}
\begin{document}

\thispagestyle{empty}

\pagenumbering{arabic}

\title{Minimax Operator Learning for Holomorphic Maps between Hilbert Spaces}

\author{
    Ly Duc Hoang\footnote{
        Department of Mathematics, Humboldt University of Berlin, Germany.
        Email: {ly.duc.hoang@hu-berlin.de}
    }~,
    Sven Wang\footnote{
        Department of Mathematics, \'Ecole Polytechnique F\'ed\'erale de Lausanne, Switzerland. Email:
        {sven.wang@epfl.ch}
    }~, 
    Jakob Zech\footnote{
        Institute for Mathematics, Heidelberg University, Germany.
        Email: {jakob.zech@uni-heidelberg.de}
    }
}

\maketitle

\begin{abstract}
  We consider the problem of learning nonlinear operators $G_0: \mathcal X\to \mathcal Y$ from finitely many noisy observations. While finite-regularity classes such as Lipschitz- or $C^k$-operators suffer from the \textit{curse of sample complexity} (i.e.~algebraic sample complexity is impossible), algebraic convergence rates can be obtained for holomorphic operators. In this paper we study information-theoretic minimax rates for estimating holomorphic operators. We provide algebraic upper and lower bounds of the metric entropy of holomorphy classes (matching up to an arbitrarily small loss in the exponent) as well as statistical minimax lower bounds in both the random-design and a Gaussian white noise model. For the mean squared error, our novel lower bounds match our upper bounds (up to an arbitrarily small loss in the exponent) and coincide with the convergence rates obtained previously for related holomorphy classes \parencite{reinhardt2026statistical}. Finally, we apply the theory to the parameter-to-solution map and the parameter-to-quantity-of-interest map arising from the elliptic Darcy flow equation.

    \medskip
    \noindent\textbf{Keywords:}{
        operator learning, 
        holomorphic operators, 
        minimax estimation, 
        metric entropy, 
        polynomial chaos expansions,
        parametric partial differential equations
    }
\end{abstract}

\section{Introduction}\label{section: Introduction}
Learning nonlinear maps between function spaces is a central problem in modern scientific computing, uncertainty quantification and operator learning \parencite{kovachki2023neural, li2020fourier, bhattacharya2021model, lu2021learning, nelsen2024operator, kovachki2024operator}. A canonical example is the parameter-to-solution map of a partial differential equation (PDE), mapping a spatially varying coefficient, a forcing term or a domain parametrization to the corresponding PDE solution. From a statistical viewpoint, the central objective is to find methods and quantify the performance in recovering such an a priori unknown operator from finitely many noisy observations. 

Let $\mathcal X$ and $\mathcal Y$ be separable Hilbert spaces and let 
\begin{align*}
    G_0 : \mathcal X \to \mathcal Y
\end{align*}
be some unknown nonlinear operator. A first natural observation model consists of independent input-output pairs $(X_i, Y_i)_{i=1}^n$, where
\begin{equation}\label{eq: random design}
    X_i \overset{\text{i.i.d.}}{\sim} \gamma,\qquad Y_i = G_0(X_i) + \sigma \varepsilon_i, \qquad i = 1, \dots, n,
\end{equation}
$\gamma$ is some probability measure on $\mathcal X$, $\sigma>0$ is some scalar and $(\varepsilon_i)_{i=1}^n$ are independent Gaussian white noise processes indexed by $\mathcal Y$. For $\mathcal X = \R^d$ and $\mathcal Y = \R$, this reduces to the classical random-design nonparametric regression problem. (Statistical) Operator Learning may therefore be viewed as an infinite-dimensional analogue of classical nonparametric regression~\parencite{reinhardt2026statistical}.

A fundamental difficulty here is that both the input and the output function spaces might be infinite-dimensional. Classical finite-dimensional, finite-regularity assumptions in general lead to \textit{the curse of dimensionality}, in the sense that the corresponding deterministic approximation rates and statistical estimation rates are dimension-dependent and deteriorate exponentially with the dimension $d$. If the input and output are infinite-dimensional, even with anisotropic dependence in the input coordinates and stronger regularity assumptions in the output space, broad classes of Lipschitz, or finite-regularity operators suffer from \textit{the curse of parametric complexity} in approximation and the \textit{curse of sample complexity} in statistical estimation \parencite{lanthaler_parametric_2025, adcock2025towards}, leading to rates which decay only logarithmically in the sample size $n$. This raises the following natural question:

\begin{quote}
    Which structural assumptions on an operator are sufficiently strong to \textit{break the curse of sample/parametric complexity}, while being large enough to contain relevant parameter-to-solution operators arising in the context of PDEs?
\end{quote}

One possible such structure is to consider maps which depend analytically/holomorphically on their input parameters. Several works in approximation theory and uncertainty quantification have shown that the parameter-to-solution map of many parametric PDEs admits holomorphic extensions with respect to their anisotropically weighted input coordinates \parencite{cohen_convergence_2010, cohen_analytic_2011, cohen2015approximation, cohen2018shape, harbrecht2016analysis, henriquez2021shape, hiptmair2018large}. These works have in common that the holomorphic dependence implies the sparsity of the corresponding polynomial chaos expansions of the holomorphic map and leads to algebraic approximation rates despite both the input and output being infinite-dimensional.

The goal of this paper is to establish analogous statistical results as well as to provide minimax lower bounds, complementing the upper bounds derived in \parencite{reinhardt2026statistical}. 

\subsection{Main Results}
\paragraph{A Gaussian white noise model.} 
In addition to the random-design model, we consider a Gaussian white noise experiment. White noise models play a central role in classical nonparametric statistics, because they are often mathematically and functional-analytically convenient "continuum limit", see for example \parencite[Section 6.1.1]{gine_mathematical_2021} or \parencite{brown1996asymptotic, reiss_asymptotic_2008} for its relation to the random design model. Writing 
\begin{align*}
    H = L^2(\mathcal X,\mathcal Y, \gamma)
\end{align*}
for the space of square-integrable, Bochner-measurable, $\mathcal Y$-valued operators, we generalize this observation model to general separable Hilbert spaces by observing the Gaussian process
\begin{align*}
    \mathbb Y_{G_0}^{(n)}(G) = \langle G_0, G\rangle_H + \frac{\sigma}{\sqrt{n}}\mathbb W_H(G), \qquad G\in H,
\end{align*}
where $\mathbb W_H$ denotes a Gaussian white noise process on $H$. We refer to this as the \emph{operator white noise model} or if $H$ is just an arbitrary, separable Hilbert space, by the \emph{generalised white noise model}. The Hilbert-space structure makes likelihood calculations and projection methods mathematically tractable. For a more precise mathematical formulation, we refer to Section \ref{section: lower bounds}.

\paragraph{Metric entropy of classes of holomorphic operators} Following the classical notion of metric entropy due to \textcite{kolmogorov1959entropy}, we will first quantify the \textit{size} of the class of holomorphic operators through its metric entropy \parencite{kolmogorov1959entropy} with respect to the $L^2(\mathcal X, \mathcal Y, \gamma)$-norm and with respect to the supremum norm. We show that, despite infinite-dimensional inputs and outputs, the metric entropy only scales algebraically with $\varepsilon\to 0$. For the $L^2$-case, the effective complexity is determined by the parameter 
\begin{align*}
    \kappa_2 = \min\{s-1/2, t\}.
\end{align*}
Up to an arbitrarily small loss in the exponent, the metric entropy then scales like $\varepsilon^{-1/\kappa_2}$.
Analogously, for the supremum loss, the effective complexity is determined by the parameter 
\begin{align*}
    \kappa_\infty = \min\{s-1,t\}
\end{align*}
and the metric entropy scales like $\varepsilon^{-1/\kappa_\infty}$.

The same combinations of input and output regularity parameters $s$ and $t$ appear in the exponent of the $L^2$- and $L^\infty$- approximation rates for neural and spectral operator surrogates \parencite{herrmann_neural_2024}. More details can be found in Section \ref{section: metric entropy}.

\paragraph{Minimax Lower Bounds for Operator Learning of Holomorphic Maps}
The class of holomorphic operators allows for sparse approximation in the polynomial chaos expansion \parencite{cohen_convergence_2010, cohen_analytic_2011,chkifa_breaking_2015}. The summability of the coefficients yields coefficient bounds and algebraic $N$-term approximation rates, which are used in the proof of the upper bounds of the metric entropy and to control the \textit{bias} of the statistical projection estimators based on the polynomial chaos expansion. Balancing this against the stochastic error arising from the observation noise yields the upper bound for the statistical estimation rates. 

The statistical lower bounds rely on information-theoretic arguments (see \textcite{yang1999information} and also \parencite{tsybakov_introduction_2008,gine_mathematical_2021}).  In both observation models, the Kullback-Leibler divergence between the two observation laws corresponding to two operators $G, G'\in L^2(\mathcal X, \mathcal Y, \gamma)$ is directly controlled by 
\begin{align*}
    \mathrm{KL}(\P_{G}, \P_{G'}) = \frac{n}{2\sigma^2}\|G-G'\|^2_{L^2(\mathcal X, \mathcal Y, \gamma)}.
\end{align*}
We study classes of holomorphic operators $\mathcal H^{s,t}$ between Hilbert spaces, where $s>1$ describes the polynomial decay in the input coordinates, while $t>0$ quantifies the decay
in the output space. The precise definition of $\mathcal H^{s,t}$ is given in Section~\ref{section: holomorphy class}.
Following the standard methods for statistical lower bounds, we combine the $L^2$-metric entropy bounds with the construction of suitable packings in $\mathcal H^{s,t}$ with respect to the $L^2$- or $L^\infty$-metric.

This leads to the main statistical result of this paper regarding the minimax lower bounds in both the random design model and the operator white noise model. Note that corresponding upper bounds in the random design model have recently been proven in \parencite{reinhardt2026statistical}. 
We denote by 
\begin{equation*}
    \mathcal H_{\mathrm{lin}}^{s,t}\coloneqq{G\in \mathcal H^{s,t}: G \text{ is affine}}
\end{equation*}
the subset of (affine) linear operators in $\mathcal H^{s,t}$. It turns out that the lower bounds already hold over this smaller class. In particular, the resulting rates match the algebraic $L^2$- minimax lower bounds up to an arbitrarily small loss in the exponent.
\begin{theorem}[Informal: Minimax Operator Learning upper and lower bounds]\label{theorem: main minimax}
    Let $s>1$, $t>0$ and $\sigma >0$, and fix $\tau>0$. Set $\kappa_2 \coloneqq \min\{s-1/2,t\}$. In both the operator white noise model \eqref{eq: white noise} and the random-design white noise model \eqref{eq: random design}, the squared $L^2$-minimax risk over $\mathcal H^{s,t}$ satisfies
    \begin{align}\label{eq: L2 minimax rate}
        n^{- \frac{2\kappa_2}{2\kappa_2 + 1} - \tau}\lesssim \inf_{\widehat G_n}
\sup_{G_0 \in \mathcal H_{\mathrm{lin}}^{s,t}}
\mathbb E_{G_0}[|\widehat G_n-G_0|^2_{L^2(\gamma,\mathcal Y)}]\leq\inf_{\widehat G_n} \sup_{G_0 \in \mathcal H^{s,t}} \mathbb E_{G_0}[\|\widehat G_n - G_0\|^2_{L^2(\gamma, \mathcal Y)}] \lesssim   
            n^{- \frac{2\kappa_2}{2\kappa_2 + 1} + \tau}.
    \end{align}
    The implicit constants are independent of $n$. Here and below, the infimum is taken over all measurable maps $\widehat G_n$ with respect to the data. 
    
    For the $L^\infty$-loss, we obtain the following lower and upper bounds on the minimax risk. Setting $\kappa_\infty \coloneqq\min\{s-1,t\}$, we have 
    \begin{align}\label{eq: Linfty minimax rate}
        n^{-\frac{\kappa_{\infty}}{2\kappa_2 + 1} - \tau}\lesssim \inf_{\widehat G_n}
\sup_{G_0 \in \mathcal H_{\mathrm{lin}}^{s,t}}
\mathbb E_{G_0}[|\widehat G_n-G_0|_{\infty, \mathcal Y}]\leq \inf_{\widehat G_n}
        \sup_{G_0 \in \mathcal H^{s,t}}
        \mathbb E_{G_0}
        \left[
            \|\widehat G_n-G_0\|_{\infty, \mathcal Y}
        \right]\lesssim n^{- \frac{\kappa_{\infty}}{2\kappa_{\infty} +3} + \tau}.
    \end{align}
  \end{theorem}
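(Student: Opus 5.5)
The proof splits into four parts: the $L^2$ lower bound, the $L^2$ upper bound, the $L^\infty$ lower bound and the $L^\infty$ upper bound. The middle inequalities in \eqref{eq: L2 minimax rate} and \eqref{eq: Linfty minimax rate} are trivial since $\mathcal H^{s,t}_{\mathrm{lin}}\subset\mathcal H^{s,t}$. For the lower bounds I will use Fano's inequality (or Assouad's lemma) on a finite subset of affine operators. In both observation models the Kullback--Leibler divergence equals $\frac{n}{2\sigma^2}\|G-G'\|_{L^2}^2$, so one argument covers both. In the white noise model this is Cameron--Martin. In the random design model it follows from tensorisation over the $n$ pairs together with the Cameron--Martin formula for $\mathcal Y$-white noise conditional on $X_i$.

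\textbf{$L^2$ lower bound.} I will build a hypercube of affine maps
\begin{align*}
G_\theta(x)=\delta\sum_{(j,k)\in I_m}\theta_{jk}\, x_j\, e_k,\qquad \theta\in\{\pm1\}^{I_m},
\end{align*}
where $x_j$ are the normalised input coordinates (orthonormal in $L^2(\gamma)$ under the product-measure structure of $\gamma$), $e_k$ is an output basis, and $I_m$ is an index block of size $|I_m|\asymp m$. The block is chosen according to which term in $\kappa_2$ is active. If $\kappa_2=t$, take a single input coordinate and output indices $k\in[m,2m)$; the weight $k^{t}$ forces $\delta\lesssim m^{-t-1/2}$ (up to $m^{\tau}$) via the summability condition in $\mathcal H^{s,t}$. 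If $\kappa_2=s-1/2$, take input indices $j\in[m,2m)$ with $\delta\lesssim m^{-s}$ up to small losses. I will assume the earlier characterisation of membership in $\mathcal H^{s,t}$ for affine maps through weighted summability of the coefficients. In both cases $\delta\asymp m^{-\kappa_2-1/2-\tau'}$, and Varshamov--Gilbert gives $2^{cm}$ hypotheses with pairwise squared distance $\gtrsim m\delta^2$ and KL divergence $\lesssim n m\delta^2$. The Fano condition $n m\delta^2\lesssim m$ gives $\delta^2\asymp n^{-1}$, so $m\asymp n^{1/(2\kappa_2+1)}$. The squared risk is then $m\delta^2\asymp m^{-2\kappa_2}\asymp n^{-2\kappa_2/(2\kappa_2+1)}$, and the arbitrarily small exponent losses are absorbed into $\tau$.

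\textbf{$L^\infty$ lower bound.} I will reuse the same packing. For linear functions on the support of $\gamma$ (coordinates in $[-1,1]$), $\sup_x\|\delta\sum_{j,k}\theta_{jk}x_je_k\|$ is of order $\delta m$ in the input-block case and $\delta\sqrt m$ in the output-block case. With $m=n^{1/(2\kappa_2+1)}$ and $\delta\asymp m^{-\kappa_2-1/2}$, separation $\delta m$ equals $m^{1/2-\kappa_2}$, which is $m^{-\kappa_\infty}$ when $\kappa_2=s-1/2$. This gives $n^{-\kappa_\infty/(2\kappa_2+1)}$. In the case $\kappa_2=t=\kappa_\infty$ one gets $\delta\sqrt m=m^{-t}$ directly. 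By Fano, the risk is at least half the separation.

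\textbf{Upper bounds.} I will take the polynomial chaos projection estimator of \parencite{reinhardt2026statistical}: estimate the Legendre coefficients on a downward-closed index set $\Lambda_N$ and truncate the outputs at level $K$. The variance is $\asymp NK/n$ (order $\sigma^2/n$ per empirical coefficient), and the bias comes from the $N$-term and output-truncation rates given by summability. These are $N^{-(s-1/2)}$ and $K^{-t}$ in $L^2$, and $N^{-(s-1)}$ and $K^{-t}$ in $L^\infty$. Balancing in $L^2$ gives $n^{-2\kappa_2/(2\kappa_2+1)+\tau}$; I may alternatively cite that paper or use the Yang--Barron entropy upper bound with the $L^2$ metric entropy $\varepsilon^{-1/\kappa_2}$. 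For $L^\infty$, the stochastic error in sup norm is bounded by $\sum_{\nu\in\Lambda}\|\nu\text{-th polynomial}\|_\infty|\hat c_\nu-c_\nu|$. Legendre sup-norm growth costs $\prod(1+2\nu_j)^{1/2}$, which after summation gives roughly $N\sqrt{NK/n}$. Balancing this against $N^{-\kappa_\infty}$ yields the exponent $\kappa_\infty/(2\kappa_\infty+3)$.

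The main obstacle is verifying that the hypercube affine maps really lie in $\mathcal H^{s,t}$ with the claimed $\delta$. This depends on the precise weighted holomorphy definition, and is where the $\tau$-loss enters, e.g.\ through a $\log$ or $\ell^{p}$ summability slack. The other delicate step is the sup-norm control of the Legendre polynomials in the $L^\infty$ upper bound.
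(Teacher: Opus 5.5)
Your lower-bound route is valid and differs from the paper's. Your upper-bound construction, however, has a genuine gap.

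\textbf{Upper bounds.} You estimate on the tensor block $\Lambda_N\times\{1,\dots,K\}$, i.e.\ the same polynomial index set for every output component, which gives $NK$ coefficients. The balancing you assert then fails.

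In $L^2$, minimizing $N^{-2(s-1/2)}+K^{-2t}+\sigma^2NK/n$ only gives $n^{-2\bar\kappa/(2\bar\kappa+1)}$ with $1/\bar\kappa=1/(s-1/2)+1/t>1/\kappa_2$. For example, $s-1/2=t=1$ yields $n^{-1/2}$ instead of $n^{-2/3}$. This is not an artefact of the analysis. The variance of this estimator really is of order $\sigma^2NK/n$, and already $\mathcal H^{s,t}_{\mathrm{lin}}$ contains maps whose bias is of order $N^{-(s-1/2)}$, resp.\ $K^{-t}$.

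In $L^\infty$ you balance $N^{3/2}\sqrt{K/n}$ against $N^{-\kappa_\infty}$ as if $K$ stayed bounded. When $s-1\le t$, however, the output truncation forces $K\gtrsim N^{(s-1)/t}$. Your bound then only yields $n^{-(s-1)/(2(s-1)+3+(s-1)/t)}$, which is strictly slower than claimed.

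The missing idea is to let the polynomial index set depend on the output index. Theorem~\ref{theorem: coefficient bounds} gives $|g_{\nu,j}|\lesssim a_\nu j^{-t'}$. The downward closed sets $\Lambda_{N,j}$ with $\sum_j|\Lambda_{N,j}|\le N$ from Theorem~\ref{theorem: operator approximation rate} are a hyperbolic-cross type selection on these doubly indexed coefficients. With only $N$ coefficients in total they reach bias $N^{-\kappa_2+\delta}$ in $L^2$ and $N^{-\kappa_\infty+\delta}$ in $L^\infty$. The variance is then $\lesssim\sigma^2N/n$. Chkifa's bound $\sum_{\nu\in\Lambda}w_\nu\le|\Lambda|^2$ for downward closed $\Lambda$ gives sup-norm noise $\lesssim\sigma n^{-1/2}\sum_j|\Lambda^\infty_{N,j}|^{3/2}\le\sigma n^{-1/2}N^{3/2}$. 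This is exactly Theorem~\ref{theorem: statistical upper bounds operator}.

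Your fallback rescues the $L^2$ upper bound but not the $L^\infty$ one. That fallback is citing \parencite{reinhardt2026statistical}, which is what the paper does in random design, or using an entropy-based estimator.

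\textbf{Lower bounds.} The paper takes packings from Propositions~\ref{proposition: L2 metric entropy LB} and~\ref{proposition: Linfty metric entropy LB}. It then bounds the mutual information globally via Yang--Barron, which requires the $L^2$ entropy \emph{upper} bound of Proposition~\ref{proposition: L2 metric entropy UB} (Lemma~\ref{lemma: minimax metric entropy}). That entropy upper bound is where the $\tau$-loss enters.

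Your local Fano argument with $n$-dependent amplitude instead bounds the KL divergence directly by $nm\delta^2/\sigma^2$. It needs no entropy upper bound.

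Membership, which you flag as the main obstacle, is immediate from Definition~\ref{def: holomorphic operator}. With $R=\sup_{z\in\mathcal D_{\C}}\|z\|_{\mathcal X_{\ell^1}}$ one has $\|G_\theta(z)\|_{\mathcal Y^t}\lesssim\delta m^{s}R$ for the input block and $\lesssim\delta m^{t+1/2}R$ for the output block. Hence your $L^2$ lower bound holds even without the $\tau$.

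Your $L^\infty$ case analysis, however, omits the regime $s-1<t<s-1/2$, where $\kappa_2=t$ but $\kappa_\infty=s-1$. Reusing the $L^2$ (output-block) packing there gives only $n^{-t/(2t+1)}$, which is below the claimed $n^{-(s-1)/(2t+1)}$. Use the input-block packing instead. It lies in $\mathcal H^{s,t}_{\mathrm{lin}}$ for every $t$, and with $m\asymp n^{1/(2s)}$ it gives $n^{-(s-1)/(2s)}\ge n^{-(s-1)/(2t+1)}$ in that regime. The paper's global argument handles all regimes at once via $\alpha=1/\kappa_2+\delta$ and $\beta=1/\kappa_\infty$ in Lemma~\ref{lemma: minimax metric entropy}.
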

For more details, consult Section \ref{section: lower bounds} and \ref{section: upper bounds}.

As an illustrative example, we consider the parameter-to-solution map of the elliptic Darcy flow equation, see Section \ref{section: examples} for details. It is well-known that the solution depends holomorphically on the conductivity coefficient \parencite{cohen_convergence_2010, cohen_analytic_2011, herrmann_neural_2024}. We determine the corresponding regularity parameter $s>1$ for the input, and the output regularity parameter $t>0$, which determine the $L^2$ and $L^\infty$-rates. Combined with Theorem \ref{theorem: main minimax}, this shows that the $L^2$-rate is minimax optimal over the associated class of holomorphic operators $\mathcal H^{s,t}$. Furthermore, we prove minimax lower bounds for \textit{quantities of interest} (QoIs), specifically for real-valued, linear functionals of the solution.
Note that the algebraic rates obtained in the present infinite-dimensional setting contrast classical results for finite-dimensional nonparametric regression of holomorphic functions \cite{ibragimov_estimation_1998}.

\subsection{Related Literature}

\paragraph{Approximation Theory for Operator Learning and Parametric Partial Differential Equations}
Recent successful applications of deep learning methods in the sciences have motivated the development of neural network architectures which are able to approximate solutions of partial differential equations (PDEs). Popular examples include for example the Deep Ritz method \parencite{yu2018deep}, the Deep Galerkin method \parencite{sirignano2018dgm} and physics-informed neural networks \parencite{raissi2019physics}. While these methods approximate individual solutions of a PDE, \textit{operator learning} seeks to approximate maps between infinite-dimensional spaces, with a particular focus on parameter-to-solution maps of parametric PDEs. Neural-network based surrogates are commonly referred to as \textit{neural operators}\parencite{kovachki2023neural}, with prominent examples including Fourier neural operators (FNOs) \parencite{li2020fourier}, Graph Neural Operators \parencite{li2020multipole}, DeepONet \parencite{lu2021learning} and PCANet \parencite{bhattacharya2021model, hesthaven2018non}. Universal approximation results have been established for several such neural operator architectures \parencite{deng2021convergence, lanthaler2022error, kovachki2021universal, kovachki2023neural}. 

For broad classes of Lipschitz or finitely Fr\'echet differentiable operators, quantitative approximation rates suffer from the \textit{curse of parametric complexity} \parencite{lanthaler2023operator, lanthaler_parametric_2025}. Finding classes of operators for which this curse can be circumvented, remains a central challenge in Operator Learning. One possible solution to this problem arises from the approximation theory of parametric PDEs. For broad classes of operator equations, the parameter-to-solution maps admit holomorphic extensions with respect to the countably many expansion coefficients of the parameter. Examples include linear and nonlinear elliptic equations \parencite{cohen_convergence_2010, cohen_analytic_2011, chkifa_breaking_2015}, parabolic equations \parencite{hoang2013sparse}, and PDEs on parametrized domains \parencite{jerez2017electromagnetic, cohen2018shape, hiptmair2018large, henriquez2021shape}. The holomorphic dependence of the solution on the parameters implies the sparsity of the corresponding generalized polynomial chaos expansions and dimension-independent $N$-term approximation rates. Building on this approximation theory, \textcite{schwab_deep_2019} studied neural network approximation of holomorphic maps and \textcite{herrmann_neural_2024} established a unified framework for neural and spectral operator surrogates with algebraic expression-rate bounds with respect to the $L^2$ and $L^\infty$- norm. Related approximation results on Gaussian spaces are based on Wiener--Hermite polynomials, with holomorphic extensions to complex strips \parencite{dung_analyticity_2023, marcati2026sparsity, marcati2026neural}.

\paragraph{Statistical Learning Theory in Operator Learning}

In the \textit{noise-free} setting, several works have studied the sample complexity of recovering operators from finitely many exact observations. Going back to earlier work by \textcite{mhaskar1997neural} on the nonlinear n-width of Lipschitz operators with respect to $L^2$, these results have been generalized to general classes of Lipschitz or finite-regularity operators,\parencite{kovachki_data_2024, adcock2024sample}. Therein, the authors show the prevalence of a \textit{curse of sample complexity} for these classes, which states that the reconstruction of such operators with algebraic sample complexity is in general impossible. 

The statistical learning theory for operator learning with noisy observations has received less attention. For the estimation of linear operator from noisy samples, statistical errors have been studied in \parencite{de2023convergence} and lower bounds were studied in \parencite{chagny2025adaptive}. For nonparametric regression of Lipschitz operators from noisy data, \textcite{liu2024deep} were able to prove generalization bounds for neural-network based estimators. For Banach space valued holomorphic operators and noisy observations,  \textcite{adcock_optimal_2024, adcock2025near} derive upper and lower bounds for the approximation in $L^2$.

Results on minimax lower bounds for the nonparametric estimation of nonlinear operators have been sparser.  \textcite{adcock2025towards} derived statistical lower bounds for Lipschitz operators in fixed- and random-design observation model, which were however only tight for exponentially decaying eigenvalues of the design measure. In particular, they show that algebraic rates cannot be achieved for such classes of operators. By contrast, \textcite{reinhardt2026statistical} established algebraic, statistical upper bounds for empirical risk minimizers over classes of so-called \textit{FrameNets} if the operator is holomorphic. 

A recent survey by \textcite{brugiapaglia2026short} highlighted the open question of sharp lower bounds for operator learning under statistical observation noise and ask whether the rates by \textcite{reinhardt2026statistical} are minimax-optimal. Our results address this open question both in the random-design white noise model, with our lower bounds matching the upper bounds up to an arbitrarily small loss in the exponent. Furthermore, we establish the same minimax rates in the generalized Gaussian white noise model, which are attained by certain projection estimators based on the generalized polynomial chaos expansion.

\subsection{Notation}

Throughout the text, set $\N = \{1,2,3,  \dots\}$ and $\N_0 = \N \cup \{0\}$ and let $\R$ and $\C$ denote the real numbers and the complex numbers respectively. For a family of parametrized probability distributions $\{\mathbb P_\theta: \theta \in \Theta\}$ indexed by some parameter set $\Theta$, we denote the expectation under $\mathbb P_\theta$ by $\mathbb E_\theta$. For two real sequences $(a_n)_{n\in \N}$ and $(b_n)_{n\in \N}$, we write $a_n \lesssim b_n$ (respectively $a_n \gtrsim b_n$), if there exists a constant $C>0$ independent of $n$, such that $a_n \leq Cb_n$ (respectively $a_n \geq Cb_n$) for all $n\in \N$. If both $a_n \lesssim b_n$ and $a_n \gtrsim b_n$, we write $a_n \simeq b_n$. 

For a pseudo-metric space $(S,d)$ and $T\subset S$ and $\varepsilon>0$, we denote the $\varepsilon$-covering number of $T$ by
\begin{equation}\label{eq:covering}
  N(\varepsilon,T,d):=\inf\Big\{n\in\N\,:\,T\subseteq \bigcup_{i=1}^n \overline B_{\varepsilon}(x_i)\Big\},
\end{equation}
where $\overline B_\varepsilon(x_i):=\{x\in S\,:\,d(x_i,x)\leq\varepsilon\}$ and $x_i\in S$ for all $i$.
Similarly, $M(\varepsilon,T,d)$ denotes the $\varepsilon$-packing number, i.e. the maximal cardinality of a subset of $T$ whose distinct elements have pairwise distance greater than $\varepsilon$.
For any $\varepsilon>0$, the packing and covering numbers satisfy the elementary inequalities
\begin{equation*}
    M(2\varepsilon, T, d)\leq N(\varepsilon, T, d) \leq M(\varepsilon, T, d).
\end{equation*}

The input and output space are denoted by $\mathcal X$ and $\mathcal Y$ respectively, and are assumed to be separable Hilbert spaces with inner products $\langle\cdot,\cdot\rangle_{\mathcal X}$ and $\langle\cdot,\cdot\rangle_{\mathcal Y}$. For $1\leq p < \infty$ and $\gamma$ some probability measure on $\mathcal X$, $L^p(\mathcal X, \mathcal Y, \gamma)$ denotes the Bochner-space of $p$-integrable maps
\begin{align*}
    L^p(\mathcal X, \mathcal Y, \gamma) = \left\{G: \mathcal X\to \mathcal Y \text{ Bochner-measurable}\,:\, \|G\|_{L^p(\mathcal X, \mathcal Y, \gamma)}\coloneqq\left(\int_{\mathcal X} \|G(x)\|_{\mathcal Y}^p \;d\gamma(x)\right)^{1/p}< \infty\right\}.
\end{align*}
For a subset $D\subset \mathcal X$ and $p= \infty$, we define 
\begin{align*}
    L^\infty(D, \mathcal Y) = \left\{G:  D\to \mathcal Y: \|G\|_{\infty, D}\coloneqq \sup_{x\in D}\|G(x)\|_{\mathcal Y} < \infty\right\}.
\end{align*}
If $D$ and $\mathcal X$ are clear from the context, we shorten the notation of the corresponding norms to $\|\cdot\|_{L^p(\gamma, \mathcal Y)}$ and $\|\cdot\|_{\infty, \mathcal Y}$. Since we will consider classes of functionals as well, we write $\|\cdot\|_{L^p(\gamma)}$ and $\|\cdot\|_{\infty}$ if $\mathcal Y = \R$.
The space of real-valued, $p$-summable sequences is denoted by $\ell^p(\N)$. Lastly, the complexification of a real Banach space $(B, \|\cdot\|_B)$ is denoted by $B_{\C}$ \parencite{munoz1999complexifications}.

\section{A Class of Holomorphic Maps Between Hilbert Spaces}\label{section: holomorphy class}

\subsection{Definition of $\mathcal H^{s,t}$}\label{subsection: definition of holomorphy class}
For $\varrho >1$, let 
\begin{align*}
    \mathcal E_\varrho = \left\{\frac{z+ z^{-1}}{2}: z \in \mathbb C,\ 1\leq |z|\leq \varrho\right\}\subset \mathbb C
\end{align*}
denote the (closed) Bernstein ellipse with parameter $\varrho$.
For a sequence $\boldsymbol\varrho \in (1, \infty)^{\N}$, we write
\begin{align*}
    \mathcal E^\infty_{\boldsymbol\varrho} = \bigtimes_{j=1}^\infty \mathcal E_{\varrho_j}
\end{align*}
for the corresponding tensorized Bernstein poly-ellipse. 

Fix Riesz bases $\Psi = (\psi_j)_{j\in \N}$ of $\mathcal X$ and $\Phi = (\phi_j)_{j\in \N}$ of $\mathcal Y$, with the dual (biorthogonal) bases $\widetilde \Psi = (\widetilde \psi_j)_{j\in \N}$ and $\widetilde \Phi = (\widetilde \phi_j)_{j\in \N}$, respectively. 

Throughout this paper, let $\mathbf{w} = (w_j)_{j\in \N}$ satisfy $w_j \simeq 1/j$. For $s>1$, define
\begin{align}\label{eq: input expansion}
    \sigma_s: Q \to \mathcal X, \quad \sigma_s(c) = \sum_{j=1}^\infty c_j j^{-s}\psi_j, \qquad Q= [-1,1]^{\N}.
\end{align}
This parametrizes the input through a weighted expansion with respect to the Riesz basis $\Psi = (\psi_j)_{j\in \N}$ and by controlling the decay of the corresponding coefficients. 

For all $s, t\geq 0$, define the scales of Hilbert spaces 
\begin{align}\label{def: scale spaces}
    \mathcal X^s &\coloneqq \{x\in \mathcal X: \lVert x\rVert_{\mathcal X^s}^2\coloneqq \sum_{j=1}^\infty |\langle x, \widetilde \psi_j\rangle_{\mathcal X}|^2 j^{2s} < \infty\}\quad \text{and}\nonumber\\
    \mathcal Y^t &\coloneqq \{y\in \mathcal Y: \lVert y\rVert_{\mathcal Y^t}^2\coloneqq \sum_{j=1}^\infty |\langle y, \widetilde \phi_j\rangle_{\mathcal Y}|^2 j^{2t} < \infty\},
\end{align}
which are separable Hilbert spaces. We denote their complexifications by $\mathcal X^s_{\C}$ and $\mathcal Y^{t}_{\C}$ respectively. We also define 
\begin{align}\label{eq: abs summable}
    \mathcal X_{\ell^1} = \{x\in \mathcal X: \|x\|_{\mathcal X_{\ell^1}} \coloneqq \sum_{j=1}^\infty |\langle x, \widetilde \psi_j\rangle| < \infty\}.
\end{align}
and denote its complexification by $\mathcal X_{\ell^1, \C}$.

By definition, $\mathcal X^s$ and $\mathcal Y^t$ are isomorphic to the weighted sequence spaces
\begin{align*}
    \ell^2_s(\N) &= \left\{c\in \ell^2(\N): \|c\|_{\ell^2_s}^2 \coloneqq\sum_{j=1}^\infty |c_j|^2 j^{2s}< \infty\right\}\quad \text{and}\\
    \ell^2_t(\N) &= \left\{c\in \ell^2(\N): \|c\|_{\ell^2_t}^2 \coloneqq\sum_{j=1}^\infty |c_j|^2 j^{2t}< \infty\right\}.
\end{align*}

For any $r\geq 0$ and $1\leq q < \infty$, define
\begin{align*}
    \ell^q_{-r}(\N) &= \{c = (c_j)_{j\in \N}\in \R^{\N}: \|c\|_{\ell^q_{-r}}^q \coloneqq\sum_{j=1}^\infty |c_j|^q j^{-rq}< \infty\}.
\end{align*}
and we denote its complexification by $\ell^q_{-r}(\N, \C)$ and abbreviate it to $\ell^q_{-r}$, if it is clear from the context.

For $s>1$, 
\begin{align*}
    \sup_{c\in Q}\|c\|_{\ell^1_{-s}} \leq \sum_{j=1}^\infty j^{-s}<\infty,
\end{align*}
and therefore $Q \subset \ell^1_{-s}(\N)$.

To allow for parameterizations $\sigma_s$ beyond $Q$, we extend it to the weighted sequence space $\ell^1_{-s}(\N)$. The proof of the following lemma can be found in Appendix~\ref{appendix: proof of weighted seq space}.

\begin{lemma}\label{lemma: weighted seq space}
    Let $s>1$. The map $\widetilde \sigma_s: \ell^1_{-s}(\N)\to \mathcal X_{\ell^1}$ with $\widetilde\sigma_s(c) \coloneqq \sum_{j=1}^\infty c_j j^{-s} \psi_j$, is a linear bijection, and satisfies 
    \begin{align*}
        \|\widetilde\sigma_s(c)\|_{\mathcal X_{\ell^1}}=\|c\|_{\ell^1_{-s}},\qquad c\in\ell^1_{-s}(\N).
    \end{align*}
    Moreover, 
    \begin{align*}
        \|\widetilde\sigma_s(c)\|_{\mathcal X}\leq D_\Psi\|c\|_{\ell^1_{-s}}
    \end{align*}
    and $\widetilde \sigma_s$ is a continuous extension of $\sigma_s: Q \to \mathcal X$. Analogous statements hold for the corresponding complexified spaces.
\end{lemma}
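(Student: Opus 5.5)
The plan is to reduce everything to the biorthogonality of the Riesz basis $\Psi$ and its dual $\widetilde\Psi$, together with the standard Riesz-basis upper bound $\|\psi_j\|_{\mathcal X}\le D_\Psi$. I take $D_\Psi$ to be the paper's constant with $\sup_j\|\psi_j\|_{\mathcal X}\le D_\Psi$. Such a constant exists for any Riesz basis, since the $\psi_j$ are images of an orthonormal basis under a bounded isomorphism.

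\textbf{Well-definedness and the bound in $\mathcal X$.} For $c\in\ell^1_{-s}(\N)$ the series $\sum_j c_j j^{-s}\psi_j$ converges absolutely in $\mathcal X$, because
\begin{align*}
\sum_j |c_j| j^{-s}\|\psi_j\|_{\mathcal X}\le D_\Psi\|c\|_{\ell^1_{-s}}.
\end{align*}
Hence $\widetilde\sigma_s(c)\in\mathcal X$ is well defined, and $\|\widetilde\sigma_s(c)\|_{\mathcal X}\le D_\Psi\|c\|_{\ell^1_{-s}}$. Linearity is immediate from absolute convergence.

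\textbf{Isometry and bijectivity.} Since the series converges in $\mathcal X$ and $\langle\cdot,\widetilde\psi_k\rangle_{\mathcal X}$ is continuous, biorthogonality gives $\langle\widetilde\sigma_s(c),\widetilde\psi_k\rangle=c_k k^{-s}$. Therefore
\begin{align*}
\|\widetilde\sigma_s(c)\|_{\mathcal X_{\ell^1}}=\sum_k |c_k|k^{-s}=\|c\|_{\ell^1_{-s}},
\end{align*}
so $\widetilde\sigma_s$ maps into $\mathcal X_{\ell^1}$ isometrically and is in particular injective. For surjectivity, take $x\in\mathcal X_{\ell^1}$ and set $c_k:=k^{s}\langle x,\widetilde\psi_k\rangle$. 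Then $\|c\|_{\ell^1_{-s}}=\|x\|_{\mathcal X_{\ell^1}}<\infty$, and
\begin{align*}
\widetilde\sigma_s(c)=\sum_k\langle x,\widetilde\psi_k\rangle\psi_k=x
\end{align*}
by the Riesz basis expansion of $x$.

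\textbf{Extension and continuity.} On $Q\subset\ell^1_{-s}$ the defining formulas of $\sigma_s$ and $\widetilde\sigma_s$ coincide, so $\widetilde\sigma_s$ extends $\sigma_s$. Continuity follows from linearity and the bound in $\mathcal X$ (equivalently, from the isometry into $\mathcal X_{\ell^1}$).

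\textbf{Complexified case.} The same argument applies verbatim. The complexified basis $\psi_j$ still satisfies biorthogonality with $\widetilde\psi_j$ under the sesquilinear extension, and the bound $\|\psi_j\|_{\mathcal X_\C}\le D_\Psi$ persists. I only need to check that the complexification norm on $\mathcal X_\C$ is compatible with the Riesz constant; for the Hilbert complexification it is, possibly up to a harmless constant factor. I expect this last check to be the only point requiring care.
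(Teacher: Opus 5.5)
Your proposal is correct and follows the paper's proof essentially step for step: the triangle inequality with $\|\psi_j\|_{\mathcal X}\le D_\Psi$ for the $\mathcal X$-bound, biorthogonality for the isometry, and the choice $c_k=k^{s}\langle x,\widetilde\psi_k\rangle_{\mathcal X}$ together with the Riesz expansion of $x$ for surjectivity. The check you flag for the complexification is harmless and needs no constant: the Hilbert complexification norm restricts to the original norm on real elements, so $\|\psi_j\|_{\mathcal X_\C}\le D_\Psi$ holds exactly.
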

By abuse of notation, we again denote this extension by $\sigma_s$. 
We consider two natural classes of holomorphic maps. The first class consists of \textit{holomorphic operators}, following \parencite[Assumption 2]{reinhardt2026statistical}, for which both the input and output spaces are infinite-dimensional Hilbert spaces. The second class consists of scalar-valued \textit{holomorphic functionals} for which only the input space is infinite-dimensional. 

Consider two complex Hilbert spaces $\mathcal X_{\C}$, $\mathcal Y_{\C}$, and an open set $\mathcal O_{\C}\subset \mathcal X_{\C}$. We say that a map $G:\mathcal O_{\C}\to \mathcal Y_{\C}$ is holomorphic, if it is complex Fr\'echet differentiable; 
see, e.g., \parencite[Theorem 13.16]{mujica1986complex} for a discussion of equivalent definitions.

The next definition introduces the set of holomorphic operators that we investigate in the following. It is similar, but slightly less restrictive (i.e.\ larger) than the classes considered in \cite{zech_sparse-grid_2018,herrmann_neural_2024,reinhardt2026statistical}.
In particular, it is essentially a version of the ``$(b,\eps)$-holomorphy'' assumption \cite{chkifa_breaking_2015}. Specifically,
contrary to \cite{herrmann_neural_2024,reinhardt2026statistical} we assume boundedness not on the set $\mathcal{O}_{\C}$, but only on the smaller set $\mathcal{D}_{\C}$ occurring in the following definition:

\begin{definition}[Class of holomorphic operators]\label{def: holomorphic operator}
  Let $s>1$, $t>0$ and $L>0$. Let $\mathcal X_{\C}$, $\mathcal Y_{\C}$ be complex Hilbert spaces and $Q \coloneqq [-1,1]^{\N}$. 
  Fix a Riesz basis $\Psi = (\psi_j)_{j\in \N}\subset \mathcal X$ and
  let $\sigma_s$ be as in \eqref{eq: input expansion}.
  Let $\sigma_s(Q)\subset \mathcal D_{\C}\subset \mathcal O_{\C}\subset \mathcal X_{\C}$ be nested subsets of $\mathcal X_\C$ containing $\sigma_s(Q)$, such that $\mathcal O_{\C}$ is open in $\mathcal X_\C$, and $\mathcal D_{\C} \subset (\mathcal X_{\ell^1})_{\C}$ is open and bounded in the topology of $(\mathcal X_{\ell^1})_{\C}$. Then we denote by $\mathcal H^{s,t}(\mathcal O_{\C}, \mathcal D_{\C}, \mathcal Y_{\C}, L)$ 
    the set of all operators $G: \sigma_s(Q) \to \mathcal Y$ with extensions to $G: \mathcal O_{\C}\to \mathcal Y_{\C}$ such that 
    \begin{enumerate}
        \item $G$ is holomorphic on $\mathcal O_\C$ and
        \item $\sup_{z\in \mathcal D_{\C}}\|G(z)\|_{\mathcal Y_{\C}^t} \leq  L<\infty$.\label{item: uniform boundedness}
    \end{enumerate}
    If $\mathcal O_{\C}$, $\mathcal D_{\C}$ and $\mathcal X_{\C}$ are fixed and clear from the context, we denote this set by $\mathcal H^{s,t}(L; \mathcal Y_{\C})$.
\end{definition}

\begin{remark}
    The proofs in \cite{herrmann_neural_2024,reinhardt2026statistical} only use boundedness on $\mathcal{D}_{\C}$ as required in Definition
    \ref{def: holomorphic operator}, rather than on
    $\mathcal{O}_{\C}$ as assumed \cite{herrmann_neural_2024,reinhardt2026statistical}. Hence
    the upper bounds derived in these papers carry over straightforward to this weaker assumption. In the present paper, where we wish to derive lower bounds matching the upper bounds, it is crucial to work with a possibly minimal assumption, which explains the introduction of $\mathcal{D}_{\C}$. Showing optimal rates for the class assuming boundedness on $\mathcal{O}_{\C}$ constitutes an open question.
\end{remark}
  
If $\mathcal Y_{\C}$ is finite-dimensional, all norms are equivalent. In particular, the $\mathcal Y_{\C}^t$-norm is equivalent to the $\mathcal Y_{\C}$-norm. Therefore, we can replace the uniform boundedness condition in Definition \ref{def: holomorphic operator} simply by the uniform boundedness with respect to the $\mathcal Y_{\C}$-norm, formally corresponding to the case $t=0$.
\begin{definition}[Holomorphic functionals]\label{definition: holomorphic functional}
    Let $s,L,\mathcal X_\C$, $Q$, $\sigma_s$, $\mathcal O_{\C}$ and $\mathcal D_{\C}$ be as in Definition \ref{def: holomorphic operator}. We denote the set of  functionals $g: \sigma_s(Q)\to \R$ with holomorphic extensions $g: \mathcal O_\C\to \C$ such that $\sup_{z\in \mathcal D_{\C}}|g(z)| \leq  L<\infty$ by $\mathcal H^s(\mathcal O_{\C}, \mathcal D_{\C}, \C, L)$. If $\mathcal O_{\C}$ and $\mathcal D_{\C}$ are clear from the context, we write $\mathcal H^{s}(L; \C)$.
\end{definition}

\begin{remark}\label{rem: compactness}
    By Lemma \ref{lemma: weighted seq space}, the map $\sigma_s: \ell^1_{-s}(\N)_{\C}\to \mathcal X_{\ell^1, \C}$ is continuous and thus $\sigma_s^{-1}(\mathcal D_{\C})$ is an open and bounded subset of $\ell^1_{-s}(\N)_{\C}$. Furthermore, $Q$ is compact in the topology of $\ell^1_{-s}$. 
    Since $Q\subset \sigma_s^{-1}(\mathcal D_{\C})$, it holds that 
    \begin{align*}
        r_*\coloneqq \operatorname{dist}
    (Q, \partial \sigma_s^{-1}(\mathcal D_{\C}))>0
    \end{align*}
    with the distance measured with respect to $\ell^1_{-s}$.
    As a consequence, for every $r\in (0, r_*)$, 
    \begin{align*}
        \{z\in\ell^1_{-s}(\N)_{\C}:
    \operatorname{dist}(z,Q)<r\}
    \subset\sigma_s^{-1}(\mathcal D_{\C})\subset \sigma_s^{-1}(\mathcal O_{\C}).
    \end{align*}
    To consider analytic extension domains in the sequence space $\C^{\N}$, one needs to ensure that the corresponding tensorized Bernstein ellipses $\mathcal E_\varrho^\infty$ do not become too large. 
    For every $z\in \mathcal E_{\boldsymbol{\varrho}}^\infty$, one can choose a $c\in Q$ such that 
    \begin{align*}
        |z_j-c_j|\leq \varrho_j -1, \qquad j\in \N.
    \end{align*}
    This follows for example from \parencite[Lemma B.2.4.]{zech_sparse-grid_2018}. Thus, for every
    $z\in \mathcal E_{\boldsymbol{\varrho}}^\infty$, 
    \begin{align*}
        \operatorname{dist}(z, Q) \leq \sum_{j=1}^\infty (\rho_j-1)j^{-s}.
    \end{align*}
    It is thus sufficient to impose the admissibility condition
    \begin{align*}
        \sum_{j=1}^\infty (\varrho_j-1)j^{-s}< r
    \end{align*}
    for some $r\in (0, r_*)$ to ensure that $\mathcal E_{\boldsymbol{\varrho}}^\infty\subset \sigma_s^{-1}(\mathcal D_{\C})$. This leads to the condition in Lemma \ref{lemma: (b,r)-holomorphy}, commonly known as the $(b,r)$-admissibility condition \parencite{cohen_analytic_2011} with $b_j=  j^{-s}$.
\end{remark}

The following lemma is a special case of \parencite[Lemma 2.2.7]{zech_sparse-grid_2018} with $b_j = j^{-s}$, adapted to the current problem setting. 

\begin{lemma}\label{lemma: (b,r)-holomorphy}
    Let $s>1$, $t>0$ and $L>0$. Let $\mathcal X_{\C}$, $\mathcal Y_{\C}$ be complex separable Hilbert spaces, let $\mathcal O_{\C}\subset \mathcal X_{\C}$ be open and let $G\in \mathcal H^{s,t}(L; \mathcal Y_{\C})$. Define $F = G \circ \sigma_s:Q\to \mathcal Y$. Then, there exists an $r>0$, depending only on $\mathcal D_{\C}$ and $\sigma_s$, independent of $G$, such that the following statements hold:
    
    \begin{enumerate}
        \item $F$ is continuous on $Q$. \label{item: continuous}
        \item For every \textit{admissible} sequence $\varrho= (\varrho_{j})_{j\in \N}\in (1, \infty)^{\N}$ satisfying
        \begin{align}\label{eq: admissible ellipse}
            \sum_{j=1}^{\infty} j^{-s} (\varrho_j-1)< r,
        \end{align}
        the map $F$ admits a separately analytic\footnote{i.e. analytic for any finite selection $(z_i)_{i\in I}$ of variables with $I \subset \N$ and $|I|<\infty$} extension to $\mathcal E^\infty_{\varrho} = \bigtimes_{j\in \N} \mathcal E_{\varrho_j}$.\label{item: (b,r)-admissibility}
        \item The extension $F: S_{s,r}\to \mathcal Y_{\C}$ with
        \begin{align}\label{eq: extension domain}
            S_{s,r} = \bigcup_{\{\varrho: \varrho \text{ is } \text{admissible}\}} \mathcal E^\infty_\varrho
        \end{align}
        is well-defined and satisfies $\sup_{z\in S_{s,r}}\|F(z)\|_{\mathcal Y^t_{\C}}\leq  L$. \label{item: uniform bound}
    \end{enumerate}
\end{lemma}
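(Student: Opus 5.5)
The plan is to read all three statements off Remark~\ref{rem: compactness}, with $r \coloneqq r_*/2$, where $r_* = \operatorname{dist}(Q,\partial\sigma_s^{-1}(\mathcal D_{\C}))>0$ in $\ell^1_{-s}$. This $r$ depends only on $\mathcal D_{\C}$ and $\sigma_s$, not on $G$. Throughout, $F$ denotes the composition $G\circ\sigma_s$, where $\sigma_s$ is the extension $\widetilde\sigma_s:\ell^1_{-s}(\N)_{\C}\to\mathcal X_{\ell^1,\C}$ from Lemma~\ref{lemma: weighted seq space}.

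\emph{Step 1: continuity on $Q$.} Lemma~\ref{lemma: weighted seq space} gives $\|\sigma_s(c)\|_{\mathcal X}\le D_\Psi\|c\|_{\ell^1_{-s}}$. Hence $\sigma_s:Q\to\mathcal X_{\C}$ is Lipschitz from the $\ell^1_{-s}$-topology into $\mathcal X_{\C}$. On $Q$, the $\ell^1_{-s}$-topology coincides with the product topology, since $\sum_j j^{-s}<\infty$ controls the tails uniformly. The map $G$ is holomorphic, hence continuous, on the open set $\mathcal O_{\C}\supset\sigma_s(Q)$. Therefore $F=G\circ\sigma_s$ is continuous on $Q$, which is \ref{item: continuous}.

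\emph{Step 2: extension to admissible poly-ellipses.} Let $\boldsymbol\varrho$ satisfy \eqref{eq: admissible ellipse}. By Remark~\ref{rem: compactness}, every $z\in\mathcal E^\infty_{\boldsymbol\varrho}$ has $\operatorname{dist}(z,Q)\le\sum_j(\varrho_j-1)j^{-s}<r<r_*$. Hence $\mathcal E^\infty_{\boldsymbol\varrho}\subset\sigma_s^{-1}(\mathcal D_{\C})\subset\sigma_s^{-1}(\mathcal O_{\C})$, and $F(z)=G(\sigma_s(z))$ is well defined there.

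For separate analyticity, fix a finite set $I\subset\N$ and freeze the remaining coordinates. The map $(z_i)_{i\in I}\mapsto\sigma_s(z)=\sigma_s(z_{\text{frozen}})+\sum_{i\in I}z_i i^{-s}\psi_i$ is affine and continuous from $\C^{I}$ into $\mathcal X_{\C}$, hence holomorphic. Composing with the holomorphic map $G$ on $\mathcal O_{\C}$ gives holomorphy, by the chain rule for complex Fréchet derivatives. This holds on an open neighbourhood in $\C^I$ of the finite-dimensional slice of $\mathcal E^\infty_{\boldsymbol\varrho}$. Such a neighbourhood exists because the slice is compact and contained in the open set $\sigma_s^{-1}(\mathcal O_{\C})$. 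This gives \ref{item: (b,r)-admissibility}.

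\emph{Step 3: the uniform bound.} On the union $S_{s,r}$ the definition $F=G\circ\sigma_s$ does not depend on the choice of $\boldsymbol\varrho$, so the extension is well defined. Step 2 shows that $S_{s,r}\subset\sigma_s^{-1}(\mathcal D_{\C})$. Then item~\ref{item: uniform boundedness} of Definition~\ref{def: holomorphic operator} yields $\sup_{z\in S_{s,r}}\|F(z)\|_{\mathcal Y^t_{\C}}\le\sup_{x\in\mathcal D_{\C}}\|G(x)\|_{\mathcal Y^t_{\C}}\le L$, which is \ref{item: uniform bound}.

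The main obstacle I expect is the containment $\mathcal E^\infty_{\boldsymbol\varrho}\subset\sigma_s^{-1}(\mathcal D_{\C})$. It relies on two facts. The first is positivity of $r_*$, which uses compactness of $Q$ in $\ell^1_{-s}$ and openness of $\sigma_s^{-1}(\mathcal D_{\C})$ via continuity of $\sigma_s$. The second is the coordinatewise estimate $|z_j-c_j|\le\varrho_j-1$ from \parencite[Lemma B.2.4.]{zech_sparse-grid_2018}. I would take both as given by Remark~\ref{rem: compactness}.

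One further point must be checked: that $z\in\mathcal E^\infty_{\boldsymbol\varrho}$ actually lies in $\ell^1_{-s}(\N)_{\C}$. This holds because $|z_j|\le\varrho_j$, so $\sum_j|z_j|j^{-s}\le\sum_j j^{-s}+\sum_j(\varrho_j-1)j^{-s}<\infty$.
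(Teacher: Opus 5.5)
Your proposal is correct and follows the paper's route. The paper gives no separate proof: it relies on Remark~\ref{rem: compactness}, where positivity of $r_*$ and the coordinatewise estimate $|z_j-c_j|\le\varrho_j-1$ give $\mathcal E^\infty_{\boldsymbol\varrho}\subset\sigma_s^{-1}(\mathcal D_{\C})$, and on the citation of \parencite[Lemma 2.2.7]{zech_sparse-grid_2018}. You have made the remaining steps explicit: continuity, separate holomorphy by composing $G$ with an affine map $\C^I\to\mathcal X_{\C}$, and the bound inherited from $\mathcal D_{\C}$. One small simplification: compactness of the slice is not needed, since the preimage of $\mathcal O_{\C}$ under that continuous affine map is already an open neighbourhood of the slice.
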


\begin{proposition}\label{proposition: extension to infdim ellipse}
    Let $s>1$ and let $S_{s,r}$ be defined as in \eqref{eq: extension domain}. For every $r>0$, the set $S_{s,r}$ is an open and bounded subset of $\ell^1_{-s}(\N)_{\C}$. Moreover, there exist $r', \widetilde r>0$ such that $r'<\widetilde r$ and
    \begin{align*}
        Q \subset S_{s,r'}\subset \sigma_s^{-1}(\mathcal D_{\C}) \subset S_{s,r"}
    \end{align*}
\end{proposition}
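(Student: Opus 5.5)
Throughout, write $d(z,Q)\coloneqq\inf_{c\in Q}\|z-c\|_{\ell^1_{-s}}$, and let $U_r\coloneqq\{z\in\ell^1_{-s}(\N)_\C: d(z,Q)<r\}$ denote the open $r$-neighbourhood of $Q$. The plan is to compare $S_{s,r}$ with $U_r$ from both sides. The aim is to sandwich
\begin{align*}
    U_{c_1 r}\subset S_{s,r}\subset U_{c_2 r}
\end{align*}
for absolute constants $0<c_1\le c_2$. The statement then follows from Remark \ref{rem: compactness}, which gives $U_{r}\subset\sigma_s^{-1}(\mathcal D_\C)$ for $r<r_*$, together with the boundedness of $\sigma_s^{-1}(\mathcal D_\C)$ in $\ell^1_{-s}$.

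\emph{Upper inclusion, boundedness and the first nesting.} By Remark \ref{rem: compactness} (via \parencite[Lemma B.2.4]{zech_sparse-grid_2018}), every $z\in\mathcal E^\infty_{\boldsymbol\varrho}$ with admissible $\boldsymbol\varrho$ admits $c\in Q$ with $|z_j-c_j|\le\varrho_j-1$. Hence $d(z,Q)\le\sum_j(\varrho_j-1)j^{-s}<r$, so $S_{s,r}\subset U_r$. Since $Q$ is bounded in $\ell^1_{-s}$ (because $s>1$), $S_{s,r}$ is bounded. Choosing $r'<r_*$ gives $Q\subset S_{s,r'}\subset U_{r'}\subset\sigma_s^{-1}(\mathcal D_\C)$; here $Q\subset S_{s,r'}$ holds because $[-1,1]\subset\mathcal E_\varrho$ for every $\varrho>1$, and a sequence such as $\varrho_j=1+\delta j^{-2}$ with small $\delta$ is admissible.

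\emph{Lower inclusion and openness.} The key one-dimensional estimate is that there is $C\ge1$ such that, for $\rho\in(1,2]$, the ellipse $\mathcal E_\rho$ contains the complex $\eta$-neighbourhood of $[-1,1]$ with $\eta=(\rho-1)/C$. This needs care near the endpoints $\pm1$: the semi-minor axis is $(\rho-\rho^{-1})/2\approx\rho-1$, and the distance from $\pm1$ to $\partial\mathcal E_\rho$ along the real axis is $(\rho+\rho^{-1})/2-1\approx(\rho-1)^2/2$. So the naive linear inclusion fails at the tips, and I will instead use $\eta\simeq(\rho-1)^2$. Since $\mathcal E_\rho$ is unbounded in its parameter, I also allow $\rho$ to be large, taking $\rho_j-1\simeq\sqrt{|z_j-c_j|}$.

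Given $z$ with $d(z,Q)<\epsilon$, pick $c\in Q$ with $\sum_j|z_j-c_j|j^{-s}<\epsilon$. I then choose $\varrho_j=1+\max\{C\sqrt{|z_j-c_j|},\,\delta_j\}$, with the tiny slack $\delta_j=\delta j^{-2}$, so that $z\in\mathcal E^\infty_{\boldsymbol\varrho}$. Admissibility requires controlling $\sum_j j^{-s}\sqrt{|z_j-c_j|}$. By Cauchy--Schwarz this is at most
\begin{align*}
    \Bigl(\sum_j j^{-s}\Bigr)^{1/2}\Bigl(\sum_j j^{-s}|z_j-c_j|\Bigr)^{1/2}\lesssim\sqrt{\epsilon},
\end{align*}
which is small. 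Thus $U_{\epsilon}\subset S_{s,r}$ for $\epsilon\simeq r^2$. A square-root loss is harmless, since only existence of some radii is claimed.

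The same argument applied around any point gives openness: for $z_0\in\mathcal E^\infty_{\boldsymbol\varrho}$ with admissible $\boldsymbol\varrho$, the strict inequality in \eqref{eq: admissible ellipse} leaves slack $\theta>0$. I enlarge the ellipses by replacing $\varrho_j$ with $\varrho_j+C\sqrt{|z_j-z_{0,j}|}$, using the fact that $\mathcal E_{\rho}$ plus a $\eta$-disc lies in $\mathcal E_{\rho+C\sqrt\eta}$ (a Hausdorff-distance estimate for nested confocal ellipses). Together with the Cauchy--Schwarz bound this keeps the enlarged sequence admissible for $\|z-z_0\|_{\ell^1_{-s}}$ small.

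\emph{Second nesting.} Finally, since $\sigma_s^{-1}(\mathcal D_\C)$ is bounded in $\ell^1_{-s}$ (Remark \ref{rem: compactness}), it lies in $U_R$ for some $R$. Hence it lies in $S_{s,\widetilde r}$ once $\widetilde r$ is chosen with $U_R\subset S_{s,\widetilde r}$ by the lower inclusion, i.e.\ $\widetilde r\gtrsim\sqrt R$ with $\widetilde r>r'$. The main obstacle is the one-dimensional geometric lemma on neighbourhoods of $[-1,1]$ and of $\mathcal E_\rho$ inside larger Bernstein ellipses, with uniform constants over all $\rho$. I would prove it using $z=\cosh(w)$, so that $\mathcal E_\rho=\cosh(\{|\operatorname{Re}w|\le\log\rho\})$, together with local estimates of $\operatorname{arccosh}$, which is $1/2$-Hölder near $\pm1$.
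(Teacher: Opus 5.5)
\textbf{The gap.} Your one-dimensional estimate fails for large deviations, and that regime cannot be avoided. Write $\eta=\operatorname{dist}(w,[-1,1])$. The sharp bound is $\varrho^*(w)-1\le h(\eta)$ with $h(\eta)=\eta+\sqrt{\eta^2+2\eta}$, attained at $w=1+\eta$. This grows linearly in $\eta$, not like $\sqrt\eta$ (indeed $\varrho^*(w)\sim 2|w|$). So for any fixed $C$, the choice $\varrho_j=1+C\sqrt{|z_j-c_j|}$ fails to put $z_j$ into $\mathcal E_{\varrho_j}$ once $|z_j-c_j|\gtrsim C^2$. Likewise $\mathcal E_\rho+\overline B_\eta\not\subset\mathcal E_{\rho+C\sqrt\eta}$ for large $\eta$.

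Smallness of $\|z-c\|_{\ell^1_{-s}}$ does not keep the individual deviations small: perturb $c$ by $\tfrac{\epsilon}{2}j^s$ in a single coordinate $j\gg1$. In the second nesting $R$ is large anyway. So, as written, your lower inclusion, the openness step and the claim $\widetilde r\gtrsim\sqrt R$ all rest on an incorrect estimate.

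\textbf{The repair.} It is short. Use the focal description $\mathcal E_\rho=\{w:|w-1|+|w+1|\le\rho+\rho^{-1}\}$ and the triangle inequality. These give both $\operatorname{dist}(w,[-1,1])\le\eta\Rightarrow w\in\mathcal E_{1+h(\eta)}$ and $\mathcal E_\rho+\overline B_\eta\subset\mathcal E_{\rho+h(\eta)}$, uniformly in $\rho\ge1$, where $h(\eta)\le2\eta+\sqrt{2\eta}$. Take $\varrho_j-1\ge h(\eta_j)$. Bound the square-root part by Cauchy--Schwarz and the linear part trivially, which gives $\sum_j j^{-s}h(\eta_j)\lesssim\sqrt\epsilon+\epsilon$. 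With this, your argument goes through, now with $\widetilde r\gtrsim R$ rather than $\sqrt R$.

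\textbf{Comparison with the paper.} Once repaired, your route is genuinely different from the paper's. The paper sets $\varrho^*(w)=\inf\{\varrho>1:w\in\mathcal E_\varrho\}$ and $R^*(z)=\sum_j(\varrho^*(z_j)-1)j^{-s}$. It proves $R^*$ continuous on $\ell^1_{-s}(\N)_\C$ using only uniform continuity of $\varrho^*$ on compact discs and the crude tail bound $\varrho^*(w)-1\le 2|w|$. It then identifies $S_{s,r}=\{R^*<r\}$ exactly, from which:
\begin{enumerate}
\item openness is immediate;
\item the inclusion into $\sigma_s^{-1}(\mathcal D_\C)$ follows from $\operatorname{dist}(z,Q)\le R^*(z)$ with a radius below $r_*$;
\item the last inclusion follows from $R^*(z)\le2\|z\|_{\ell^1_{-s}}\le2M$, with radius $2M+1$, and needs no neighbourhoods of $Q$.
\end{enumerate}
Your approach avoids the level-set characterization but needs a quantitative lemma on Bernstein ellipses. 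In return it yields an explicit sandwich $U_{cr^2}\subset S_{s,r}\subset U_r$ for small $r$, in effect a local $1/2$-H\"older modulus for $R^*$, which the paper's qualitative continuity argument does not give. For the statement as posed, the paper's argument is shorter.
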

The proof of this proposition is given in Appendix \ref{appendix: proof of extension}.

This previous construction allows us to freely switch between two equivalent parametrizations of the input parameter, either in the \textit{coefficient representation} of $c \in Q$ or in the \textit{Hilbert-space representation} of $a= \sigma_s(c)\in \mathcal X$. For this reason, we impose the following assumption on the distribution of the design measure $\gamma$.
\begin{assumption}\label{assumption: measure}
    Let $s>1$ and let $\sigma_s$ be defined with respect to a fixed Riesz basis $\Psi\subset \mathcal X$. Let 
    \begin{align*}
        \pi = \bigotimes_{j=1}^\infty \operatorname{Unif}([-1,1])
    \end{align*}
    be the product uniform probability measure on $Q = [-1,1]^{\mathbb N}$ and assume that $\gamma$ is a probability measure on $\mathcal X$ defined by the push-forward $\gamma \coloneqq (\sigma_s)_{\#}\pi$.
\end{assumption}

Since $\sigma_s: Q \to \sigma_s(Q)$ is bijective, the composition with $\sigma_s$ defines an isometric isomorphism
\begin{align}\label{eq: isometric isomorphism}
    U: L^2(\sigma_s(Q), \mathcal Y, \gamma)\to L^2(Q, \mathcal Y, \pi)
\end{align}
with $U G = G\circ \sigma_s$ and inverse $U^{-1}F = F \circ \sigma_s^{-1}$. Indeed, by a change of measure, 
\begin{align*}
    \|UG\|^2_{L^2(Q, \mathcal Y, \pi)} = \int_{Q}\|G(\sigma_s(c))\|_{\mathcal Y}^2 ~d\pi(c) = \int_{\sigma_s(Q)}\|G(x)\|_{\mathcal Y}^2 ~d\gamma(x) =  \|G\|_{L^2(\gamma, \mathcal Y)} ^2.
\end{align*}
In particular, $\operatorname{supp}(\gamma) = \sigma_s(Q)$ and since $Q$ is compact in $\ell^1_{-s}(\N)$ and $\sigma_s$ is continuous, $\operatorname{supp}(\gamma)$ is compact in $\mathcal X$.

As we will see in Section \ref{subsection: approximation rate} and Section \ref{section: lower bounds}, $\mathcal H^{s,t}(L; \mathcal Y)$ is the appropriate class of holomorphic operators both from an approximation-theoretic and statistical viewpoint. The parameter $s>1$ encodes the anisotropic dependence on the input variables and the compactness of $\sigma_s(Q)$ in $\mathcal X$. The parameter $t>0$ imposes additional regularity in the output space of the holomorphic operator. Such regularity conditions are common in the context of parametric PDEs.

Since $\mathcal Y^t$ embeds compactly into $\mathcal Y$, the operator class is pointwise relatively compact. Together with the equicontinuity, which follows from the holomorphy assumption, this yields relative compactness in the relevant Bochner-space topology by the Arzelà-Ascoli theorem, see Appendix~\ref{appendix: compactness} for more details. 

This compactness is essential for information-theoretic arguments based on packing and covering numbers. Without such compactness in the input and output space, uniformly consistent statistical estimation becomes impossible. This follows for example from \parencite[Theorem 5.7]{johnstone2019gaussian}, see also \parencite{ibragimov1977estimation, ibragimov1997some} for the original statement. 

\subsection{Approximation Rates}\label{subsection: approximation rate}

We next recall quantitative polynomial approximation rates for the class $\mathcal H^{s,t}(L;\mathcal Y_{\C})$ from \cite{herrmann_neural_2024,reinhardt2026statistical}. The sparsity of
the Legendre polynomial chaos expansion is implied by the holomorpy
established in Lemma \ref{lemma: (b,r)-holomorphy}. Such results were
first proven by \textcite{cohen_analytic_2011}.

Denote by $(L_j)_{j\in \N}$ the $L^2([-1,1])$-normalized Legendre polynomials with $L_0 =1$, which are an orthonormal basis of $L^2([-1,1])$. It is well-known, that 
\begin{align}\label{eq: Legendre Upper Bound}
    \sup_{x\in [-1,1]} |L_j(x)| \leq \sqrt{1 + 2j}\quad \forall j \in \N_0.
\end{align}
Let $\mathcal F$ be the set of finitely supported multi-indices
\begin{align*}
    \mathcal F = \{\nu = (\nu_j)_{j\in \N_0}\in \N_0^{\N}: |\text{supp }\nu|<\infty\},
\end{align*}
with $\text{supp }\nu = \{j: \nu_j \neq 0\}$. For $c \in Q = [-1,1]^{\N}$ and $\nu \in \mathcal F$, define the tensorized Legendre polynomials by
\begin{align}\label{eq: tensorized Legendre}
    L_\nu(c) \coloneqq \prod_{j=1}^{\infty} L_{\nu_j}(c_j).
\end{align}
Then $(L_\nu)_{\nu \in \mathcal F}$ is an orthonormal basis of $L^2(Q, \pi)$. For $\nu \in \mathcal F$, define 
\begin{align}\label{eq: Legendre normalization}
    w_\nu = \prod_{j=1}^\infty (1 + 2\nu_j).
\end{align}

Define the $L^\infty([-1,1])$-normalized Legendre polynomials $(P_j)_{j\in \N}$ and normalize by the upper bound \eqref{eq: Legendre Upper Bound}, i.e  $P_j = L_j/(\sqrt{1 + 2j})$. The tensorized version $(P_\nu)_{\nu \in \mathcal F}$ are therefore defined by $P_\nu = w_\nu^{-1/2} L_\nu$.

By the isometric isomorphism $L^2(\mathcal X, \gamma) \cong L^2(Q, \pi)$, one may equivalently expand with respect to $(L_\nu)_{\nu \in \mathcal F}$ in $L^2(Q,  \pi)$ or with respect to $L_\nu \circ \sigma_s^{-1}$ in $L^2(\operatorname{supp}(\gamma),  \gamma)$. In particular, we will see that by the holomorphy of $G\in \mathcal H^{s,t}(L; \mathcal Y_{\C})\subset L^\infty(\operatorname{supp}(\gamma), \mathcal Y)\subset L^2(\gamma, \mathcal Y)$, the map $F = G\circ \sigma_s : Q \to \mathcal Y$ admits a unique expansion in $L^2(Q, \mathcal Y, \pi)$ by
\begin{align*}
    F(c) = \sum_{\nu \in \mathcal F} g_\nu L_\nu = \sum_{\nu \in \mathcal F} g_\nu^\infty P_\nu. 
\end{align*}
For each $\nu \in \mathcal F$, define the $\mathcal Y$-valued Legendre coefficient 
\begin{align}\label{eq: Y-valued Legendre coefficients}
  g_\nu = \int_Q L_\nu(c) F(c) ~d\pi(c) = \int_{\sigma_s(Q)} L_\nu(\sigma_s^{-1}(x)) G(x)~d\gamma(x), \quad
    g_\nu^\infty = w_\nu^{1/2}g_\nu.
\end{align}
and with respect to the Riesz basis $\Phi = (\phi_j)_{j\in \N}$ of $\mathcal Y$, 
\begin{align}\label{eq: scalar valued Legendre coefficients}
    g_{\nu,j} = \left \langle\int_Q L_\nu(c)  F(c)   ~ d\pi(c), \widetilde \phi_j\right\rangle_{\mathcal Y} = \int_Q L_\nu(c)  \left \langle F(c), \widetilde \phi_j\right\rangle_{\mathcal Y}   ~ d\pi(c)
\end{align}
Then 
\begin{align*}
    F(c) &= \sum_{\nu \in \mathcal F} g_\nu L_\nu(c)  = \sum_{j= 1}^\infty \sum_{\nu \in \mathcal F}  g_{\nu,j} L_\nu(c) \phi_j \\
    &= \sum_{\nu \in \mathcal F} g_\nu^\infty P_\nu(c) = \sum_{j= 1}^\infty \sum_{\nu \in \mathcal F}  g_{\nu,j}^\infty P_\nu(c) \phi_j 
\end{align*}
and 
\begin{align}\label{eq: polynomial chaos expansion}
    G(x) &= \sum_{\nu \in \mathcal F} g_\nu L_\nu(\sigma_s^{-1}(x)) =\sum_{j= 1}^\infty \sum_{\nu \in \mathcal F}  g_{\nu,j} L_\nu(\sigma_s^{-1}(x)) \phi_j\\
    &=\sum_{\nu \in \mathcal F} g_\nu^\infty P_\nu(\sigma_s^{-1}(x)) = \sum_{j= 1}^\infty \sum_{\nu \in \mathcal F}  g^\infty_{\nu,j} P_\nu(\sigma_s^{-1}(x)) \phi_j.
\end{align}

The following result is a version of \parencite[Theorem 5]{herrmann_neural_2024} or \parencite[Theorem 54]{reinhardt2026statistical} which themselves are reformulations of \parencite[Theorem 2.2.10]{zech_sparse-grid_2018}. 

\begin{theorem}\label{theorem: coefficient bounds}
  Let $s>1$, $t>0$, $L>0$, $t'\in [0,t)$, and fix 
  $\vartheta>0$ and 
  $p\in (1/s, 1)$.
  Let Assumption \ref{assumption: measure} hold. Let $\mathcal X_{\C}$, $\mathcal Y_{\C}$,  $\mathcal O_{\C}$,  $\mathcal D_{\C}$ and $\Psi = (\psi_j)_{j\in \N}\subset \mathcal X$ as in Definition \ref{def: holomorphic operator}. 
  Then there exist a constant $C>0$ and a non-negative sequence $(a_{\nu})_{\nu \in \mathcal F}\in \ell^p(\mathcal F)$ such that 
  for all $G\in \mathcal H^{s,t}(\mathcal O_{\C}, \mathcal D_{\C}, \mathcal Y_{\C}, L)$
    \begin{enumerate}
        \item For each $\nu \in \mathcal F$, the Legendre coefficients $g_\nu$ satisfy\label{item: 1}
        \begin{align}\label{eq: coefficient bound}
            w_\nu^\vartheta \left\Vert g_\nu\right\Vert_{\mathcal Y^{t'}_{\C}} \leq C a_\nu.
        \end{align} 
        \item There exists an enumeration $(\nu_i)_{i\in \N}$, such that $(a_{\nu_i})_{i\in \N}$ is monotonically decreasing and the set $\Lambda_N \coloneqq\{\nu_i: i \leq N\}\subset \mathcal F$ is  downward closed \footnote{A set $\Lambda \subset \mathcal F$ is called \textit{downward closed}, if $v\in \Lambda$ is implied by $\mu\in \Lambda$ and $\nu \leq \mu$ (element-wise)}. \label{item: 2}
        \item The expansion 
        \begin{align*}
             G(\sigma_s(c)) = \sum_{\nu \in \mathcal F} g_\nu L_\nu(c) 
        \end{align*}
        holds in $\mathcal Y^{t'}$ for all $c \in Q$ with absolute and uniform convergence.\label{item: 3}
    \end{enumerate}
\end{theorem}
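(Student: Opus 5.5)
The plan is to follow the classical Cohen--DeVore--Schwab argument for $(b,\varepsilon)$-holomorphic maps, as formulated in \parencite[Theorem 2.2.10]{zech_sparse-grid_2018}. The input is Lemma \ref{lemma: (b,r)-holomorphy}: $F=G\circ\sigma_s$ extends separately analytically to every admissible poly-ellipse $\mathcal E^\infty_{\boldsymbol\varrho}$, with $\sup\|F\|_{\mathcal Y^t_\C}\le L$ uniformly in $G$. For $t'\le t$ we have $\|\cdot\|_{\mathcal Y^{t'}}\le\|\cdot\|_{\mathcal Y^t}$, so the same bound holds in $\mathcal Y^{t'}$. Hence $t'<t$ plays no role in item (i) and only matters for the convergence statements.

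\textbf{Step 1 (coefficient estimate for a single $\nu$).} Fix $\nu\in\mathcal F$ with support $I$, and choose an admissible $\boldsymbol\varrho$ depending on $\nu$. For the finitely many variables in $I$, apply Cauchy's integral formula on the Bernstein ellipses $\mathcal E_{\varrho_j}$ coordinatewise, using the standard Legendre bound of \cite{cohen_analytic_2011}, and integrate out the remaining variables. This yields the Bochner-valued estimate
\begin{align*}
  \|g_\nu\|_{\mathcal Y^{t'}_\C}\le L\prod_{j\in I}\varphi(\varrho_j)\,(1+2\nu_j)^{1/2}\varrho_j^{-\nu_j},
\end{align*}
with $\varphi(\varrho)=\pi\varrho/(2(\varrho-1))$. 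Everything is valid in $\mathcal Y^{t'}_\C$ because the Cauchy integrals are Bochner integrals of continuous functions.

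\textbf{Step 2 (optimizing $\boldsymbol\varrho$ and summability).} Following \cite{cohen_analytic_2011}, I would pick $J$ large and take $\varrho_j = 1+\kappa$ for $j\le J$. For $j>J$, take $\varrho_j-1\simeq \nu_j j^{s}/|\nu_{>J}|$, scaled so that \eqref{eq: admissible ellipse} holds with margin. Define $a_\nu$ as the infimum over admissible $\boldsymbol\varrho$ of $w_\nu^{\vartheta+1/2}\prod_{j}\varphi(\varrho_j)\varrho_j^{-\nu_j}$ (with $\varphi$ taken as $1$ off the support). The extra polynomial factor $w_\nu^{\vartheta}$ is absorbed by slightly shrinking $\boldsymbol\varrho$, since $(1+2n)^{\vartheta}\le C_\delta(1+\delta)^n$. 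This gives item (i) with a constant independent of $G$. The main obstacle is proving $(a_\nu)\in\ell^p(\mathcal F)$ for every $p>1/s$. I would use the standard factorization into a finite-dimensional part, which is geometrically summable, and a tail part. The tail bound has the form $\frac{|\nu|!}{\nu!}\prod (b_j\eta)^{\nu_j}$ with $b_j\simeq j^{-s}$, and it is $\ell^p$-summable iff $b\in\ell^p$ and $\|b\|_{\ell^1}\eta<1$. This holds precisely for $p>1/s$, which is where the hypothesis on $p$ enters.

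\textbf{Step 3 (enumeration, downward closedness, convergence).} Item (ii) follows by replacing $a_\nu$ with its monotone majorant $\tilde a_\nu=\sup_{\mu\ge\nu}a_\mu$. This is still in $\ell^p$ by \cite[Lemma 3.10]{zech_sparse-grid_2018}, or directly, because the bounds above are monotone in $\nu$ when all $\varrho_j>1$. A decreasing enumeration with ties broken consistently with the partial order then yields downward closed $\Lambda_N$. For item (iii), note that $\sup_Q|L_\nu|\le w_\nu^{1/2}$ and that $\vartheta$ may be taken $\ge1/2$. Thus $\sum_\nu\|g_\nu\|_{\mathcal Y^{t'}}\sup_Q|L_\nu|\le C\sum a_\nu<\infty$ since $p<1$, giving absolute and uniform convergence. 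The limit coincides with $F$ because both agree in $L^2(Q,\mathcal Y,\pi)$ and both are continuous on $Q$ by Lemma \ref{lemma: (b,r)-holomorphy}(i).
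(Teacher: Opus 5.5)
The paper does not prove this theorem itself. It imports it from \cite[Theorem 2.2.10]{zech_sparse-grid_2018} via \cite{herrmann_neural_2024}, and your Steps 1--2 and your argument for item (iii) are a sound reconstruction in outline of that Cohen--DeVore--Schwab argument.

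The step that does not hold as written is item (ii). You replace $a_\nu$ by $\tilde a_\nu=\sup_{\mu\ge\nu}a_\mu$ and assert that this stays in $\ell^p(\mathcal F)$. As a general statement this is false, already in one variable. Take $a_{2^k}=k^{-2/p}$ and $a_n=0$ otherwise; then $\sum_n a_n^p<\infty$, but $\sum_n\tilde a_n^p\ge\sum_k 2^{k-1}k^{-2}=\infty$.

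Your ``direct'' justification also fails, because the Legendre bounds are not monotone in $\nu$. Switching on a coordinate $j$ multiplies $w_\nu^{\vartheta+1/2}\prod_{j\in\operatorname{supp}\nu}\varphi(\varrho_j)\varrho_j^{-\nu_j}$ by $3^{\vartheta+1/2}\varphi(\varrho_j)\varrho_j^{-1}=3^{\vartheta+1/2}\pi/(2(\varrho_j-1))$. Admissibility forces $\varrho_1-1<r$, so $a_{e_1}\ge 3^{\vartheta+1/2}\pi/(2r)>1=a_0$ whenever $r<3^{\vartheta+1/2}\pi/2$. Likewise, your explicit tail bound $\frac{|\nu_F|!}{\nu_F!}d^{\nu_F}$ (with $d_j=\eta b_j$) is multiplied by $(|\nu_F|+1)d_k$ when $e_k$ is added, and this exceeds $1$ for large $|\nu_F|$.

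So a decreasing enumeration of your $a_\nu$ need not have downward closed initial segments. The $\ell^p$-summability of a monotone majorant, i.e.\ the ``$\ell^p_m$'' part of the theory (cf.\ \cite{chkifa_breaking_2015}), is exactly what must be proved. Citing a lemma here is only legitimate if it concerns this specific family of bounds.

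A fix within your framework is to take the majorant of the explicit product bound produced by your choice of $\boldsymbol\varrho$, not of $a_\nu$. Write $E=\{1,\dots,J\}$ and $F=\{j>J\}$. First absorb the polynomial factors. In the tail, do this not by shrinking $\varrho_j$, which may be arbitrarily close to $1$ there, but via
\[
\varphi(\varrho)(1+2n)^{\vartheta+1/2}\varrho^{-n}\le\bigl(\tfrac{\pi}{2}3^{\vartheta+1/2}\bigr)^{n}(\varrho-1)^{-n},\qquad n\ge1 .
\]
This gives $a_\nu\le\beta(\nu_E)\,\frac{|\nu_F|!}{\nu_F!}d^{\nu_F}$ with $d_j\simeq j^{-s}$. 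Here $\beta$ is a product of one-variable factors, equal to $1$ for $m=0$ and to $\varphi(1+\kappa)(1+2m)^{\vartheta+1/2}(1+\kappa)^{-m}$ for $m\ge1$.

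Since the bound is a product over $E$ and $F$ and $\{\mu\ge\nu\}$ is a product set, the majorant factorizes. The head majorant is $\lesssim\tilde\varrho^{-|\nu_E|}$ for any $\tilde\varrho\in(1,1+\kappa)$. For the tail, use $(\nu+\lambda)!\ge\nu!\lambda!$, $\binom{|\nu|+|\lambda|}{|\nu|}\le 2^{|\nu|+|\lambda|}$ and the multinomial identity:
\[
\sup_{\mu\ge\nu}\frac{|\mu|!}{\mu!}d^{\mu}\le\frac{|\nu|!}{\nu!}(2d)^{\nu}\sum_{\lambda}\frac{|\lambda|!}{\lambda!}(2d)^{\lambda}=\frac{1}{1-2\|d_F\|_{\ell^1}}\,\frac{|\nu|!}{\nu!}(2d)^{\nu}.
\]
Choose $J$ so large that $2\sum_{j>J}d_j<1$; then the right-hand side lies in $\ell^p$ for every $p>1/s$. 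The resulting majorant is monotone, strictly positive, in $\ell^p$ and independent of $G$. Your tie-breaking argument then yields (ii), and (i) and (iii) continue to hold with this larger sequence.
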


From the $\ell^p(\mathcal F)$-summability of $(a_\nu)_{\nu \in \mathcal F}$, one can derive best $N$-term approximation rates and a priori downward closed index sets $\Lambda_N$ that can be chosen independently of a specific $G$. We will use these index sets below to construct approximation of the operator $G$, as well as projection estimators in Section \ref{section: upper bounds}. Indeed, for the enumeration $(\nu_i)_{i\in \N}$ in item~\ref{item: 2} of Theorem~\ref{theorem: coefficient bounds}, 
\begin{align*}
    a_{\nu_i} \lesssim i^{-1/p}.
\end{align*}
Applying Stechkin's Lemma \parencite{stechkin1951absolute} then yields the $L^2$-approximation rate $N^{-(1/p - 1/2)}$ and $L^\infty$-approximation rate $N^{-(1/p-1)}$. 

To be more precise, fix $N\in \N$ and let $\Lambda_N$ be an index set of cardinality $N$. Define $(g_\nu)_{\nu \in \mathcal F}$ as in \eqref{eq: Y-valued Legendre coefficients} and assume that $\mathcal Y = \R$. The truncation of the expansion \eqref{eq: polynomial chaos expansion}
corresponding to the index set $\Lambda_N$ is defined as
\begin{align}\label{eq: Legendre projection}
    g_N(x) \coloneqq \sum_{\nu \in \Lambda_N} g_\nu L_\nu(\sigma_s^{-1}(x))= \sum_{\nu \in \Lambda^\infty_N} g_\nu^\infty P_\nu(\sigma_s^{-1}(x)),
\end{align}
where $x\in \operatorname{supp}(\gamma)$. For holomorphic functionals, this yields the following approximation result \cite{cohen_analytic_2011,chkifa_breaking_2015,cohen2015approximation}.

\begin{theorem}[Rates of approximation for holomorphic functionals]\label{theorem: realvalued holomorphic approximation}
    Let $s>1$,  $L>0$, and let $\mathcal X_{\C}$,   $\mathcal O_{\C}$,  $\mathcal D_{\C}$ and $\Psi = (\psi_j)_{j\in \N}\subset \mathcal X$ as in Definition \ref{def: holomorphic operator}.
    Fix $\delta>0$ and let $\gamma$ be as in Assumption \ref{assumption: measure}.
    Then there exist constants $C_2$, $C_\infty>0$ such that, for every $N\in \N$, there exists a downward closed index set $\Lambda_N$ of cardinality $N$, and for every $g_0\in \mathcal H^{s}(\mathcal O_{\C}, \mathcal D_{\C}, \C, L)$ it holds
    \begin{align}\label{eq: L2 functional approximation}
        \lVert g_0 - g_N\rVert_{L^2(\gamma)}\leq C_{2} N^{-(s - 1/2 ) + \delta}
    \end{align}
    and
    \begin{align}\label{eq: Linfty functional approximation}
        \lVert g_0 - g_N\rVert_{\infty}\leq C_{\infty} N^{-(s - 1) + \delta}.
    \end{align}
\end{theorem}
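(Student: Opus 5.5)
The plan is to specialize Theorem \ref{theorem: coefficient bounds} to $\mathcal Y=\R$ and then combine the coefficient bound \eqref{eq: coefficient bound} with Stechkin-type tail estimates. Since $\mathcal Y$ is one-dimensional, the boundedness condition in Definition \ref{definition: holomorphic functional} plays the role of item \ref{item: uniform boundedness}. We may therefore apply Theorem \ref{theorem: coefficient bounds} with a trivial output space ($t'=0$). Fix $p\in(1/s,1)$ close to $1/s$, namely $p$ with $1/p = s-\delta$. The theorem then gives a constant $C$ and a nonnegative sequence $(a_\nu)\in\ell^p(\mathcal F)$, independent of $g_0$, such that
\begin{align*}
  w_\nu^\vartheta|g_\nu|\le C a_\nu \qquad \text{for all } \nu\in\mathcal F \text{ and all } g_0\in\mathcal H^s(L;\C).
\end{align*}
We take $\vartheta=1/2$, so that $|g_\nu^\infty| = w_\nu^{1/2}|g_\nu|\le Ca_\nu$; in particular also $|g_\nu|\le Ca_\nu$. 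For $\Lambda_N$ we use the downward closed set from item \ref{item: 2}, formed by the $N$ largest $a_\nu$. This set does not depend on $g_0$, which is exactly what the uniformity in the statement requires.

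Next we use monotonicity. Since $(a_{\nu_i})$ is decreasing and lies in $\ell^p$,
\begin{align*}
  i\,a_{\nu_i}^p\le\sum_{k\le i}a_{\nu_k}^p\le\|a\|_{\ell^p}^p, \qquad\text{so}\qquad a_{\nu_i}\le \|a\|_{\ell^p}\, i^{-1/p}.
\end{align*}
For the $L^2$ bound we use Parseval in $L^2(Q,\pi)$ together with the isometry $U$ from \eqref{eq: isometric isomorphism}:
\begin{align*}
  \|g_0-g_N\|_{L^2(\gamma)}^2=\sum_{i>N}|g_{\nu_i}|^2\le C^2\sum_{i>N}a_{\nu_i}^2\lesssim \sum_{i>N} i^{-2/p}\lesssim N^{-(2/p-1)}.
\end{align*}
Hence $\|g_0-g_N\|_{L^2(\gamma)}\lesssim N^{-(1/p-1/2)}=N^{-(s-1/2)+\delta}$.

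For the sup-norm bound we use item \ref{item: 3}: the expansion converges absolutely and uniformly on $Q$, so $g_0\circ\sigma_s-g_N\circ\sigma_s=\sum_{i>N}g^\infty_{\nu_i}P_{\nu_i}$ pointwise. Since $\|P_\nu\|_\infty\le1$ by \eqref{eq: Legendre Upper Bound}, and $\sigma_s$ is a bijection from $Q$ onto $\operatorname{supp}(\gamma)$,
\begin{align*}
  \|g_0-g_N\|_\infty\le\sum_{i>N}|g^\infty_{\nu_i}|\le C\sum_{i>N}a_{\nu_i}\lesssim N^{-(1/p-1)}=N^{-(s-1)+\delta}.
\end{align*}
The constants $C_2$ and $C_\infty$ depend only on $C$, $\|a\|_{\ell^p}$ and $p$.

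The main obstacle is justifying that Theorem \ref{theorem: coefficient bounds} really applies with weight $w_\nu^{1/2}$ and summability exponent $p$ arbitrarily close to $1/s$, for the functional class and with constants uniform in $g_0$. Concretely, this means invoking the scalar ($t=0$) version of the underlying result \parencite[Theorem 2.2.10]{zech_sparse-grid_2018}. There, the Cauchy estimates on admissible poly-ellipses from Lemma \ref{lemma: (b,r)-holomorphy} produce bounds of the form $|g_\nu|\le L\prod_j \varrho_j^{-\nu_j}(\cdots)$. Optimizing over admissible $\boldsymbol\varrho$ yields an $\ell^p$ majorant for every $p>1/s$ when $b_j=j^{-s}$, and the polynomial factor $w_\nu^{\vartheta}$ is absorbed into it. The remaining steps are routine tail sums.
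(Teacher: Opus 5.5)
Your proposal is correct and follows the route the paper itself indicates just before the theorem: apply Theorem~\ref{theorem: coefficient bounds} with $\vartheta=1/2$ and $p$ close to $1/s$, use $a_{\nu_i}\lesssim i^{-1/p}$ along the monotone, $g_0$-independent enumeration, and then take Stechkin-type tail sums, with Parseval for the $L^2$ error and $\|P_\nu\|_\infty\le 1$ for the sup-norm error. The only implicit point is that choosing $1/p=s-\delta$ requires $\delta<s-1$ so that $p<1$. This is harmless, because for $N\ge 1$ both bounds only become weaker as $\delta$ increases.
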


For $\mathcal Y$-valued operators, we combine the polynomial approximation with a truncation in the output Riesz basis $\Phi$ of $\mathcal Y$. Recall that $(g_{\nu,j})_{\nu \in \mathcal F, j\in \N}$ denotes the corresponding Legendre coefficients in the generalized polynomial chaos expansion of any element $G\in L^2(\gamma, \mathcal Y)$ by 
\begin{align*}
    G(x) = \sum_{j=1}^\infty \sum_{\nu\in \mathcal F}    g_{\nu,j} L_\nu(\sigma_s^{-1}(x)) \phi_j, \qquad x\in \operatorname{supp}(\gamma).
\end{align*}
To truncate this expansion, we allow the truncation of the inner sum to also depend on $j$, i.e. to consider index sets of the form $(\Lambda_{N,j})_{j\leq N} \subset \mathcal F$ as in \parencite{herrmann_neural_2024}. For index sets $(\Lambda_{N,j})_{j\leq N},(\Lambda^\infty_{N,j})_{j\leq N} \subset \mathcal F$ and $x\in \operatorname{supp}(\gamma)$, we define
\begin{align}\label{eq: L2 gen Legendre projection}
    G_N (x) = \sum_{j=1}^N \sum_{\nu \in \Lambda_{N,j}} g_{\nu,j} L_\nu(\sigma_s^{-1}(x)) \phi_j
\end{align}
and 
\begin{align}\label{eq: Linfty gen Legendre projection}
    G_N^\infty (x) = \sum_{j=1}^N \sum_{\nu \in \Lambda^\infty_{N,j}} g_{\nu,j}^\infty P_\nu(\sigma_s^{-1}(x)) \phi_j.
\end{align}

Combined with the weighted summability results for the Legendre coefficients from Theorem~\ref{theorem: coefficient bounds}, bounds may be reformulated into a bound for the Legendre coefficients $g_{\nu,j}$. This yields the following approximation result
which can be found in \parencite[Theorem 6]{herrmann_neural_2024}: More precisely, \parencite[Theorem 6]{herrmann_neural_2024} gives a convergence rate in terms of a neural network approximation. Since the proof reapproximates a Legendre expansion with neural networks, the resulting error bound also holds for the Legendre expansion.

\begin{theorem}\label{theorem: operator approximation rate}
    Let $s>1$, $t>0$, $L>0$ and let $\mathcal X_{\C}$, $\mathcal Y_{\C}$,  $\mathcal O_{\C}$,  $\mathcal D_{\C}$ and $\Psi = (\psi_j)_{j\in \N}\subset \mathcal X$ as in Definition \ref{def: holomorphic operator} . Additionally, let $G_0\in \mathcal H^{s,t}(L; \mathcal Y_{\C})$ and suppose that Assumption \ref{assumption: measure} holds. For every $\delta>0$, there exist constants $C_2, C_\infty >0$ (depending on $s$, $t$, $L$, but independent of $G_0$) such that, for every $N\in \N$, there are downward closed sets $(\Lambda_{N,j})_{j\leq N}$ and $(\Lambda^\infty_{N,j})_{j\leq N}$ satisfying $\sum_{j=1}^\infty |\Lambda_{N,j}|\leq N$ and $\sum_{j=1}^\infty |\Lambda^\infty_{N,j}|\leq N$. For these index sets, it holds that
    \begin{align*}
        \lVert G_N - G_{0} \rVert _{L^{2}(\gamma, \mathcal Y)} \leq C_{2}N^{-\min \{s-1/2,t\} + \delta }
    \end{align*}
and 
    \begin{align*}
        \lVert  G_{N}^\infty - G_{0} \rVert _{\infty, \mathcal Y}\leq C_{\infty}N^{-\min \{s-1,t\} + \delta }.
    \end{align*}
\end{theorem}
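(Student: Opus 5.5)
The plan is to reduce everything to the scalar coefficient bounds of Theorem~\ref{theorem: coefficient bounds} applied componentwise in the output basis $\Phi$, followed by a greedy allocation of the budget $N$ among the pairs $(\nu,j)$.

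\emph{Coefficient bound.} Fix $\delta>0$, choose $p\in(1/s,1)$ close to $1/s$, and choose $t'<t$ close to $t$. For $G_0\in\mathcal H^{s,t}(L;\mathcal Y_{\C})$, item~\ref{item: 1} of Theorem~\ref{theorem: coefficient bounds} with $\vartheta$ large gives $w_\nu^\vartheta\|g_\nu\|_{\mathcal Y^{t'}}\le Ca_\nu$, uniformly in $G_0$. Since $\Phi$ is a Riesz basis, this yields
\begin{align*}
  \sum_j j^{2t'}|g_{\nu,j}|^2\lesssim \|g_\nu\|_{\mathcal Y^{t'}}^2,
  \qquad\text{hence}\qquad
  |g_{\nu,j}|\le C' j^{-t'} w_\nu^{-\vartheta}a_\nu .
\end{align*}
This defines the $G_0$-independent majorant $b_{\nu,j}:=j^{-t'}w_\nu^{-\vartheta}a_\nu$ for the pair $(\nu,j)$.

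\emph{Choice of index sets.} Select the $N$ pairs $(\nu,j)$ with the largest $b_{\nu,j}$, and set $\Lambda_{N,j}:=\{\nu:(\nu,j)\text{ selected}\}$. This is a thresholding rule $b_{\nu,j}>\eta$. Because the enumeration in item~\ref{item: 2} has $a$ monotone along a downward-closed nested family, the sets $\Lambda_{N,j}=\{\nu: a_\nu w_\nu^{-\vartheta}>\eta j^{t'}\}$ can be taken downward closed. Where ties or the weight $w_\nu^{-\vartheta}$ break this, I will use the monotone majorant $\tilde a_\nu=\max_{\mu\ge\nu}a_\mu$ instead, which remains in a weighted $\ell^p$ class. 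Only $j$ with $j^{-t'}\gtrsim\eta$ contribute, so I restrict to $j\le N$.

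\emph{Counting.} The sequence $(b_{\nu,j})$ lies in weak-$\ell^q$ with $1/q=\min\{1/p,\,t'\}$, up to a logarithmic factor when the two terms coincide. To see this, count
\begin{align*}
  \#\{(\nu,j):b_{\nu,j}>\eta\}=\sum_j\#\{\nu:a_\nu>\eta j^{t'}\}\lesssim \sum_{j\lesssim \eta^{-1/t'}} (\eta j^{t'})^{-p}.
\end{align*}
If $pt'>1$, the sum is $\lesssim\eta^{-p}$. If $pt'<1$, it is $\lesssim \eta^{-p}\eta^{-(1-pt')/t'}=\eta^{-1/t'}$. In both cases the decreasing rearrangement satisfies $b^*_k\lesssim k^{-\min\{1/p,t'\}}$, with any logarithmic factor absorbed into $\delta$.

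\emph{$L^2$ error.} By the Riesz property of $\Phi$ and orthonormality of $(L_\nu)$,
\begin{align*}
  \|G_0-G_N\|_{L^2}^2\simeq\sum_{(\nu,j)\notin\Lambda}|g_{\nu,j}|^2\le\sum_{k>N}(b_k^*)^2\lesssim N^{-2\min\{1/p-1/2,\,t'\}}
\end{align*}
(the $t'$ branch gives $2t'-1$; see below). The exponent $1/p-1/2$ is close to $s-1/2$. For the output branch, the bound $b^*_k\lesssim k^{-t'}$ is too crude: it only yields $N^{-(t'-1/2)}$. The fix is to use square-summability in $j$ directly: the tail over $j>J$ is at most $\sum_\nu J^{-2t'}\|g_\nu\|_{\mathcal Y^{t'}}^2\lesssim J^{-2t'}$, while for $j\le J$ one uses the $\ell^p$ tail in $\nu$ with budget $N/J$ per $j$. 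This gives the error bound
\begin{align*}
  J\,(N/J)^{-2(1/p-1/2)}+J^{-2t'}.
\end{align*}
Optimizing over $J$ gives $N^{-\min\{1/p-1/2,t'\}}$ up to $\delta$, where the weight $w_\nu^{-\vartheta}$ is used to sum the $J$ copies.

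\emph{$L^\infty$ error.} Since $\|P_\nu\|_\infty\le1$ and $\|\sum_j c_j\phi_j\|_{\mathcal Y}\lesssim\|c\|_{\ell^2}$, we have
\begin{align*}
  \|G_0-G_N^\infty\|_\infty\lesssim \sum_\nu \Big(\sum_{j\notin}|g^\infty_{\nu,j}|^2\Big)^{1/2}.
\end{align*}
Here $w_\nu^{1/2}$ is absorbed by taking $\vartheta>1/2$. The same $J$-splitting then gives $N^{-\min\{1/p-1,t'\}}$.

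\emph{Main obstacle.} The expected hard part is this balancing: obtaining exactly $\min\{s-1/2,t\}$ and $\min\{s-1,t\}$ rather than a weakened combination of the two exponents, while keeping the $\Lambda_{N,j}$ downward closed and the total cardinality at most $N$. As stated before the theorem, a faithful alternative is to take the index sets and bounds directly from the Legendre part of the proof of \parencite[Theorem 6]{herrmann_neural_2024}.
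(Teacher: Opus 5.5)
\textbf{There is a genuine gap, and it lies exactly in the balancing step you flag as the main obstacle: the $J$-splitting cannot produce $\min\{1/p-1/2,t'\}$.}

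With $\alpha:=1/p-1/2$, minimizing your bound $J(N/J)^{-2\alpha}+J^{-2t'}$ over $J$ gives a squared error of order $N^{-4\alpha t'/(1+2\alpha+2t')}$. This is a fixed loss in the exponent, not something absorbed into $\delta$: for $\alpha=t'=1$ the norm decays like $N^{-2/5}$ instead of $N^{-1}$.

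The factor $J$ is actually spurious. Since $\sum_{j\le J}|g_{\nu,j}|^2\le\|g_\nu\|_{\mathcal Y^{t'}}^2\lesssim a_\nu^2$, one can use a single set $\Lambda_{N/J}$ for all $j\le J$ at squared cost $(N/J)^{-2\alpha}$. But even then the optimum of $(N/J)^{-2\alpha}+J^{-2t'}$ is $N^{-2\alpha t'/(\alpha+t')}$, i.e.\ the rate $\alpha t'/(\alpha+t')<\min\{\alpha,t'\}$. Any allocation giving every output index $j\le J$ the same budget produces this weakened exponent. The weight $w_\nu^{-\vartheta}$ cannot help: $w_\nu^{-\vartheta}a_\nu$ is just another $\ell^p$ sequence whose rearrangement decays like $i^{-1/p}$ and no faster. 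In $L^\infty$ the same computation gives the rate $\beta t'/(\beta+t')$ with $\beta=1/p-1$.

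Your first route (greedy selection by the entrywise majorant $b_{\nu,j}$) is correct, but it proves the claim only when $pt'>1$. For $t'<1/p$ the bound $b^*_k\lesssim k^{-t'}$ gives at most $N^{-(t'-1/2)}$, because the entrywise estimate discards the $\ell^2$-structure in $j$. Moreover, the allocation the threshold rule $b_{\nu,j}>\eta$ induces, namely $j\le J_\nu\approx(w_\nu^{-\vartheta}a_\nu/\eta)^{1/t'}$, is itself the wrong one. Even with the sharper tail estimate $\|g_\nu-\sum_{j\le J}g_{\nu,j}\phi_j\|_{\mathcal Y}\lesssim J^{-t'}w_\nu^{-\vartheta}a_\nu$, it only yields $N^{-t'(1-p/2)}$.

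\textbf{What is missing is a $\nu$-dependent output truncation with Lagrange-optimal levels.} This is the same optimization already solved in the paper's metric-entropy appendix.

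Let $c_i:=C'i^{-1/p}$. By item~\ref{item: 2} and $a\in\ell^p$, this majorizes $Ca_{\nu_i}$, so $\|g_{\nu_i}\|_{\mathcal Y^{t'}}\le c_i$, and also $\|g^\infty_{\nu_i}\|_{\mathcal Y^{t'}}\le c_i$ once $\vartheta\ge1/2$. Keep $i\le M$ and truncate the $i$-th coefficient at its own output level $J_i$. Since $\|y-\sum_{j\le J}\langle y,\widetilde\phi_j\rangle_{\mathcal Y}\phi_j\|_{\mathcal Y}\le D_\Phi J^{-t'}\|y\|_{\mathcal Y^{t'}}$,
\begin{align*}
  \|G_0-G_N\|_{L^2(\gamma,\mathcal Y)}^2\le D_\Phi^2\Big(\sum_{i\le M}J_i^{-2t'}c_i^2+\sum_{i>M}c_i^2\Big),
  \qquad
  \sum_{j}|\Lambda_{N,j}|=\sum_{i\le M}J_i .
\end{align*}
Take $J_i=\lceil(c_i/\varepsilon_i)^{1/t'}\rceil$, so that $J_i^{-t'}c_i\le\varepsilon_i$. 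The cost is then at most $M+\sum_{i\le M}(c_i/\varepsilon_i)^{1/t'}$. This is precisely the problem of Lemma~\ref{lemma: constr opt prob L2} with $(\alpha,\beta)=(1/p,t')$ and $M\asymp\varepsilon^{-1/(1/p-1/2)}$. It gives error $\lesssim\varepsilon$ at cost $\lesssim\varepsilon^{-1/\min\{1/p-1/2,t'\}-\tau}$, which inverts to the claimed $L^2$ rate after taking $p$ near $1/s$ and $t'$ near $t$.

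For $L^\infty$, using $\|P_\nu\|_\infty\le1$, the error is $\lesssim\sum_{i\le M}J_i^{-t'}c_i+\sum_{i>M}c_i$, and Lemma~\ref{lemma: constr opt prob Linfty} does the same job.

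The resulting levels $J_i\propto i^{-2/(p(1+2t'))}$ (respectively $i^{-1/(p(1+t'))}$) are nonincreasing in $i$. Hence $\Lambda_{N,j}=\{\nu_i: i\le M,\ J_i\ge j\}$ is an initial segment of the enumeration and is downward closed by item~\ref{item: 2}, and $J_1\le N$ ensures only $j\le N$ occur.

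This also removes your monotone-majorant fallback, which is unjustified as stated. For a general sequence in $\ell^p(\mathcal F)$, $\max_{\mu\ge\nu}a_\mu$ need not be $p$-summable, since a single value at $\mu$ is copied to all $\prod_j(1+\mu_j)$ indices below $\mu$. Likewise, item~\ref{item: 2} alone does not make superlevel sets of $w_\nu^{-\vartheta}a_\nu$ downward closed.

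The paper itself gives no argument beyond citing \parencite[Theorem 6]{herrmann_neural_2024}. Your closing deferral to that source therefore matches the paper, but the self-contained argument you wrote does not establish the theorem.
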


\section{Metric Entropy Upper and Lower Bounds}\label{section: metric entropy}

We first establish quantitative metric entropy estimates for the class of holomorphic functionals $\mathcal H^{s}(L; \C)$ and the class of holomorphic operators $\mathcal H^{s,t}(L; \mathcal Y_{\C})$, in both the corresponding $L^2$- and $L^\infty$-losses. The upper bounds will follow from the polynomial approximation results and the Legendre coefficient estimates of Section \ref{subsection: approximation rate}, whereas the lower bounds are obtained from the construction of explicit packings of the corresponding classes.

The relative compactness of $\mathcal H^{s,t}(L; \mathcal Y_{\C})$ with respect to $L^\infty (\operatorname{supp}(\gamma), \mathcal Y)$ and $L^2(\gamma, \mathcal Y)$ is proved in Theorem \ref{appendix: compactness} and ensures that the metric entropy is finite. Although the metric entropy estimates in Proposition \ref{proposition: L2 metric entropy UB}, Proposition \ref{proposition: Linfty metric entropy UB}, Proposition \ref{proposition: L2 metric entropy LB} and Proposition \ref{proposition: Linfty metric entropy LB} are of independent interest, they will be used to derive the minimax lower bounds in Section \ref{subsection: LB holomorphic functionals}, \ref{subsection: GWN lower bounds} and \ref{subsection: RWN lower bounds}.

\subsection{Metric Entropy Upper Bounds}

We begin with the $L^2$ metric entropy upper bounds. The construction of the covering is performed in two steps. First, we truncate the $\mathcal Y$-valued Legendre polynomial chaos expansion using an a priori chosen index set $\Lambda_N$, such that the squared $L^2$-error is at most $\varepsilon^2/2$. We then cover each of the $N$ coefficients with balls of radii $\varepsilon_1, \dots, \varepsilon_N$. By Theorem \ref{theorem: coefficient bounds}, we have explicit quantitative control of the size of each of the coefficients. Since the tensorized Legendre polynomials form an orthonormal basis, the resulting $L^2$-error from discretizing the coefficients is bounded by $\sum_{i=1}^N \varepsilon_i^2$. Any choice of $\varepsilon_1, \dots, \varepsilon_N$ such that this sum is upper bounded by $\varepsilon^2/2$ then produces an $\varepsilon$-covering. Optimizing these $\varepsilon_i$ yields the stated $L^2$ entropy bounds.

\begin{proposition}[$L^2$-metric entropy upper bounds]\label{proposition: L2 metric entropy UB}
    Let $s>1$, $t>0$ and $L>0$. Suppose that Assumption \ref{assumption: measure} holds. Then, for every $\tau>0$ and all sufficiently small $\varepsilon>0$ (cf.~\eqref{eq:covering})
    \begin{equation*}
        \log N(\varepsilon, \mathcal H^s(L; \C), \|\cdot\|_{L^2(\gamma)}) \lesssim \varepsilon^{-\frac{1}{s-1/2} - \tau}.
    \end{equation*}
    Moreover, setting $\kappa_2 = \min \{s-1/2, t\}$, we have
    \begin{equation*}
         \log N(\varepsilon, \mathcal H^{s,t}(L; \mathcal Y_{\C}), \|\cdot\|_{L^2(\gamma, \mathcal Y)}) \lesssim \varepsilon^{-\frac{1}{\kappa_2} - \tau}.
    \end{equation*}
    The implicit constants are independent of $\varepsilon$, but may depend on parameters $s, t, L, \Psi, \mathcal O_{\C}$ and $\mathcal D_{\C}$ which determine $\mathcal H^{s}(L; \C)$ and $\mathcal H^{s,t}(L; \mathcal Y_{\C})$.
\end{proposition}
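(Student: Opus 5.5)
The plan follows the two-step strategy outlined before the statement: truncate, then discretize coefficients. We use the isometry $U$ from \eqref{eq: isometric isomorphism} and work with $F=G\circ\sigma_s$ in $L^2(Q,\mathcal Y,\pi)$. There the tensorized Legendre polynomials are orthonormal. Moreover, because $\Phi$ is a Riesz basis, $\|\sum_{\nu,j} c_{\nu,j}L_\nu\phi_j\|_{L^2}^2\simeq\sum_{\nu,j}|c_{\nu,j}|^2$.

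\textbf{Functional case.} Fix $p\in(1/s,1)$ close to $1/s$ and $\vartheta=0$ in Theorem \ref{theorem: coefficient bounds}. This gives $|g_\nu|\le Ca_\nu$ uniformly over the class, with $a_{\nu_i}\lesssim i^{-1/p}$ along the monotone enumeration. Theorem \ref{theorem: realvalued holomorphic approximation}, or directly Stechkin, gives
\[
\|g-g_N\|_{L^2}\lesssim N^{-(s-1/2)+\delta}.
\]
Choose $N\simeq\varepsilon^{-1/(s-1/2-\delta)}$ so that the tail is at most $\varepsilon/\sqrt2$. For each $i\le N$, the real coefficient $g_{\nu_i}$ lies in $[-Ca_{\nu_i},Ca_{\nu_i}]$; we cover this interval by a grid of mesh $\varepsilon/\sqrt{2N}$. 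The number of grid points is at most $1+2Ca_{\nu_i}\sqrt{2N}/\varepsilon\lesssim N^{1/2}\varepsilon^{-1}$. Since the truncated functions are $N$-dimensional and orthonormality gives an error of at most $\varepsilon/\sqrt2$, the grid yields an $\varepsilon$-cover. Its log-cardinality is $\lesssim N\log(N/\varepsilon)\lesssim\varepsilon^{-1/(s-1/2-\delta)}\log(1/\varepsilon)$. Choosing $\delta$ small absorbs the logarithm and the exponent loss into $\tau$. Centers outside the class are harmless: covering numbers with external centers change only by a factor of $2$ in $\varepsilon$, via the packing–covering inequalities.

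\textbf{Operator case.} We use the index sets $(\Lambda_{N,j})_{j\le N}$ of Theorem \ref{theorem: operator approximation rate}. They are a priori, independent of $G$, and have total cardinality $\le N$. They give a tail $\le C_2N^{-\kappa_2+\delta}\le\varepsilon/\sqrt2$ for $N\simeq\varepsilon^{-1/(\kappa_2-\delta)}$. The scalar coefficients $g_{\nu,j}$ are uniformly bounded over the class: $|g_{\nu,j}|\lesssim\|g_\nu\|_{\mathcal Y}\le\|F\|_{L^2}\le L$ via the dual Riesz basis. This crude bound suffices, because we discretize each of the at most $N$ retained coefficients on $[-CL,CL]$ with mesh $\varepsilon/(c_\Phi\sqrt{2N})$, where $c_\Phi$ is the Riesz constant. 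The Riesz-basis equivalence then bounds the discretization error by $\varepsilon/\sqrt2$. The log-cardinality is again $\lesssim N\log(N/\varepsilon)\lesssim\varepsilon^{-1/\kappa_2-\tau}$. If one wants to avoid relying on the Riesz constant, one can instead cover in the coordinates $g_{\nu,j}$ and use $\|\sum c_{\nu,j}L_\nu\phi_j\|^2\le B\sum|c_{\nu,j}|^2$, with $B$ the upper Riesz bound.

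\textbf{Main obstacle.} The only delicate point is that the index sets and the coefficient bounds must be uniform over the whole class, so that a single finite grid works for every $G$. Theorems \ref{theorem: coefficient bounds} and \ref{theorem: operator approximation rate} supply exactly this uniformity, with constants independent of $G$. Once uniformity holds, the logarithmic factor coming from the grid resolution is the only loss, and it is absorbed into the arbitrarily small $\tau$.
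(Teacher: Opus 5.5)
Your proof is correct. In the functional case it is essentially the paper's argument: truncate along the $G$-independent enumeration of Theorem~\ref{theorem: coefficient bounds}, then grid each retained coefficient with mesh $\varepsilon/\sqrt{2N}$. The paper uses the decaying ranges $|g_{\nu_i}|\le Ci^{-s'}$ and Stirling to get $\log N\lesssim N$ with no logarithm, whereas you accept a factor $\log(N/\varepsilon)$ and absorb it into $\tau$. Taking $\vartheta=0$ is harmless since $w_\nu\ge 1$.

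In the operator case your route is genuinely different. The paper truncates only in the multi-index, at $N\asymp\varepsilon^{-1/(s'-1/2)}$, and keeps the full $\mathcal Y$-valued coefficients, which lie in $\mathcal Y^{t'}$-balls of radius $Ci^{-s'}$. It covers these ellipsoids via Kolmogorov's entropy $(i^{-s'}/\varepsilon_i)^{1/t'}$ and distributes the radii $\varepsilon_i$ through the Lagrange problem of Lemma~\ref{lemma: constr opt prob L2}; $\min\{s-1/2,t\}$ emerges from the three regimes of $S_N$. You instead delegate this input/output allocation to the anisotropic index sets $(\Lambda_{N,j})$ of Theorem~\ref{theorem: operator approximation rate}. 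The problem then becomes genuinely finite-dimensional, with $N\simeq\varepsilon^{-1/(\kappa_2-\delta)}$ real coefficients, each bounded by $|g_{\nu,j}|\le\sup_{c\in Q}|\langle F(c),\widetilde\phi_j\rangle_{\mathcal Y}|\le L$. A uniform grid plus the upper Riesz bound finishes the argument. This is shorter and avoids ellipsoid entropy and the optimization entirely.

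The cost is reliance on the imported approximation theorem, and in particular on its index sets being independent of $G$. Theorem~\ref{theorem: operator approximation rate} as stated only asserts uniformity of the constants, though the sets are $G$-independent in the source construction, and the paper itself uses this in Theorem~\ref{theorem: statistical upper bounds operator}. If you want to avoid that dependence, you can build such $G$-independent sets yourself from $\sum_j |g_{\nu_i,j}|^2 j^{2t'}\le C^2 i^{-2s'}$ (Theorem~\ref{theorem: coefficient bounds}), by assigning roughly $i^{-2s'/(1+2t')}$ output modes to $\nu_i$. This is exactly the number of modes the paper's optimal $\varepsilon_i$ resolve in each ellipsoid, so the two proofs do the same bookkeeping at different stages.
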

\begin{proof}
We split the proof of the proposition into two parts, first proving the upper bounds for the class of holomorphic functionals $\mathcal H^s(L, \C)$ and then for the class of holomorphic operators $\mathcal H^{s,t}(L, \mathcal Y_{\C})$.
    \paragraph{Metric entropy upper bounds for $\mathcal H^s(L;\C)$.}
    Let $\delta>0$ be sufficiently small and set $s' = s- \delta < s$, such that $\frac{1}{s'-1/2}\leq \frac{1}{s-1/2}+ \tau$. Let $g\in \mathcal H^s(L;, \C)$ and let $g_\nu$ denote its scalar-valued Legendre coefficients which are defined as in \eqref{eq: Y-valued Legendre coefficients}. 
    By Theorem \ref{theorem: coefficient bounds}, there exists a constant $C>0$ and an enumeration $(\nu_i)_{i\in \N}$, both independent of $g\in \mathcal H^s(L;, \C)$, such that $|g_{\nu_i}|\leq Ci^{-s'}$ for all $i\in \N$. For the index set $\Lambda_N = \{\nu_1, \dots, \nu_N\}\subset \mathcal F$, Theorem \ref{theorem: realvalued holomorphic approximation} and Parseval's identity imply
    \begin{equation}\label{eq:sumgnubound}
      \left\|g - \sum_{\nu \in \Lambda_N} g_\nu L_\nu \circ(\sigma_s)^{-1}\right\|_{L^2(\mathcal X, \gamma)}^2  =\left\|g \circ \sigma_s - \sum_{\nu \in \Lambda_N} g_\nu L_\nu \right\|_{L^2(Q, \pi)}^2
      =\sum_{\nu\notin\Lambda_N} g_\nu^2
        \leq C_{2} N^{-(2s'-1)},
    \end{equation}
    where we used \eqref{eq: isometric isomorphism} in the first equality.
    Choose $N \asymp \varepsilon^{-\frac{2}{2s'-1}}$ such that the last expression is smaller than $\varepsilon^2/2$.
    It remains to discretize the coefficients $\{g_{\nu_i}: i= 1, \dots, N\}$. 
    For $i= 1, \dots, N$, cover the interval $[-Ci^{-s'}, Ci^{-s'}]$ by intervals of length $\varepsilon_i>0$, with the corresponding covering number satisfying
    \begin{equation}\label{eq:upboundNi}
        N_i \coloneqq N(\varepsilon_i, [-Ci^{-s'}, Ci^{-s'}], |\cdot|)  \lesssim \left(\frac{i^{-s'}}{\varepsilon_i}\right).
    \end{equation}
    Since the Legendre polynomials $L_\nu \circ \sigma_s^{-1}$ are orthonormal in $L^2(\mathcal X, \gamma)$, it suffices to require that 
    \begin{equation*}
        \sum_{i=1}^N \varepsilon_i^2 \leq \frac{\varepsilon^2}{2}.
    \end{equation*}
    Up to constants, minimizing the upper bound of the cardinality $\prod_{i=1}^N N_i$ of the covering is equivalent to minimizing the sum of the logarithms, subject to the given constraint on the $\varepsilon_i$, i.e. 
    \begin{equation*}
        \min_{\varepsilon_1, \dots, \varepsilon_N} \sum_{i=1}^N \log\left(\left(\frac{i^{-s'}}{\varepsilon_i}\right)\right), \quad \text{s.t.} \sum_{i=1}^N \varepsilon_i^2 \leq \frac{\varepsilon^2}{2}.
    \end{equation*}
    The resulting minimization problem is solved for $\varepsilon_i = \varepsilon/\sqrt{2N}$. For each $i=1, \dots, N$, let $\mathcal C_i \coloneqq \{g_{\nu_i, j_i}: j_i =1, \dots, N_i\}$ be an $\varepsilon_i$-net for the $i$-th coefficient. Hence, for each $g\in \mathcal H^{s}(L; \C)$ and for each $i=1, \dots, N$, there exist a $g_{\nu_i, j_i} \in \mathcal C_i$, such that with \eqref{eq:upboundNi}
    \begin{equation*}
      \left\|g - \sum_{i=1}^N g_{\nu_i, j_i} L_\nu \circ(\sigma_s)^{-1}\right\|_{L^2(\gamma)}^2
      \le \sum_{i=1}^N \varepsilon_i^2+\sum_{\nu\notin\Lambda_N}g_{\nu}^2\le
      \varepsilon^2.
    \end{equation*}
    By the choice of $N$, it holds that $\varepsilon/\sqrt N \asymp N^{-s'}$ and $i^{-s'}/\varepsilon_i \asymp (N/i)^{s'}$.
    Applying the elementary inequality $\sum_{j=1}^N \log(j) \geq N \log(N) - N$, (which follows from Stirling's approximation),
    \begin{align}\label{eq: sterling}
        \log N(\varepsilon, \mathcal H^s(L; \C), \|\cdot\|_{L^2(\gamma)}) &\leq \sum_{i=1}^N\log(N_i) \\
        &\lesssim \sum_{i=1}^N \Big(\log\left(\left(\frac{N}{i}\right)^{s'}\right)\Big)\nonumber\\
        &\leq s'\Big(N\log(N)- \sum_{i=1}^N\log(i)\Big)\nonumber\\
        &\lesssim N\\
        &\lesssim \varepsilon^{-\frac{1}{s-1/2} - \tau}\nonumber,
    \end{align}
    which proves the metric entropy upper bound for $\mathcal H^{s}(L; \C)$.
    \paragraph{Metric entropy upper bounds for $\mathcal H^{s,t}(L;\mathcal Y_\C)$.} The idea for the construction of the covering is the same as in the scalar-valued case, so we only describe the differences in the proof to the first case here. Fix $\tau>0$ and choose $\delta>0$ sufficiently small and set $s' = s- \delta$ and $t' = t- \delta$ so that $s'>1, t'>0$ and 
    \begin{equation*}
        \frac{1}{\min\{s'-1/2, t'\}} \leq \frac{1}{\min\{s-1/2,t\}} + \tau.
    \end{equation*}
    By Theorem \ref{theorem: coefficient bounds}, after enumerating $(\nu_i)_{i\in \N}$, there exists a constant $C>0$ and $\mathcal Y$-valued Legendre coefficients $g_\nu$ satisfying
    \begin{equation*}
        \|g_{\nu_i}\|_{\mathcal Y^{t'}} \leq C i^{-s'}.
    \end{equation*}
    As for the holomorphic functionals, we truncate the Legendre polynomial chaos expansion after $N$ terms and choose $N\asymp \varepsilon^{-\frac{2}{2s'-1}}$ such that the squared $L^2$ truncation error is smaller than $\varepsilon^2/2$. For each $i = 1, \dots, N$, define $B_{s',t', i} = \{y\in \mathcal Y, \|y\|_{\mathcal Y^{t'}}\leq Ci^{-s'}\}$, which are balls in $\mathcal Y^{t'}$ of radius $Ci^{-s'}$. Since balls in $\mathcal Y^{t'}$ are ellipsoids with respect to the $\|\cdot\|_{\mathcal Y}$-norm, for which the metric entropy is known \parencite{kolmogorov1959entropy}, 
    \begin{equation}\label{eq: metric entropy coefficient operator}
        \log(N_i) \coloneqq  \log N(\varepsilon_i, B_{s',t', i}, \|\cdot\|_{\mathcal Y} ) \asymp \left(\frac{i^{-s'}}{\varepsilon_i}\right)^{1/t'}.
    \end{equation}
    As in the scalar-valued case, it suffices that we require $\sum_{i=1}^N \varepsilon_i^2 \leq \varepsilon^2/2$. Let $\mathcal C_i = \{g_{\nu_i, j_i}: j_i = 1, \dots N_i\}$ denote a minimal $\varepsilon$-covering of $B_{s',t',i}$ and set 
    \begin{equation*}
        \mathcal C = \left\{ \sum_{i=1}^N g_{\nu_i, j_i}L_{\nu_i}\circ \sigma_s^{-1}: j_i =1, \dots, N_i\right\}.
    \end{equation*}
    Then \begin{equation}\label{eq: coveringcardinality}
        |\mathcal C|\leq \prod_{i=1}^N N_i.
    \end{equation}
    
    For each $i = 1, \dots, N$, there exists a specific $k_i \in \{1, \dots, N_i\}$, such that $\|g_{\nu_i} - g_{\nu_i, k_i}\|_{\mathcal Y} \leq \varepsilon_i$. By the orthonormality of $(L_\nu \circ \sigma_s^{-1})_{\nu \in \mathcal F}$ in $L^2(\gamma)$ and since $L^2(\gamma, \mathcal Y) \simeq L^2(\gamma)\otimes \mathcal Y$, it holds that 
    \begin{equation*}
        \left\|G - \sum_{i=1}^N g_{\nu_i, k_i}L_{\nu_{i}}\circ \sigma_s^{-1}\right\|_{L^2(\gamma, \mathcal Y)}^2 \leq \sum_{\nu \not \in \Lambda_N} \|g_\nu\|_{\mathcal Y}^2 + \sum_{i=1}^N \varepsilon_i^2 \leq \varepsilon^2.
    \end{equation*}
    
    Minimizing the right-hand side of \eqref{eq: coveringcardinality} is equivalent to the minimization problem \begin{equation}\label{eq: minimisation L2}
        \min_{\varepsilon_1, \dots, \varepsilon_N} \sum_{i=1}^N \left(\frac{i^{-s'}}{\varepsilon_i}\right)^{1/t'}, \quad \text{s.t.} \sum_{i=1}^N \varepsilon_i^2 \leq \frac{\varepsilon^2}{2}.
    \end{equation}
    Set 
    \begin{equation*}
        S_N \coloneqq \sum_{i=1}^N i^{-\frac{2s'}{1+ 2t'}}.
    \end{equation*}
    Using Lagrange-multipliers and applying Lemma \ref{lemma: constr opt prob L2}, with $\alpha = s'$ and $\beta = t'$, the minimization problem is solved by
    \begin{equation*}
        \varepsilon_i = \frac{\varepsilon i^{-\frac{s'}{1+2t'}}}{\sqrt{2S_N}}.
    \end{equation*}
    
    Plugging the optimal choice of $\varepsilon_i$ into \eqref{eq: minimisation L2} (see also Lemma \ref{lemma: constr opt prob L2}) and using $N\asymp \varepsilon^{-\frac{2}{2s'-1}}$ yields 
    \begin{equation*}
        \sum_{i=1}^N \left(\frac{i^{-s'}}{\varepsilon_i}\right)^{1/t'} = \left(\frac{\sqrt{2}}{\varepsilon}\right)^{1/t'} S_N^{\frac{1+ 2t'}{2t'}}.
    \end{equation*}
    By integral comparison, 
    \begin{align*}
        S_N \lesssim \begin{cases}
            N^{1 - \frac{2\alpha}{1+2\beta}}     &2\alpha <1+2\beta\\
            \log(N)                     &2\alpha = 1+2\beta\\
            1                           &2\alpha > 1+ 2\beta
        \end{cases},
    \end{align*}
    and it follows that the metric entropy upper bound satisfies
    \begin{equation*}
        \log N(\varepsilon, \mathcal H^{s,t}(L; \mathcal Y_{\C}), \|\cdot\|_{L^2(\gamma, \mathcal Y)}) \leq \log(|\mathcal C|) \leq \sum_{i=1}^N \log(N_i) \lesssim \varepsilon^{-\frac{1}{\kappa_2} -\tau}.
    \end{equation*}
\end{proof}
The proof of the $L^\infty$-metric entropy bounds follows the same truncate-then-discretize strategy as in the $L^2$-case, with two main differences. Firstly, we use an expansion w.r.t. the $L^\infty$-normalized Legendre polynomials $P_\nu$ with Legendre coefficients $g_\nu^\infty \in \mathcal Y$, such that the corresponding $N$-term truncation converges with rate by $s-1$. Secondly, we use the fact that $\|P_\nu\|_\infty \leq 1$ and the triangle inequality for the discretization error for the Legendre coefficients instead of the orthonormality of $(L_\nu \circ \sigma_s^{-1})_{\nu \in \mathcal F}$ in $L^2(\gamma)$ and Parseval's identity. Apart from these differences, the construction of the covering and the optimization of the upper-bounds follows essentially the same steps as the $L^2$-case. The details of the proof can be found in Appendix~\ref{appendix: proof of Linfty metric entropy UB}.

\begin{proposition}[$L^\infty$-metric entropy upper bounds]\label{proposition: Linfty metric entropy UB}
    Let $s>1$, $t>0$ and $L>0$ and let $\tau>0$ be arbitrarily small. Then 
    \begin{equation*} 
        \log N\left(\varepsilon, \mathcal H^s(L; \C), \|\cdot\|_{\infty}\right) \lesssim \varepsilon^{-\frac{1}{s-1} - \tau}. 
    \end{equation*} 
    Moreover, setting $\kappa_{\infty} = \min\{s-1,t\}$, we have 
    \begin{equation*}
        \log N\left(\varepsilon, \mathcal H^{s,t}(L; \mathcal Y_{\C}), \|\cdot\|_{\infty, \mathcal Y}\right) \lesssim 
        \varepsilon^{-\frac{1}{\kappa_{\infty}} - \tau}.
    \end{equation*} 
\end{proposition}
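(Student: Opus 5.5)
We follow the truncate-then-discretize scheme from the proof of Proposition~\ref{proposition: L2 metric entropy UB}, replacing the orthonormal expansion by the $L^\infty$-normalized expansion $G\circ\sigma_s=\sum_\nu g_\nu^\infty P_\nu$. Fix $\tau>0$ and choose $\delta>0$ small. Set $s'=s-\delta$ and $t'=t-\delta$ so that $1/\min\{s'-1,t'\}\le 1/\kappa_\infty+\tau$.

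Apply Theorem~\ref{theorem: coefficient bounds} with $\vartheta=1/2$, which absorbs the factor $w_\nu^{1/2}$. This gives a $G$-independent enumeration $(\nu_i)$ with $\|g^\infty_{\nu_i}\|_{\mathcal Y^{t'}}\le C a_{\nu_i}\lesssim i^{-1/p}$. Taking $p$ close to $1/s$, this reads $\|g^\infty_{\nu_i}\|_{\mathcal Y^{t'}}\le C i^{-s'}$. Since $\|P_\nu\|_\infty\le 1$ and the expansion converges uniformly (item~\ref{item: 3}), the tail satisfies
\[
\sup_{c\in Q}\Big\|\sum_{i>N} g^\infty_{\nu_i}P_{\nu_i}(c)\Big\|_{\mathcal Y}\lesssim\sum_{i>N} i^{-s'}\lesssim N^{-(s'-1)}.
\]
This is also Theorem~\ref{theorem: realvalued holomorphic approximation} and Theorem~\ref{theorem: operator approximation rate} for $\Lambda_N=\{\nu_1,\dots,\nu_N\}$. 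Choose $N\asymp\varepsilon^{-1/(s'-1)}$ so that this tail is at most $\varepsilon/2$.

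Next we discretize. By the triangle inequality and $\|P_\nu\|_\infty\le1$, it suffices to cover the $i$-th coefficient within $\varepsilon_i$ in $\|\cdot\|_{\mathcal Y}$ (in $|\cdot|$ in the scalar case), subject to $\sum_{i\le N}\varepsilon_i\le\varepsilon/2$.

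\emph{Scalar case.} Take $\varepsilon_i=\varepsilon/(2N)$. Then $\log N_i\lesssim\log(N i^{-s'}/\varepsilon)\lesssim\log(N^{s'}/i^{s'})+O(1)$, using $\varepsilon\asymp N^{-(s'-1)}$. Summing over $i$ and using $\sum_{i\le N}\log i\ge N\log N-N$ gives $\sum_i\log N_i\lesssim N\lesssim\varepsilon^{-1/(s-1)-\tau}$.

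\emph{Operator case.} The entropy of $\mathcal Y^{t'}$-balls in $\mathcal Y$ gives $\log N_i\asymp(i^{-s'}/\varepsilon_i)^{1/t'}$. We then minimize $\sum_i(i^{-s'}/\varepsilon_i)^{1/t'}$ subject to $\sum_i\varepsilon_i\le\varepsilon/2$. A Lagrange multiplier computation (the $\ell^1$ analogue of Lemma~\ref{lemma: constr opt prob L2}) yields
\[
\varepsilon_i=\frac{\varepsilon\, i^{-s'/(1+t')}}{2T_N},\qquad T_N=\sum_{i\le N} i^{-s'/(1+t')},
\]
with optimal value $(2/\varepsilon)^{1/t'}T_N^{(1+t')/t'}$.

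The main step is then the case analysis of $T_N$:
\begin{itemize}
\item If $s'>1+t'$, then $T_N\lesssim1$ and the entropy is $\lesssim\varepsilon^{-1/t'}$.
\item If $s'<1+t'$, then $T_N\asymp N^{1-s'/(1+t')}$. The entropy is $\varepsilon^{-1/t'}N^{(1+t'-s')/t'}$. Substituting $N\asymp\varepsilon^{-1/(s'-1)}$, the exponent of $\varepsilon^{-1}$ is
\[
\frac1{t'}+\frac{1+t'-s'}{t'(s'-1)}=\frac{1}{s'-1}.
\]
\item If $s'=1+t'$, there is an extra logarithm, absorbed by $\tau$.
\end{itemize}
In all cases the entropy is $\lesssim\varepsilon^{-1/\min\{s'-1,t'\}}\le\varepsilon^{-1/\kappa_\infty-\tau}$.

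Two smaller points need care. The $\varepsilon_i$ must stay below the coefficient radii; otherwise we set $N_i=1$, which only helps. One must also check that the constants in Theorem~\ref{theorem: coefficient bounds} with weight $\vartheta=1/2$ are uniform over the class, which holds as stated there.
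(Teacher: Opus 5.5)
Your proposal is correct and follows the paper's proof essentially step by step. It has the same ingredients: truncation of the $L^\infty$-normalized expansion using $\|g^\infty_{\nu_i}\|_{\mathcal Y^{t'}}\lesssim i^{-s'}$ from Theorem~\ref{theorem: coefficient bounds}, the uniform choice $\varepsilon_i=\varepsilon/(2N)$ with the Stirling bound in the scalar case, and in the operator case the $\ell^1$-constrained Lagrange optimization (exactly Lemma~\ref{lemma: constr opt prob Linfty} with $\alpha=s'$, $\beta=t'$) followed by the three-case analysis of $T_N$.

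The one small deviation is that you bound the truncation tail directly by $\sum_{i>N}i^{-s'}$, using $\|P_\nu\|_\infty\le 1$ and uniform convergence, instead of citing the approximation theorems. This is slightly cleaner, because truncation and discretization then use the same index set $\{\nu_1,\dots,\nu_N\}$.
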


\subsection{Metric Entropy Lower Bounds}

We next derive matching lower bounds by constructing explicit packings of the classes of holomorphic functionals and holomorphic operators. 
Importantly, we will work with a class of maps restricted to (affine) linear operators. It turns out that this class already suffices to match corresponding upper bounds. To formalize this, set
\begin{align*}
\mathcal H_{\rm lin}^{s,t}(\mathcal O_{\C}, \mathcal D_{\C}, \mathcal Y_{\C}, L):=
\{G\in \mathcal H^{s,t}(\mathcal O_{\C}, \mathcal D_{\C}, \mathcal Y_{\C}, L)\,:\,G\text{ is affine linear}\},
\end{align*}
and analogous to Definitions \ref{def: holomorphic operator} and \ref{definition: holomorphic functional}, we also use the shorter notations
\begin{equation*}
\mathcal H_{\rm lin}^{s,t}(L;\mathcal{Y}_{\C}),\qquad
\mathcal H_{\rm lin}^{s}(L;\C),
\end{equation*}
where the latter space corresponds to the linear operators with $\mathcal{Y}_{\C}=\C$ (analogous to Definition \ref{definition: holomorphic functional}).

\begin{proposition}[$L^2$-metric entropy lower bounds]\label{proposition: L2 metric entropy LB}
    Let $s>1, t>0$ and $L>0$ and fix $N\in \N$. Let $\gamma$ be as in Assumption \ref{assumption: measure}. 
    Then, for all sufficiently small $\varepsilon>0$, 
    \begin{align}\label{eq: lower bound metric entropy functional L2}
         \log N(\varepsilon, \mathcal H^s(L; \C), \|\cdot\|_{L^2(\gamma)})   \ge \log N(\varepsilon, \mathcal H_{\rm lin}^s(L; \C), \|\cdot\|_{L^2(\gamma)}) \gtrsim \varepsilon^{-\frac{1}{s-1/2}}.
    \end{align}
    Moreover, setting $\kappa_2 = \min\{s-1/2, t\}$, we have
    \begin{align}\label{eq: lower bound metric entropy operator L2}
        \log N(\varepsilon, \mathcal H^{s,t}(L; \mathcal Y_{\C}), \|\cdot\|_{L^2(\gamma)})  \ge \log N(\varepsilon, \mathcal H^{s,t}_{\rm lin}(L; \mathcal Y_{\C}), \|\cdot\|_{L^2(\gamma, \mathcal Y)}) \gtrsim  \varepsilon^{-\frac{1}{\kappa_2}}.
    \end{align}
    The implicit constants are independent of $\varepsilon$.
\end{proposition}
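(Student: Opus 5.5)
The first inequality in each display is immediate, because $\mathcal H^{s}_{\rm lin}\subset\mathcal H^s$ and $\mathcal H^{s,t}_{\rm lin}\subset\mathcal H^{s,t}$, and covering numbers are monotone under inclusion. For the lower bounds we use $N(\varepsilon,T,d)\ge M(2\varepsilon,T,d)$ and build explicit packings of linear maps via the Varshamov--Gilbert bound. We work in the coefficient representation $c\in Q$. A linear map $G(\sigma_s(c))=\sum_j \theta_j c_j$ has Legendre expansion $\sum_j \theta_j\, c_j=\sum_j \theta_j 3^{-1/2}L_{e_j}(c)$. By the orthonormality of $(L_\nu)$ in $L^2(Q,\pi)$ and \eqref{eq: isometric isomorphism}, this gives $\|G\|_{L^2(\gamma)}^2=\frac13\sum_j|\theta_j|^2$.

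For membership we must check holomorphy and the bound on $\mathcal D_\C$. By Lemma \ref{lemma: weighted seq space} and Proposition \ref{proposition: extension to infdim ellipse}, $\sigma_s^{-1}(\mathcal D_\C)$ is bounded in $\ell^1_{-s}$, say by $R$. The map $z\mapsto\sum_j\theta_j z_j$ is then a bounded linear functional on $\ell^1_{-s}$ whenever $\sup_j|\theta_j| j^{s}\le L/R$. It extends to $\mathcal X_\C$ as $x\mapsto \sum_j \theta_j j^{s}\langle x,\widetilde\psi_j\rangle$, which is continuous on $\mathcal X_\C$ provided $(\theta_j j^s)\in\ell^2$. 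For finitely supported $\theta$ this is automatic, and the functional is holomorphic.

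In the scalar case, fix $m\in\N$ and consider the block $j\in\{m+1,\dots,2m\}$ with $\theta_j=\eta\,\omega_j\, m^{-s}$, where $\omega\in\{0,1\}^m$ and $\eta=L/(R2^s)$. Then $|\theta_j|j^s\le L/R$, so each such map lies in $\mathcal H^s_{\rm lin}(L;\C)$. Varshamov--Gilbert gives $\ge 2^{m/8}$ vectors $\omega$ with pairwise Hamming distance $\ge m/8$, and the corresponding maps are separated by $\gtrsim \eta m^{-s}\sqrt{m}=\eta m^{-(s-1/2)}$ in $L^2(\gamma)$. Choosing $m\asymp\varepsilon^{-1/(s-1/2)}$ gives $\log M(2\varepsilon)\gtrsim m\gtrsim\varepsilon^{-1/(s-1/2)}$.

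For operators, $\kappa_2=\min\{s-1/2,t\}$, and we produce two packings whose minimum exponent gives the claim. The first takes $G(\sigma_s(c))=\big(\sum_j\theta_j c_j\big)\phi_1$ and reduces to the scalar construction, noting $\|\phi_1\|_{\mathcal Y^t}<\infty$; it yields exponent $1/(s-1/2)$. The second takes $G(\sigma_s(c))=c_1\sum_{k=m+1}^{2m}\omega_k\,\eta\, m^{-t}\phi_k$. Here the $\mathcal Y^t_\C$-norm must be bounded uniformly on $\mathcal D_\C$. Since $\Phi$ is a Riesz basis, $\|\sum_k a_k\phi_k\|_{\mathcal Y^t}^2=\sum_k|a_k|^2k^{2t}\le \eta^2 m^{-2t}(2m)^{2t}m$.

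This is the main obstacle: the naive scaling gives an extra factor $\sqrt m$. To fix it, use amplitude $m^{-t-1/2}$ instead. This makes the $\mathcal Y^t$-norm $\lesssim\eta$, and $|z_1|\le R$ on $\sigma_s^{-1}(\mathcal D_\C)$, so $\eta$ can be chosen to give the bound $L$. The $L^2(\gamma,\mathcal Y)$ separation, computed with the Riesz lower bound, is then $\gtrsim m^{-t-1/2}\sqrt{m}=m^{-t}$, with $2^{m/8}$ elements. Taking $m\asymp\varepsilon^{-1/t}$ gives exponent $1/t$. The two packings together give $\max\{\varepsilon^{-1/(s-1/2)},\varepsilon^{-1/t}\}=\varepsilon^{-1/\kappa_2}$.

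The scalar construction needs the same care. There the sup-norm bound on $\mathcal D_\C$ is $\le R\sup_j|\theta_j|j^s$, so the bound is uniform and no $\sqrt m$ loss occurs. Finally, we would double-check that $\sigma_s^{-1}(\mathcal D_\C)$ being bounded in $\ell^1_{-s}$ is exactly what Definition \ref{def: holomorphic operator} provides. It is, via Lemma \ref{lemma: weighted seq space}: $\mathcal D_\C$ is bounded in $\mathcal X_{\ell^1,\C}$ and $\widetilde\sigma_s$ is an isometry, so $\sigma_s^{-1}(\mathcal D_\C)$ is bounded in $\ell^1_{-s}$.
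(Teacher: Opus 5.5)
Your proof is correct and follows essentially the same route as the paper. Both arguments use Varshamov--Gilbert hypercube packings of linear maps $z\mapsto\sum_j\theta_j z_j$ (tensored with $\phi_1$ in the operator case), compute the $L^2$-separation via $\int c_j^2\,d\pi=1/3$, get boundedness on $\mathcal D_\C$ from the $\ell^1_{-s}$-bound $R$, and, when $t$ is the binding exponent, use output vectors $\sum_k\omega_k\phi_k$ with amplitude $m^{-t-1/2}$ to avoid the $\sqrt m$ loss in the $\mathcal Y^t$-norm. The remaining differences are cosmetic: your dyadic block $\{m+1,\dots,2m\}$ with constant amplitude $m^{-s}$ versus the paper's weights $i^{-s}$ on $\{1,\dots,N\}$, your factor $c_1$ versus the paper's constant operators, and taking the maximum of two packings versus a case split on $\kappa_2$.
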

\begin{proof}
    We use a similar combinatorial argument for both the class of holomorphic functionals and the class of holomorphic operators. Based on the Varshamov--Gilbert lemma, see \parencite[Lemma 2.9]{tsybakov_introduction_2008} or \parencite[Example 3.1.4]{gine_mathematical_2021}, for every $N\in \N$, there exists a set $\Omega \subset \{-1,1\}^N$ with $|\Omega|\geq 3^{N/4}$, such that for any $\omega, \omega' \in \Omega$, the Hamming distance satisfies
    \begin{equation*}
        \rho(\omega, \omega') \coloneqq \sum_{i=1}^N \mathbbm 1_{\{\omega_i \neq \omega_i'\}} \geq \frac{N}{8}.
    \end{equation*}

    \paragraph{Metric entropy lower bounds for $\mathcal{H}_{\rm lin}^{s}(L;\C)$}
    For $\omega \in \Omega$, define 
    \begin{equation*}
        g_\omega(z) \coloneqq \delta \sum_{i=1}^N \omega_i ~\langle z, \widetilde\psi_i\rangle_{\mathcal X}, \qquad z \in \mathcal O_{\C},
    \end{equation*}
    where $\delta>0$ is some fixed constant which is chosen later. Clearly, $g_\omega$ is a continuous linear functional and hence holomorphic on $\mathcal O_{\C}$. To show that $g_\omega \in \mathcal H_{\rm lin}^{s}(L;\C)$, we need to show that $g_\omega$ is uniformly bounded on $\mathcal D_{\C}$. Since $\mathcal D_{\C}$ is bounded in $(\mathcal X_{\ell^1})_{\C}$, 
    \begin{equation*}
        R \coloneqq \sup_{z\in \mathcal D_{\C}} \|z\|_{\mathcal X_{\ell^1}}<\infty,
    \end{equation*}
    and therefore
    \begin{equation*}
        \sup_{z\in \mathcal D_{\C}}|g_\omega(z)| \leq \delta \sup_{z\in \mathcal D_{\C}}\sum_{i=1}^N\left|\langle z, \widetilde \psi_i\rangle_{\mathcal X}\right| \leq \delta R.
    \end{equation*}
    Choosing $\delta< L/R$ ensures that $g_\omega \in \mathcal H_{\rm lin}^{s}(L;\C)$ for any $\omega \in \Omega$ and $N\in \N$. 
    By \eqref{eq: input expansion}, 
    \begin{equation*}
        g_\omega (\sigma_s(c)) = \delta \sum_{i=1}^N \omega_i i^{-s} c_i, \qquad c\in Q.
    \end{equation*}
    For any $\omega, \omega' \in \Omega$ with $\omega \neq \omega'$, and since by Assumption \ref{assumption: measure}, it holds that $\gamma = (\sigma_s)_\#\pi$,
    \begin{align}\label{eq: L2 sep functional}
        \|g_\omega - g_{\omega'}\|_{L^2(\gamma)}^2 
        &= \delta^2 \|g_\omega\circ \sigma_s - g_{\omega'} \circ \sigma_s\|_{L^2(\pi)}\nonumber\\
        &= \frac{\delta^2}{3}\sum_{i=1}^N i^{-2s}(\omega_i - \omega_i')^2\nonumber \\
        &\geq \frac{\delta^2}{3} N^{-2s}\sum_{i=1}^N (\omega_i - \omega_i')^2\\
        & =  \frac{4\delta^2}{3}N^{-2s} \rho(\omega,\omega')\nonumber\\
        &\geq \frac{\delta^2}{6} N^{1-2s}.\nonumber
    \end{align}
    Here, we used that
    \begin{equation*}
        \int_{Q} c_i^2 ~d\pi(c) = \frac{1}{2}\int_{-1}^1 c_i^2 ~dc_i = \frac{1}{3}
    \end{equation*}
    in the second equality.
    Consequently, choosing $N \asymp \varepsilon^{-1/(s-1/2)}$ such that the last display equals $\varepsilon^2$ yields a packing in $\mathcal H^{s}(L; \C)$ and proves the lower bound \eqref{eq: lower bound metric entropy functional L2}. 

    \paragraph{Metric entropy lower bounds for $\mathcal H_{\rm lin}^{s,t}(L; \C)$.} Recall that $\kappa_2 = \min \{s-1/2, t\}$ and that $\Phi = (\phi_j)_{j\in \N}$ is a Riesz basis of $\mathcal Y$ with Riesz constants $D_\Phi> d_\Phi>0$. 
    
    Suppose first that $s-1/2 \leq t$, in which case $\kappa_2 = s-1/2$. Note that $d_\Phi \leq \|\phi_1\|_{\mathcal Y} = \|\phi_1\|_{\mathcal Y^{t}} \leq D_{\Phi}$. For $\omega \in \Omega$, define $G_\omega: \mathcal X_{\C} \to \mathcal Y_{\C}$ by
    \begin{equation*}
        G_\omega(z) \coloneqq \delta \sum_{i=1}^N \omega_i \langle z, \widetilde \psi_i\rangle_{\mathcal X} \phi_1.
    \end{equation*}
    As in the scalar-valued case, $G_\omega$ is holomorphic and we can bound
    \begin{equation*}
        \sup_{z\in \mathcal D_{\C}}\|G_\omega(z)\|_{\mathcal Y^{t}} \leq \delta R D_{\Phi}.
    \end{equation*}
    Choosing $\delta< L/RD_\Phi$ ensures that $G_\omega \in \mathcal H_{\rm lin}^{s,t}(L;\mathcal{Y}_{\C})$.
    Furthermore,  for $\omega$, $\omega'\in \Omega$, 
    \begin{equation*}
        G_\omega(z) - G_{\omega'}(z) = (g_\omega(z) - g_{\omega'}(z))\phi_1,
    \end{equation*}
    and thus, using $L^2(\gamma, \mathcal Y) \simeq L^2(\gamma)\otimes \mathcal Y$, the lower Riesz bound and \eqref{eq: L2 sep functional}
    \begin{equation*}
        \|G_\omega - G_{\omega'}\|^2_{L^2(\gamma, \mathcal Y)} = \|g_\omega - g_{\omega'}\|^2_{L^2(\gamma)} \|\phi_1\|_{\mathcal Y}^2 \geq \frac{d_\Phi^2 \delta^2}{6} N^{1-2s}.
    \end{equation*}
    Setting $N \asymp \varepsilon^{-1/(s-1/2)}$ such that last display is equal to $\varepsilon^2$, yields the desired metric entropy lower bound for $s-1/2 \leq t$.
    
    If $s-1/2>t$, it holds that $\kappa_2 = t$. Given $\omega \in \Omega$, define the constant operator $G_\omega: \mathcal X_{\C}\to \mathcal Y_{\C}$ by
    \begin{equation*}
        G_\omega(z) = \delta N^{-t-1/2}\sum_{j=1}^N \omega_j \phi_j,
    \end{equation*}
    which is holomorphic, and by the definition of the $\mathcal Y^t$-norm, 
    \begin{align*}
        \sup_{z\in \mathcal D_{\C}} \|G_\omega(z)\|_{\mathcal Y^t}^2
        & = \delta^2  N^{-2t-1}\left\|\sum_{j=1}^N \omega_j \phi_j\right\|_{\mathcal Y^t}^2\\
        &\leq \delta^2D_\Phi^2 N^{-2t-1} \sum_{j=1}^N j^{2t}\\
        &\leq \delta^2D_\Phi^2 N^{-2t-1}\left(1+ \int_1^N x^{2t}~dx\right)\\
        &\leq \delta^2D_\Phi^2,
    \end{align*}
    where we used an integral comparison bound in the second inequality. Choosing $\delta< L/( D_\Phi)$ ensures that $G_\omega\in\mathcal H_{\rm lin}^{s,t}(L;\mathcal Y_\C)$. Let $\omega, \omega'\in \Omega$ with $\omega \neq \omega'$. Since $\gamma$ is a probability measure and $G_\omega $ is constant in its input arguments, 
    \begin{align*}
            \|G_\omega-G_{\omega'}\|_{L^2(\gamma,\mathcal Y)}^2 
            &= \delta^2N^{-2t-1} \left\| \sum_{j=1}^N (\omega_j-\omega_j')\phi_j \right\|_{\mathcal Y}^2 \\ 
            &\geq d_\Phi^2\delta^2N^{-2t-1} \sum_{j=1}^N (\omega_j-\omega_j')^2 \\ 
            & = 4 d_\Phi^2\delta^2 \rho(\omega, \omega')\\
            &\geq \frac{d_\Phi^2\delta^2}{2} N^{-2t}.
    \end{align*}
    Choosing $N \asymp \varepsilon^{-1/t}$ such that last display is equal to $\varepsilon^2$, \eqref{eq: lower bound metric entropy operator L2} follows.
\end{proof}
The $L^\infty$-metric entropy lower bounds are obtained from the same packing arguments as for the $L^2$-metric entropy lower bound, with the difference that the separation is measured with respect to the supremum norm. We again distinguish the two cases in which either the input regularity or the output regularity determine the exponent $\kappa_\infty = \min \{s-1,t\}$. The exponent coincides (up to an arbitrarily small difference) with the exponent which appears in both the $L^\infty$-metric entropy upper bounds of Proposition \ref{proposition: Linfty metric entropy UB} and the $L^\infty$ approximation rate of Theorem \ref{theorem: operator approximation rate}.

The proof of the following proposition can be found in Appendix~\ref{appendix: proof of Linfty metric entropy LB}.
\begin{proposition}[$L^\infty$-metric entropy lower bounds]\label{proposition: Linfty metric entropy LB}
    Let $s>1$, $t>0$ and $L>0$, and suppose that Assumption \ref{assumption: measure} holds. Then, for all sufficiently small $\varepsilon>0$,
    \begin{align}\label{eq: Linfty metric entropy LB functional}
        \log N\left(\varepsilon, \mathcal H^s(L; \C), \|\cdot\|_{\infty}\right) \geq \log N\left(\varepsilon, \mathcal H^s_{\rm lin}(L; \C), \|\cdot\|_{\infty}\right)\gtrsim \varepsilon^{-\frac{1}{s-1}}.
    \end{align}
    Furthermore, setting $\kappa_{\infty} = \min \{s-1,t\}$, it holds that
    \begin{align}\label{eq: Linfty metric entropy lower bound operator}
        \log N\left(\varepsilon, \mathcal H^{s,t}(L; \mathcal Y_{\C}), \|\cdot\|_{\infty, \mathcal Y}\right) \geq \log N\left(\varepsilon, \mathcal H^{s,t}_{\rm lin}(L; \mathcal Y_{\C}), \|\cdot\|_{\infty, \mathcal Y}\right)
        \gtrsim \varepsilon^{-\frac{1}{\kappa_{\infty}}}.
    \end{align}
    The implicit constants are independent of $\varepsilon$.
\end{proposition}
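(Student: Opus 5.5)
The plan mirrors the $L^2$ packing argument in Proposition~\ref{proposition: L2 metric entropy LB}: build a Varshamov--Gilbert family of affine maps, show membership in the class, and bound pairwise separation, now in the supremum norm over $\operatorname{supp}(\gamma)=\sigma_s(Q)$. The first inequality in each display is trivial, because $\mathcal H^s_{\rm lin}\subset\mathcal H^s$ and $\mathcal H^{s,t}_{\rm lin}\subset\mathcal H^{s,t}$. By $M(2\varepsilon)\le N(\varepsilon)$, it then suffices to exhibit $2\varepsilon$-separated sets of cardinality $\exp(c\varepsilon^{-1/\kappa})$.

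\textbf{Functional case.} Take $\Omega\subset\{-1,1\}^N$ with $|\Omega|\ge 3^{N/4}$ and pairwise Hamming distance at least $N/8$. Set $g_\omega(z)=\delta\sum_{i=1}^N\omega_i\langle z,\widetilde\psi_i\rangle_{\mathcal X}$ exactly as before; the bound $\sup_{\mathcal D_\C}|g_\omega|\le\delta R$ carries over verbatim. On $Q$ we have
\begin{equation*}
  g_\omega(\sigma_s(c))-g_{\omega'}(\sigma_s(c))=\delta\sum_{i=1}^N(\omega_i-\omega_i')\,i^{-s}c_i .
\end{equation*}
Taking the supremum over $c\in Q$ means choosing $c_i=\operatorname{sign}(\omega_i-\omega'_i)$, which gives
\begin{equation*}
  \|g_\omega-g_{\omega'}\|_\infty=2\delta\sum_{i:\,\omega_i\ne\omega_i'}i^{-s}\ge 2\delta\,\rho(\omega,\omega')\,N^{-s}\ge \tfrac{\delta}{4}N^{1-s}.
\end{equation*}
Choosing $N\asymp\varepsilon^{-1/(s-1)}$ then gives the bound \eqref{eq: Linfty metric entropy LB functional}. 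One could sharpen this by restricting to indices in $(N/2,N]$, but the crude bound $i^{-s}\ge N^{-s}$ already suffices.

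\textbf{Operator case.} If $s-1\le t$, use $G_\omega=g_\omega\,\phi_1$ as before. The $\mathcal Y^t$ bound is unchanged, and
\begin{equation*}
  \|G_\omega-G_{\omega'}\|_{\infty,\mathcal Y}\ge d_\Phi\|g_\omega-g_{\omega'}\|_\infty ,
\end{equation*}
so the functional estimate transfers directly. If $s-1>t$, use constant operators. The $L^2$ construction $\delta N^{-t-1/2}\sum_j\omega_j\phi_j$ only gives supremum separation of order $N^{-t}$ in the $\mathcal Y$-norm, which matches the $L^2$ situation. Constants have equal $L^2$ and $L^\infty$ norms, so the same family gives
\begin{equation*}
  \|G_\omega-G_{\omega'}\|_{\infty,\mathcal Y}\ge \tfrac{d_\Phi\delta}{\sqrt2}\,N^{-t}.
\end{equation*}
With $N\asymp\varepsilon^{-1/t}$ this yields $\log M\gtrsim\varepsilon^{-1/t}=\varepsilon^{-1/\kappa_\infty}$. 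Membership in the class is exactly as in the $L^2$ proof.

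\textbf{Main obstacle.} There is essentially none; the only step needing care is evaluating the supremum over $\sigma_s(Q)$. It is legitimate to pick $c\in Q$ with signs matching $\omega-\omega'$ because $Q=[-1,1]^{\N}$ is a full product and $\sigma_s$ is a bijection onto $\sigma_s(Q)$. This makes the $\ell^1$-type separation $\sum_i i^{-s}$ over the differing indices exact, and that is precisely where the exponent improves from $s-1/2$ to $s-1$.
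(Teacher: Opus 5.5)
Your proposal is correct and follows the paper's proof essentially verbatim. It uses the same Varshamov--Gilbert family $g_\omega$ with the sign choice $c_i=\operatorname{sgn}(\omega_i-\omega_i')$, which gives supremum separation of order $N^{1-s}$; the lift $g_\omega\phi_1$ when $s-1\le t$; and the constant operators $\delta N^{-t-1/2}\sum_{j=1}^N\omega_j\phi_j$ when $s-1>t$. Your explicit packing-to-covering step $M(2\varepsilon)\le N(\varepsilon)$ and your constants (e.g.\ $\tfrac{\delta}{4}N^{1-s}$) are slightly more careful than the paper's.
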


\section{Information-Theoretic Lower Bounds}\label{section: lower bounds}

In the previous section, we derived upper and lower bounds on the metric entropy of the scalar-valued class $\mathcal H^s(L; \C)$ and the Hilbert space-valued class $\mathcal H^{s,t}(L; \mathcal Y_{\C})$. We now combine these metric entropy estimates with information-theoretic arguments to derive minimax lower bounds in both the operator white noise model, formally given by 
\begin{equation}\label{eq: white noise}
    \mathbb Y_{G_0}^{(n)} = G_0  + \frac{\sigma}{\sqrt{n}}\mathbb  W_{H},
\end{equation}
and the random design white noise model 
\begin{equation*}
    Y_i = G_0(X_i) + \sigma\varepsilon_i, \quad \sigma>0,~ i =1, \dots, n.
\end{equation*}

To lower bound the minimax risk, one first reduces the estimation problem to a multiple hypotheses testing problem, whose error will be bounded from below via information-theoretic arguments. Let $\mathcal M$ be a parameter class, possibly infinite-dimensional, equipped with the semi-metric $d$, let $f\in \mathcal M$ and $\P_f$ the corresponding observation law. The general proof uses the following two main steps:

\begin{enumerate}
    \item Choose a $2\delta$-packing $\{f_1, \dots, f_M\}\subseteq \mathcal M$ with respect to $d$ and write $\P_j = \P_{f_j}$. Let $J$ be uniformly distributed on $\{1, \dots, M\}$ and conditionally on $J=j$, let $Z\sim P_j$. Denote the mixture distribution by $\overline P\coloneqq \frac{1}{M}\sum_{j=1}^M P_j$. Then the mutual information between $Z$ and $J$ is given by \parencite[Ch.~15.3.1]{wainwright2019high}
    \begin{equation*}
        I(Z,J) = \frac{1}{M}\sum_{j=1}^M \operatorname{KL}(P_j, \overline P).
    \end{equation*}
    The standard reduction from estimation to testing, followed by Fano's inequality \parencite[Proposition 15.12]{wainwright2019high}, yields 
    \begin{align}\label{eq: Fano's lemma}
        \inf_{\widehat f}\sup_{f\in \mathcal M}\mathbb E_{f}[d(f,\widehat f)] \geq \delta\left(1 - \frac{I(Z, J) + \log 2}{\log M}\right).
    \end{align}
    The construction of such packings will be obtained directly from the metric entropy lower bounds established in Section \ref{section: metric entropy}.
    \item We upper-bound the mutual information using the Yang--Barron method \parencite{yang1999information} (used here in the form of \parencite[Lemma 15.21]{wainwright2019high}). Let $N_{KL}(\varepsilon, \mathcal M)$ denote the $\varepsilon$-covering number of $\{\P_f: f\in \mathcal M\}$ with respect to the square root of the Kullback--Leibler divergence. Then
    \begin{equation}\label{eq: yang-barron}
        I(Z,J) \leq \inf_{\varepsilon>0} (\varepsilon^2 + \log N_{KL}(\varepsilon, \mathcal M)).
    \end{equation}
\end{enumerate}
{For the generalized Gaussian white noise model, the relationship between the Kullback-Leibler divergence and the $H$-norm is recalled in Section 4.1. This allows us to control the KL-covering numbers directly by the metric entropy estimates derived in Section \ref{section: metric entropy}.}

For $\mathcal Y$-valued operators, we set $\mathcal M = \mathcal H^{s,t}(L; \mathcal Y_{\C})$, $H = L^2(\gamma, \mathcal Y)$ and $d$ is the metric corresponding to the norm $\|\cdot\|_{L^2(\gamma, \mathcal Y)}$ or $\|\cdot\|_{\infty, \mathcal Y}$. For scalar-valued functionals, we set $\mathcal M = \mathcal H^{s}(L; \C)$, $H = L^2(\gamma)$ and $d$ is the metric corresponding to the norm $\|\cdot\|_{L^2(\gamma)}$ or $\|\cdot\|_{\infty}$.

\subsection{Generalized Gaussian White Noise Models}\label{subsection: white noise}

We consider the following two observation models. First, following \textcite{reinhardt2026statistical}, we study the random design white noise model
\begin{align}\label{eq: random design white noise}
    Y_i = G_0(X_i) + \sigma\varepsilon_i,
\end{align}
with i.i.d. $X_i \sim \gamma$ and i.i.d. $\mathcal Y$-white noise processes $\varepsilon_i$. Motivated by the classical Gaussian white noise model \parencite{gine_mathematical_2021} on $L^2([-1,1])$, we consider an infinite-dimensional analogue on a general Hilbert space $H$.

It is well known that on any separable Hilbert space $H$, one may define an \textit{isonormal} or \textit{Gaussian white noise process} $\mathbb W_H$ which is centered with the covariance $\E [\mathbb W_H(h) \mathbb W_H(g)] = \langle h, g\rangle_H$ for all $h,g\in H$. Such a process always exists on a Hilbert space, as it can easily be proven that for any orthonormal basis $(e_j)_{j\in \mathbb N}\subset H$ and a sequence of i.i.d. standard normal $(\xi_j)_{j\in \mathbb N}$, the mapping 
\begin{equation*}
    h\mapsto \sum_{j=1}^\infty \xi_j \langle e_j, h\rangle
\end{equation*}
constitutes a white noise \parencite[Theorem 2.12]{da2014stochastic}.
This allows us to define generalized Gaussian white noise models on any Hilbert space $H$.
\begin{definition}[Generalized Gaussian white noise models]\label{def: general Gaussian White Noise}
    Let $H$ be a separable Hilbert space, $h\in H$ and $\sigma>0$. Let $\mathbb W_H$ be a white noise on $H$. We say that $\mathbb Y_h^{(n)}$ arises from a generalized Gaussian white noise model on $H$ and formally write 
    \begin{equation*}
        \mathbb Y_h^{(n)}  = h + \frac{\sigma}{\sqrt{n}}\mathbb W_{H}.
    \end{equation*}
\end{definition}

Observing the stochastic process $\mathbb Y_h^{(n)}$ simultaneously for the whole orthonormal basis $(e_j)_{j\in \mathbb N}$ of $H$ gives rise to the equivalent \textit{Gaussian Sequence Space Model} 
\begin{equation*}
    Y_k = \langle h, e_k\rangle_H + \frac{\sigma}{\sqrt{n}}\xi_k, \quad k\in \N
\end{equation*}
where $\xi_k = \mathbb W_H(e_k)\sim \mathcal N(0,\|e_k\|_H^2) = \mathcal N(0,1)$.

Since any separable Hilbert space $H$ is isometrically isomorphic to $\ell^2(\N)$, observing the generalized Gaussian white noise model on $H$ is always equivalent to observing the Gaussian sequence space model on $\ell^2(\N)$ in the sense that the Le Cam-distance between the two experiments is zero, see also \parencite[Theorem 1.2.1]{gine_mathematical_2021}. While the asymptotic equivalence (in the sense of Le Cam) between the random design and the white noise model was shown for the finite-dimensional case, it is not clear whether the same holds random design white noise model \eqref{eq: random design white noise} and the generalized Gaussian white noise model on $H = L^2(\gamma, \mathcal Y)$. 

The generalized Gaussian white noise model and the Gaussian sequence space model are especially convenient for likelihood calculations, since the Radon-Nikodym derivative between the appearing Gaussian measures is explicit by the \textit{Cameron-Martin formula}, see \parencite[Corollary 2.4.3]{bogachev1998gaussian} for the general result. For $\textcolor{red}{H=}\ell^2(\N)$, this result can also be directly deduced from Kakutani's theorem \parencite[Theorem 1]{kakutani1948equivalence}, see also \parencite[Proposition 6.1.4]{gine_mathematical_2021}.

Consequently, for $h, h' \in H$, denoting by $P_{h}^Y$ and $P_{h'}^Y$ the corresponding observation laws, it holds that
\begin{align}\label{eq: KL Gaussian}
    \operatorname{KL}(P_{h,n}^Y, P_{h',n}^Y) &= \frac{n}{2\sigma^2} \lVert h- h'\rVert_{H}^2.
\end{align}

\subsection{Minimax Lower Bounds for Holomorphic Functionals in the Generalized White Noise Model}\label{subsection: LB holomorphic functionals}

We now translate the metric entropy estimates of the Section \ref{section: metric entropy} into minimax lower bounds for $\mathcal H^{s}(L; \C)$ and $\mathcal H^{s,t}(L; \mathcal{Y}_{\C})$. In order to apply Fano's inequality for the $L^2$ or $L^\infty$ loss, we will need the corresponding metric entropy lower bounds in Proposition \ref{proposition: L2 metric entropy LB} and Proposition \ref{proposition: Linfty metric entropy LB}. The Yang-Barron method then requires us to control the Kullback-Leibler divergence of the corresponding observation laws. 
In the generalized white noise model for $H = L^2(\mathcal X, \mathcal Y, \gamma)$, this  is determined by the squared $L^2$-distance, which is why we will need the $L^2$- metric entropy upper bounds of Proposition \ref{proposition: L2 metric entropy UB}. This is summarized in the following Lemma, which is based on the results by \textcite{yang1999information}. 

\begin{lemma}[Minimax lower bounds based on metric entropy]\label{lemma: minimax metric entropy}
Let $\mathcal M\subset H$, where $H$ is a separable Hilbert space, and suppose that the observation laws satisfy 
\begin{align}\label{eq: KL-identity}
   KL(\P_f,\P_g) = \frac{n}{2\sigma^2}\|f-g\|_H^2, \qquad f,g\in H. 
\end{align}
Let $d$ be a semi-metric on $\mathcal M$. Suppose that, for some $\alpha,\beta>0$ and all sufficiently small $\varepsilon,\delta>0$, 
\begin{equation*}
    \log N(\varepsilon,\mathcal M,\|\cdot\|_H) \lesssim \varepsilon^{-\alpha}
\end{equation*}
and 
\begin{equation*}
    \log N(\delta,\mathcal M,d) \gtrsim \delta^{-\beta}. 
\end{equation*}
Then, for all sufficiently large $n$, 
\begin{equation*}
    \inf_{\widehat f} \sup_{f\in\mathcal M} \mathbb E_f[d(\widehat f,f)] \gtrsim \left(\frac{n}{\sigma^2}\right)^{ -\frac{\alpha}{\beta(\alpha+2)} }.
\end{equation*}
\end{lemma}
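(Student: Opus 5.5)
The plan is the standard Yang--Barron argument, combining the Fano bound \eqref{eq: Fano's lemma} with the mutual information bound \eqref{eq: yang-barron}. Throughout, $\delta$ denotes the separation scale (as in the statement), not the constant from earlier proofs. There are four steps.

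\textbf{Step 1: Fano's inequality.} Fix $\delta>0$ small. By the assumed lower bound $\log N(\delta,\mathcal M,d)\gtrsim\delta^{-\beta}$ and the inequality $M(\delta,\mathcal M,d)\geq N(\delta,\mathcal M,d)$, there is a $\delta$-packing $\{f_1,\dots,f_M\}\subset\mathcal M$ with
\begin{equation*}
    \log M\geq c_1\delta^{-\beta}.
\end{equation*}
Applying \eqref{eq: Fano's lemma} with $\delta/2$ in place of $\delta$ gives
\begin{equation*}
    \inf_{\widehat f}\sup_{f}\mathbb E_f[d(\widehat f,f)]\geq \frac{\delta}{2}\left(1-\frac{I(Z,J)+\log 2}{\log M}\right).
\end{equation*}

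\textbf{Step 2: bounding the mutual information.} By \eqref{eq: KL-identity}, an $\eta$-cover of $\mathcal M$ in $\|\cdot\|_H$ is a $\sqrt{n/(2\sigma^2)}\,\eta$-cover in $\sqrt{\mathrm{KL}}$. Hence
\begin{equation*}
    \log N_{KL}(\varepsilon,\mathcal M)\leq \log N\big(\varepsilon\sqrt{2\sigma^2/n},\mathcal M,\|\cdot\|_H\big)\lesssim \left(\frac{n}{\sigma^2}\right)^{\alpha/2}\varepsilon^{-\alpha}.
\end{equation*}
Inserting this into \eqref{eq: yang-barron} and balancing $\varepsilon^2$ against $(n/\sigma^2)^{\alpha/2}\varepsilon^{-\alpha}$ suggests $\varepsilon^{2+\alpha}\asymp (n/\sigma^2)^{\alpha/2}$. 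This yields
\begin{equation*}
    I(Z,J)\leq C_2\left(\frac{n}{\sigma^2}\right)^{\frac{\alpha}{\alpha+2}}.
\end{equation*}
For large $n$ this $\varepsilon$ is large, which seems to sit outside the small-$\varepsilon$ regime of the hypothesis. It does not: the hypothesis is only applied at radius $\varepsilon\sqrt{2\sigma^2/n}\asymp (n/\sigma^2)^{-1/(\alpha+2)}$, which tends to $0$.

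\textbf{Step 3: choosing $\delta$.} The aim is for the bracket in Step 1 to be at least $1/2$, that is, $\log M\geq 2(I(Z,J)+\log 2)$. Choose
\begin{equation*}
    \delta=c\left(\frac{n}{\sigma^2}\right)^{-\frac{\alpha}{\beta(\alpha+2)}}.
\end{equation*}
Then $c_1\delta^{-\beta}=c_1c^{-\beta}(n/\sigma^2)^{\alpha/(\alpha+2)}$. Taking $c$ small enough that $c_1c^{-\beta}\geq 4C_2$ and $n$ large enough that $\log 2$ is absorbed gives $\log M\geq 2(I+\log 2)$. Also $\delta\to0$, so the lower entropy bound applies. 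The risk is then at least $\delta/4\gtrsim (n/\sigma^2)^{-\alpha/(\beta(\alpha+2))}$.

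\textbf{Step 4: packing elements must lie in $\mathcal M$.} The main subtlety is ensuring the packing and covering points are legitimate elements, so that $\mathbb P_f$ is defined and the mixture $\overline P$ makes sense. Covering centers may lie outside $\mathcal M$, but \eqref{eq: KL-identity} holds on all of $H$, and $\mathrm{KL}$ to any point of $H$ suffices for the Yang--Barron bound in the form of \parencite[Lemma 15.21]{wainwright2019high}. If needed, pass to centers in $\mathcal M$ at the cost of doubling the radius, since $N(2\varepsilon,\mathcal M)$ with internal centers is at most $N(\varepsilon,\mathcal M)$. This changes only constants. Beyond that, the only delicate point is the bookkeeping of constants across the two "sufficiently small" regimes, which I expect to be routine.
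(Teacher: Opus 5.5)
Your proposal is correct and follows essentially the same route as the paper: Fano's inequality over a $d$-packing, combined with the Yang--Barron bound $I(Z,J)\lesssim (n/\sigma^2)^{\alpha/(\alpha+2)}$ (obtained from the KL identity and the $H$-entropy upper bound), and the choice $\delta\asymp (n/\sigma^2)^{-\alpha/(\beta(\alpha+2))}$ with a sufficiently small constant. The differences are cosmetic: you use a $\delta$-packing with Fano applied at $\delta/2$, where the paper uses a $2\delta$-packing. Your extra checks, that the covering radius $\varepsilon\sqrt{2\sigma^2/n}$ tends to zero and that covering centers in $H$ are admissible, are correct details the paper leaves implicit.
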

\begin{proof}
    Let $J$ be uniformly distributed over a finite $2\delta$-packing of $\mathcal M$ in the metric $d$. By Fano's inequality, 
    \begin{equation*}
        \inf_{\widehat f} \sup_{f\in\mathcal M} \mathbb E_f[d(\widehat f,f)] \geq \delta \left( 1-\frac{I(Z;J)+\log 2}{\log M(2\delta,\mathcal M,d)} \right).
    \end{equation*}
    By the Yang--Barron method \parencite{yang1999information} in the form of \eqref{eq: yang-barron}, and \eqref{eq: KL-identity}, 
    \begin{align*} 
        I(Z;J) 
        &\leq \inf_{\varepsilon>0} \left\{ \varepsilon^2+ \log N\!\left( \frac{\sqrt{2}\sigma \varepsilon}{\sqrt n}, \mathcal M, \|\cdot\|_H \right) \right\} \\ 
        &\lesssim \inf_{u>0} \left\{ u^2+ \left( \frac{\sqrt n}{\sigma u} \right)^\alpha \right\} \\
        &\lesssim \left( \frac{n}{\sigma^2} \right)^{\alpha/(\alpha+2)}. 
    \end{align*} 
    Choosing $\delta \asymp \left( \frac{n}{\sigma^2} \right)^{-\alpha/(\beta(\alpha+2))} $ with a sufficiently small implicit constant makes the logarithm of the packing number a sufficiently large multiple of the mutual information. Fano's inequality then gives the claimed bound.
\end{proof}

\begin{theorem}\label{theorem: lower bounds functional}
    Let $s>1$, $L>0$ and $\sigma>0$. Suppose that Assumption \ref{assumption: measure} holds. Set $\eta\coloneqq s- 1/2$ and $\xi \coloneqq s-1$. Then for every $\tau>0$, the minimax risk in the generalized white noise model on $H = L^2(\gamma)$ satisfies 
    \begin{equation*}
        \inf_{\widehat g_n} \sup_{g_0\in\mathcal H^s(L;\C)} \mathbb E_{g_0} \left[ \|\widehat g_n-g_0\|_{L^2(\gamma)} \right] \gtrsim n^{-\frac{\eta}{2\eta+1}-\tau} = n^{-\frac{s-1/2}{2s}-\tau}.
    \end{equation*}
    Moreover, 
    \begin{equation*}
        \inf_{\widehat g} \sup_{g_0\in\mathcal H^s(L;\C)} \mathbb E_{g_0} \left[ \|\widehat g-g_0\|_{\infty} \right] \gtrsim n^{-\frac{\xi}{2\eta+1}-\tau} = n^{-\frac{s-1}{2s}-\tau}.
    \end{equation*}
\end{theorem}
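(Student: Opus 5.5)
The plan is to apply Lemma \ref{lemma: minimax metric entropy} with $\mathcal M = \mathcal H^s(L;\C)$ and $H = L^2(\gamma)$. In the generalized white noise model on $H$, the KL identity \eqref{eq: KL-identity} holds by \eqref{eq: KL Gaussian}. The Kullback--Leibler side of the argument is then controlled by the $L^2$-metric entropy upper bound of Proposition \ref{proposition: L2 metric entropy UB}. Given $\tau>0$, pick $\tau'>0$ small and set $\alpha = \frac{1}{s-1/2}+\tau' = \frac1\eta + \tau'$, so that $\log N(\varepsilon,\mathcal M,\|\cdot\|_{L^2(\gamma)})\lesssim \varepsilon^{-\alpha}$ for all small $\varepsilon$.

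For the $L^2$ loss, take $d$ to be the $L^2(\gamma)$-metric. Proposition \ref{proposition: L2 metric entropy LB} gives $\log N(\delta,\mathcal M,d)\gtrsim \delta^{-\beta}$ with $\beta = 1/\eta$. Lemma \ref{lemma: minimax metric entropy} then yields the rate $n^{-\alpha/(\beta(\alpha+2))}$, and it remains to check the exponent. As $\tau'\to 0$,
\begin{equation*}
    \frac{\alpha}{\beta(\alpha+2)} \to \frac{1/\eta}{(1/\eta)(1/\eta+2)} = \frac{\eta}{2\eta+1}.
\end{equation*}
The map $\alpha\mapsto \frac{\alpha}{\alpha+2}$ is continuous and increasing, so the exponent obtained exceeds $\frac{\eta}{2\eta+1}$ by an amount tending to $0$ with $\tau'$. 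Choosing $\tau'$ small enough therefore gives $n^{-\frac{\eta}{2\eta+1}-\tau}$. Finally, $2\eta+1 = 2s$, which gives the stated form $n^{-\frac{s-1/2}{2s}-\tau}$. The constant $\sigma$ is fixed and is absorbed into the implicit constant.

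For the $L^\infty$ loss, keep the same $\mathcal M$, $H$ and $\alpha$, but take $d$ to be the sup-metric $\|\cdot\|_\infty$ on $\operatorname{supp}(\gamma)$. This is a genuine metric on $\mathcal M$, since its elements are bounded, continuous maps on the compact set $\sigma_s(Q)$. Proposition \ref{proposition: Linfty metric entropy LB} gives $\beta = 1/\xi = 1/(s-1)$. The exponent then becomes
\begin{equation*}
    \frac{\alpha}{\beta(\alpha+2)} \to \xi\cdot\frac{1/\eta}{1/\eta+2} = \frac{\xi}{2\eta+1} = \frac{s-1}{2s},
\end{equation*}
again with a loss of at most $\tau$ once $\tau'$ is small.

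The only point requiring care is that Lemma \ref{lemma: minimax metric entropy} mixes two metrics. The packing is taken in $d$, while the covering is taken in $\|\cdot\|_H$. I need a $2\delta$-packing in $d$ whose log-cardinality is of order $\delta^{-\beta}$. This follows from the covering lower bound via $M(2\delta)\ge N(2\delta)$, so the lower bounds in the propositions transfer to packings up to constants. Everything else is a direct substitution. The $\tau$-loss in both exponents comes solely from the $\tau$-loss in the entropy upper bound, since the lower bounds are sharp.
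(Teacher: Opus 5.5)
Your proposal is correct and is essentially the paper's proof. You verify \eqref{eq: KL-identity} via \eqref{eq: KL Gaussian}, then apply Lemma~\ref{lemma: minimax metric entropy} with $\alpha = 1/\eta+\tau'$ from Proposition~\ref{proposition: L2 metric entropy UB} and with $\beta=1/\eta$ (resp.\ $\beta=1/\xi$) from Proposition~\ref{proposition: L2 metric entropy LB} (resp.\ Proposition~\ref{proposition: Linfty metric entropy LB}), and let $\tau'\to 0$ in the exponent. Your remark that the covering lower bound yields the required $2\delta$-packing via $M(2\delta)\ge N(2\delta)$ makes explicit a step the paper leaves implicit in the lemma.
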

\begin{proof}
    By \eqref{eq: KL Gaussian}, the observation laws for the generalized Gaussian white noise model on $H = L^2(\gamma)$ satisfy the condition \eqref{eq: KL-identity} of Lemma \ref{lemma: minimax metric entropy}. Fix $\tau>0$ and let $\delta>0$ be arbitrarily small. By Proposition \ref{proposition: L2 metric entropy UB}, 
    \begin{equation*}
        \log N\!\left( \varepsilon,\mathcal H^s(L;\C),\|\cdot\|_{L^2(\gamma)} \right) \lesssim \varepsilon^{-\frac{1}{\eta}-\delta}.
    \end{equation*}
    For the $L^2$-loss, by Proposition \ref{proposition: L2 metric entropy LB}, we have 
    \begin{equation*}
        \log N\!\left( \varepsilon,\mathcal H^s(L;\C),\|\cdot\|_{L^2(\gamma)} \right) \gtrsim \varepsilon^{-\frac{1}{\eta}}.
    \end{equation*}
    By an application of Lemma \ref{lemma: minimax metric entropy} with $\alpha = 1/\eta + \delta$ and $\beta = 1/\eta$, the rate exponent is
    \begin{equation*}
        - \frac{\alpha}{\beta(\alpha + 2)} = - \frac{(1+ \delta \eta)\eta}{(1 + 2\eta + \delta \eta)} \geq - \frac{\eta}{1 + 2\eta} - \tau
    \end{equation*}
    for sufficiently small $\delta>0$.
    For the $L^\infty$-loss, Proposition~\ref{proposition: Linfty metric entropy LB} yields 
    \begin{equation*}
        \log N\!\left( \varepsilon,\mathcal H^s(L;\C),\|\cdot\|_\infty \right) \gtrsim \varepsilon^{-1/\xi}    
    \end{equation*}
    Applying Lemma~\ref{lemma: minimax metric entropy} with $\alpha=1/\eta+\delta$ and $\beta=1/\xi$ yields the second claim. 
\end{proof}

\subsection{Minimax Lower Bounds for Holomorphic Operators in the Generalized White Noise Setting}\label{subsection: GWN lower bounds}

The lower bounds for the class of holomorphic operators $\mathcal H^{s,t}(L; \mathcal Y_{\C})$ follow from analogous argument as in Section \ref{subsection: LB holomorphic functionals}. 

\begin{theorem}\label{theorem: Lower Bounds}
    Let $s>1$, $t>0$, $L>0$, and $\sigma>0$. Suppose that Assumption \ref{assumption: measure} holds. Set $\kappa_2\coloneqq\min\{s-1/2,t\}$ and $\kappa_\infty\coloneqq\min\{s-1,t\}$. Then, for any arbitrarily small $\tau>0$, 
    \begin{equation*}
        \inf_{\widehat G_n}\sup_{G_0 \in \mathcal H^{s,t}(L; \mathcal Y_{\C})} \mathbb E_{G_0}[\|\widehat G_n - G_0\|_{L^2(\gamma, \mathcal Y)}] \gtrsim n^{-\frac{\kappa_2}{2 \kappa_2 + 1} - \tau}.
    \end{equation*}
    Furthermore, 
    \begin{equation*}
        \inf_{\widehat G_n}\sup_{G_0 \in \mathcal H^{s,t}(L; \mathcal Y_{\C})} \mathbb E_{G_0}[\|\widehat G_n - G_0\|_{\infty, \mathcal Y}] \gtrsim n^{-\frac{\kappa_{\infty}}{2 \kappa_{2} + 1} - \tau}
    \end{equation*}
    for $\kappa_{\infty} = \min \{s-1, t\}$ and arbitrarily small $\tau>0$.
\end{theorem}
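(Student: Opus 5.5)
The plan is to mirror the proof of Theorem \ref{theorem: lower bounds functional}, now with $\mathcal M = \mathcal H^{s,t}(L;\mathcal Y_{\C})$ and $H = L^2(\gamma,\mathcal Y)$. In the generalized Gaussian white noise model on this $H$, the identity \eqref{eq: KL Gaussian} gives exactly the hypothesis \eqref{eq: KL-identity} of Lemma \ref{lemma: minimax metric entropy}. It therefore only remains to supply the two entropy exponents $\alpha$ and $\beta$ and to verify the resulting rate exponent.

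\textbf{Entropy inputs.} Fix $\tau>0$ and let $\delta>0$ be a small auxiliary parameter to be chosen.
\begin{enumerate}
\item By Proposition \ref{proposition: L2 metric entropy UB}, we have $\log N(\varepsilon,\mathcal M,\|\cdot\|_{L^2(\gamma,\mathcal Y)})\lesssim \varepsilon^{-1/\kappa_2-\delta}$, so we take $\alpha=1/\kappa_2+\delta$.
\item For the $L^2$-loss, Proposition \ref{proposition: L2 metric entropy LB} provides $\log N(\varepsilon,\mathcal M,\|\cdot\|_{L^2(\gamma,\mathcal Y)})\gtrsim\varepsilon^{-1/\kappa_2}$, so $\beta=1/\kappa_2$.
\item For the $L^\infty$-loss, Proposition \ref{proposition: Linfty metric entropy LB} provides $\log N(\varepsilon,\mathcal M,\|\cdot\|_{\infty,\mathcal Y})\gtrsim\varepsilon^{-1/\kappa_\infty}$, so $\beta=1/\kappa_\infty$.
\end{enumerate}
The lower bounds already hold over $\mathcal H^{s,t}_{\rm lin}$, which is a subset of $\mathcal M$, so they transfer to $\mathcal M$. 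This also yields the stronger statement over the linear class.

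\textbf{Exponent computation.} Lemma \ref{lemma: minimax metric entropy} gives the rate $n^{-\alpha/(\beta(\alpha+2))}$, treating $\sigma$ as a constant.

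In the $L^2$ case,
\[
\frac{\alpha}{\beta(\alpha+2)}=\frac{\kappa_2(1+\delta\kappa_2)}{1+2\kappa_2+\delta\kappa_2}.
\]
This tends to $\kappa_2/(2\kappa_2+1)$ as $\delta\to0$, so choosing $\delta$ small enough makes it at most $\kappa_2/(2\kappa_2+1)+\tau$.

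In the $L^\infty$ case,
\[
\frac{\alpha}{\beta(\alpha+2)}=\frac{\kappa_\infty(1+\delta\kappa_2)}{1+2\kappa_2+\delta\kappa_2}.
\]
This tends to $\kappa_\infty/(2\kappa_2+1)$, which gives the second bound. Continuity of these expressions in $\delta$ is all that is needed.

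\textbf{Main obstacle.} The one genuine issue is that Lemma \ref{lemma: minimax metric entropy} uses the packing number $M(2\delta,\mathcal M,d)$, while the propositions are stated for covering numbers. I would convert using $M(2\delta)\ge N(2\delta)$. I would also check that the Fano/Yang--Barron argument only needs the packing measured in the loss metric $d$ (here $L^\infty$) and the KL-covering measured in the $H$-norm. This is exactly why the $L^\infty$ rate mixes $\kappa_\infty$ in the numerator with $\kappa_2$ in the denominator.

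A further check is needed for the $L^\infty$ loss: the sup-norm $\|\cdot\|_{\infty,\mathcal Y}$ on $\operatorname{supp}(\gamma)$ must be a well-defined semi-metric on $\mathcal M$, which holds by the continuity in Lemma \ref{lemma: (b,r)-holomorphy}. Finally, the phrase "sufficiently large $n$" in the lemma is absorbed into the implicit constant, giving the bounds for all $n$.
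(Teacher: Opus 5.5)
Your proposal is correct and follows the paper's proof: you verify the KL identity via \eqref{eq: KL Gaussian}, then apply Lemma~\ref{lemma: minimax metric entropy} with $\alpha=1/\kappa_2+\delta$ from Proposition~\ref{proposition: L2 metric entropy UB}, and with $\beta=1/\kappa_2$ (respectively $\beta=1/\kappa_\infty$) from Propositions~\ref{proposition: L2 metric entropy LB} and~\ref{proposition: Linfty metric entropy LB}. Your explicit exponent computations, the packing-versus-covering conversion, and the remark on absorbing ``sufficiently large $n$'' into the constant are all correct, and they spell out steps the paper leaves implicit.
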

\begin{proof}
    By \eqref{eq: KL Gaussian}, the generalized white noise model on $H=L^2(\gamma,\mathcal Y)$ satisfties the condition \eqref{eq: KL-identity} of Lemma \ref{lemma: minimax metric entropy}.
    Fix $\delta>0$ arbitrarily small. By Proposition \ref{proposition: L2 metric entropy UB}, 
    \begin{equation*}
        \log N\!\left( \varepsilon, \mathcal H^{s,t}(L;\mathcal Y_\C), \|\cdot\|_{L^2(\gamma,\mathcal Y)} \right) \lesssim \varepsilon^{-\frac{1}{\kappa_2}-\delta}.
    \end{equation*}
    For the $L^2$-loss, Proposition \ref{proposition: L2 metric entropy LB} gives a metric entropy lower bound of order $\varepsilon^{-1/\kappa_2}$. Applying Lemma~\ref{lemma: minimax metric entropy} with $\alpha=1/\kappa_2+\delta$ and $\beta=1/\kappa_2$ yields the lower bound $n^{-\kappa_2/(2\kappa_2+1)-\tau}$.  For the $L^\infty$-loss, Proposition~\ref{proposition: Linfty metric entropy LB} yields a metric entropy lower bound of order $\varepsilon^{-1/\kappa_\infty}$. Applying Lemma \ref{lemma: minimax metric entropy} again with $\alpha=1/\kappa_2+\delta$ and $\beta=1/\kappa_\infty$ yields $ n^{-\kappa_\infty/(2\kappa_2+1)-\tau}$.  Since $\delta>0$ may be chosen arbitrarily small, so may $\tau>0$.
\end{proof}

\subsection{Minimax Lower Bounds in the Random Design White Noise Setting}\label{subsection: RWN lower bounds}

Because for any $G, G' \in L^2(\gamma, \mathcal Y)$ the Kullback–Leibler divergence between the corresponding observation laws in the random design white noise model \eqref{eq: random design white noise} coincides with that in the generalized Gaussian white noise model \eqref{eq: white noise}, the lower bounds established in Theorem \ref{theorem: lower bounds functional} and Theorem \ref{theorem: Lower Bounds} also hold for the random design white noise model. The full proof can be found in Appendix \ref{appendix: proof of lower bounds random design}.

\begin{theorem}\label{theorem: lower bounds random design}
    Assume that the observations arise from the random design white noise model \eqref{eq: random design white noise}. Let $s>1$, $t>0$ and $L>0$, and let Assumption \ref{assumption: measure} hold. Then the same lower bound for the minimax risk over $\mathcal H^{s,t}(L; \mathcal Y_{\C})$ hold with respect to the $L^2(\gamma, \mathcal Y)$-loss and the $L^\infty(\operatorname{supp}(\gamma),\mathcal Y)$-loss as in Theorem \ref{theorem: Lower Bounds}.
    For $\mathcal Y = \R$, the corresponding minimax lower bounds of Theorem \ref{theorem: lower bounds functional} for $\mathcal H^s(L;\C)$ hold with respect to the $L^2(\gamma)$ and the $L^\infty(\operatorname{supp}(\gamma))$-loss.
\end{theorem}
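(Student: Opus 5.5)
The plan is to reuse the argument behind Theorem \ref{theorem: Lower Bounds} and Theorem \ref{theorem: lower bounds functional} verbatim, via Lemma \ref{lemma: minimax metric entropy}. The only model-specific input that lemma needs is the KL identity \eqref{eq: KL-identity} with $H = L^2(\gamma,\mathcal Y)$ (respectively $H=L^2(\gamma)$ for $\mathcal Y=\R$). Everything else in the lemma depends only on the parameter class and the loss: the $L^2$-entropy upper bound of Proposition \ref{proposition: L2 metric entropy UB} and the packing lower bounds of Propositions \ref{proposition: L2 metric entropy LB} and \ref{proposition: Linfty metric entropy LB}. None of these refer to the observation scheme. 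Hence it suffices to verify that, for $G,G'\in L^2(\gamma,\mathcal Y)$, the laws $\P_G^{(n)}$, $\P_{G'}^{(n)}$ of the $n$ i.i.d. pairs $(X_i,Y_i)$ from \eqref{eq: random design white noise} satisfy
\begin{equation*}
    \operatorname{KL}(\P_G^{(n)},\P_{G'}^{(n)}) = \frac{n}{2\sigma^2}\|G-G'\|_{L^2(\gamma,\mathcal Y)}^2 .
\end{equation*}

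First, by independence and tensorization of KL, it is enough to treat $n=1$ and multiply by $n$. For a single pair, I would use the chain rule for KL divergence. The marginal of $X$ is $\gamma$ under both hypotheses, so that term vanishes and only the expected conditional divergence remains:
\begin{equation*}
    \operatorname{KL}(\P_G,\P_{G'}) = \int_{\mathcal X}\operatorname{KL}\bigl(\mathcal L(G(x)+\sigma\varepsilon),\mathcal L(G'(x)+\sigma\varepsilon)\bigr)\,d\gamma(x).
\end{equation*}
Conditionally on $X=x$, the observation is the generalized white noise model on $\mathcal Y$ with signal $G(x)$ and noise level $\sigma$, i.e. Definition \ref{def: general Gaussian White Noise} with $H=\mathcal Y$ and $n=1$. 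By the Cameron--Martin formula, \eqref{eq: KL Gaussian} then gives the conditional divergence $\frac{1}{2\sigma^2}\|G(x)-G'(x)\|_{\mathcal Y}^2$. Integrating over $\gamma$ yields the claim.

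The main obstacle is measure-theoretic rather than conceptual. A $\mathcal Y$-white noise is not a $\mathcal Y$-valued random variable, so $Y_i$ must be realized as a process, for instance in sequence space via $Y_{i,k}=\langle G(X_i),e_k\rangle+\sigma\xi_{i,k}$ for an orthonormal basis $(e_k)$ of $\mathcal Y$. One then needs a regular conditional distribution and joint measurability of $x\mapsto$ the Cameron--Martin density in order to justify the chain rule. To handle this, I would work on $\mathcal X\times\R^{\N}$ with the product $\sigma$-algebra. There the conditional law of $(Y_{i,k})_k$ given $X_i=x$ is the product Gaussian $\bigotimes_k\mathcal N(\langle G(x),e_k\rangle,\sigma^2)$, which is a measurable kernel because $G$ is Bochner measurable. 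Since $G(x)-G'(x)\in\ell^2$ for $\gamma$-a.e. $x$, Kakutani's theorem gives equivalence of these product measures. The explicit log-likelihood ratio is then jointly measurable, and its expectation computes the KL divergence by Fubini.

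With the identity in hand, Lemma \ref{lemma: minimax metric entropy} applies with exactly the same choices $\alpha=1/\kappa_2+\delta$ and $\beta\in\{1/\kappa_2,1/\kappa_\infty\}$ as in the proof of Theorem \ref{theorem: Lower Bounds}. For $\mathcal Y=\R$ one takes $\beta\in\{1/\eta,1/\xi\}$ as in Theorem \ref{theorem: lower bounds functional}. This reproduces the stated rates for both the $L^2$- and the $L^\infty(\operatorname{supp}(\gamma))$-loss.
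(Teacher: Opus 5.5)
Your proposal is correct and takes essentially the same route as the paper. Both arguments reduce the theorem to the identity $\operatorname{KL}(\P_G,\P_{G'})=\frac{n}{2\sigma^2}\|G-G'\|_{L^2(\gamma,\mathcal Y)}^2$, obtained from the chain rule for KL with common design marginal $\gamma$ and Gaussian (Cameron--Martin) conditional laws, and then rerun Lemma \ref{lemma: minimax metric entropy} with the same exponents as in Theorems \ref{theorem: Lower Bounds} and \ref{theorem: lower bounds functional}. Your explicit tensorization over the $n$ i.i.d.\ pairs and your sequence-space/Kakutani treatment of measurability make rigorous two points that the paper's appendix passes over quickly.
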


\section{Upper Bounds for Projection Estimators}\label{section: upper bounds}

The approximation rates derived in Section ~\ref{subsection: approximation rate} yield corresponding algebraic rates for the statistical estimation problem. In this section, we construct spectral projection estimators in the generalized Gaussian white noise model based on truncations of generalized polynomial chaos expansion. We prove that they achieve the same rates as those which were obtained by empirical risk minimizers in \parencite{reinhardt2026statistical}, nearly matching the lower bounds in $L^2$-risk.

\begin{definition}
    Let $H$ be a separable Hilbert space, $I$ some countable index set and let $(h_k)_{k\in I} \subset H$ be a Riesz basis of $H$ with lower and upper Riesz constants $d_H, D_H$ and dual basis $(\widetilde h_k)_{k\in I} \subset H$. Assume that $\mathbb Y_h^{(n)}$ arises from a generalized Gaussian white noise model on $H$ in the sense of Definition \ref{def: general Gaussian White Noise}. For any $N \in \N$ and any index set $\Lambda_N \subset I$, define the projection 
    \begin{equation*}
        h_N = \sum_{i\in \Lambda_N} \langle h, \widetilde h_i\rangle_H h_i
    \end{equation*}
    and the projection estimator 
    \begin{equation*}
        \widehat h_N  \coloneqq \sum_{i\in \Lambda_N} \mathbb Y_h^{(n)}(\widetilde h_i) h_i
    \end{equation*}
    for all $h\in H$, and if the dependence of $N\in \N$ on $n\in \N$ is clear, we write $\widehat h_n = \widehat h_{N(n)}$.
\end{definition}

Given uniform, quantitative rates for the approximation error $\|h - h_N\|$, this yields the following upper bound for the estimation error. 

\begin{theorem}[Upper Bounds for Projection Estimators]\label{theorem: Upper Bounds Projection Estimators White Noise}
    Let $\mathbb Y_h^{(n)}$ arise from a generalized Gaussian white noise model on a separable Hilbert space $H$, and let $(h_i)_{i\in \N}\subset H$ be a Riesz basis of $H$ with lower and upper Riesz constants $d_H, D_H$ and dual Riesz basis $(\widetilde h_j)_{j\in \N}$. Let $\mathcal J\subset H$. Assume that for every $N \geq 2$, there exists an index set $\Lambda_N$ of cardinality $N$ (independent of $h\in \mathcal J$), such that
    \begin{align}\label{eq: tail bound}
        \sup_{h\in \mathcal J}\left\|h - h_N\right\|_H^2 \leq C N^{-\xi}\log(N)^{\zeta}
    \end{align}
    for some $C>0$, $\xi >0$ and $\zeta \in \R$.
    Then the projection estimator $\widehat h_N$ satisfies 
    \begin{equation*}
        \sup_{h\in \mathcal J}\mathbb E_{h}\left[\left \|h-\widehat h_N\right\|^2_{H}\right] \leq 
            CN^{-\xi}\log(N)^{\zeta}  +\frac{D_H^2}{d_H^2}\frac{\sigma^2}{n}N.
    \end{equation*}
    Moreover, choosing 
    \begin{equation*}
        N \simeq n^{1/(\xi + 1)} \log(n)^{\zeta/(\xi + 1)} 
    \end{equation*}
    yields
    \begin{equation}\label{eq: minimax rate general projection}
            \sup_{h\in \mathcal J} \mathbb E_{h}\left[\left \|h-\widehat h_n\right\|^2_{H}\right] \lesssim n^{-\xi/(\xi + 1)}\log(n)^{\zeta/(\xi + 1)}.
    \end{equation}
\end{theorem}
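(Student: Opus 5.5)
The plan is a bias--variance decomposition, exploiting the linearity of the white noise functional $\mathbb W_H$. Write
\begin{equation*}
    \widehat h_N = \sum_{i\in\Lambda_N}\langle h,\widetilde h_i\rangle_H h_i + \frac{\sigma}{\sqrt n}\sum_{i\in\Lambda_N}\mathbb W_H(\widetilde h_i)h_i = h_N + \frac{\sigma}{\sqrt n}\,\mathcal N_N,
\end{equation*}
where $\mathcal N_N$ is a centered Gaussian element of $\operatorname{span}\{h_i:i\in\Lambda_N\}$. Then
\begin{equation*}
    \E_h\|h-\widehat h_N\|_H^2 = \|h-h_N\|_H^2 - \frac{2\sigma}{\sqrt n}\E\langle h-h_N,\mathcal N_N\rangle_H + \frac{\sigma^2}{n}\E\|\mathcal N_N\|_H^2.
\end{equation*}
The cross term vanishes because $\mathcal N_N$ is centered and $h-h_N$ is deterministic. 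The bias term is bounded uniformly over $\mathcal J$ by \eqref{eq: tail bound}.

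For the variance term, apply the upper Riesz bound:
\begin{equation*}
    \|\sum_{i\in\Lambda_N} a_i h_i\|_H^2\le D_H^2\sum_{i\in\Lambda_N}|a_i|^2,
\end{equation*}
which gives $\E\|\mathcal N_N\|_H^2\le D_H^2\sum_{i\in\Lambda_N}\E|\mathbb W_H(\widetilde h_i)|^2 = D_H^2\sum_{i\in\Lambda_N}\|\widetilde h_i\|_H^2$, using the isonormal covariance $\E[\mathbb W_H(g)^2]=\|g\|_H^2$. It remains to bound $\|\widetilde h_i\|_H$. The dual basis is a Riesz basis with constants $1/D_H, 1/d_H$, so $\|\widetilde h_i\|_H\le 1/d_H$. (Directly: the synthesis operator $T$ satisfies $\|T\|\le D_H$, $\|T^{-1}\|\le 1/d_H$, and $\widetilde h_i=(T^{-1})^*e_i$.) This gives the variance bound $\frac{D_H^2}{d_H^2}\frac{\sigma^2}{n}N$ and hence the first claim. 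Possible correlations between the $\mathbb W_H(\widetilde h_i)$ do not matter, since only diagonal second moments enter after the Riesz bound.

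For the rate, balance the two terms: $N^{-\xi}\log(N)^\zeta \asymp N/n$. Choose $N\simeq n^{1/(\xi+1)}\log(n)^{\zeta/(\xi+1)}$ and note that $\log N\asymp \log n$ for this choice, since $N$ is a power of $n$ up to logarithmic factors. Substituting shows both terms are $\lesssim n^{-\xi/(\xi+1)}\log(n)^{\zeta/(\xi+1)}$. Concretely, $N/n = n^{-\xi/(\xi+1)}\log(n)^{\zeta/(\xi+1)}$, and $N^{-\xi}\log(N)^\zeta \asymp n^{-\xi/(\xi+1)}\log(n)^{-\xi\zeta/(\xi+1)+\zeta}$, which has the same exponent of $\log n$.

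The step needing the most care is the dual-basis norm bound $\|\widetilde h_i\|\le 1/d_H$, together with justifying that $\mathbb Y_h^{(n)}(\widetilde h_i)=\langle h,\widetilde h_i\rangle_H+\frac{\sigma}{\sqrt n}\mathbb W_H(\widetilde h_i)$ as the rigorous meaning of the formal model. The rest is routine.
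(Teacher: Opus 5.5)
Your proposal is correct and follows essentially the same route as the paper. Both arguments use the bias--variance decomposition via $\mathbb E[\widehat h_N]=h_N$, the upper Riesz bound on the noise sum, the dual-basis bound $\|\widetilde h_i\|_H\le 1/d_H$, and balancing the two terms. Your explicit check that $\log N\asymp\log n$ for the chosen $N$, so that both terms carry the same power $\log(n)^{\zeta/(\xi+1)}$, spells out a step the paper leaves implicit.
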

\begin{proof}
    Since $\mathbb E[\widehat h_N] = h_N$, it holds that
    \begin{align}\label{eq: bias-variance}
        \mathbb E_h\left[\left \| \widehat h_N - h\right\|_H^2\right] &= \left\|h - \mathbb E[\widehat h_N]\right\|_H^2 + \mathbb E_h\left[\|\widehat h_N - \mathbb E_h [\widehat h_N]\|_H^2\right] \\
        &=  \left\|h -  h_N\right\|_H^2 + \frac{\sigma^2}{n} \mathbb E_h\left[\left\|\sum_{i\in \Lambda_N} \mathbb W_H(\widetilde h_i)h_i\right\|_H^2\right].\nonumber
    \end{align}
    By the upper Riesz bound for $(h_i)_{i\in \N}$, 
    \begin{equation*}
        \left \|\sum_{i\in \Lambda_N} \mathbb W_H(\widetilde h_i)h_i\right\|^2_H \leq D_H^2 \sum_{i\in \Lambda_N}(\mathbb W_H(\widetilde h_i))^2,
    \end{equation*}
    and furthermore, since $1/d_H$ is an upper Riesz bound for $(\widetilde h_i)_{i\in \N}$,
    \begin{equation*}
        \mathbb E [\mathbb W_H(\widetilde h_i)^2] = \|\widetilde h_i\|_H^2 \leq \frac{1}{d_H^2}.
    \end{equation*}
    Therefore, 
    \begin{align}\label{eq: stochastic term}
        \mathbb E_h \left[\left \|\sum_{i\in \Lambda_N} \mathbb W_H(\widetilde h_i)h_i\right\|^2_H\right]  \leq \frac{D_H^2}{d_H^2} N.
    \end{align}
If $(h_i)_{i\in \N}$ is an orthonormal basis, $d_H = D_H = 1$ and the inequality is an equality. Combining this estimate with the approximation bound \eqref{eq: tail bound} proves the first claim. Balancing both terms in \eqref{eq: bias-variance} gives $N\asymp n^{1/(\xi + 1)}\log(n)^{\zeta/(\xi + 1)}$ and yields \eqref{eq: minimax rate general projection} after substitution back into \eqref{eq: bias-variance}. 
\end{proof}

We first consider the scalar-valued class of holomorphic functionals $\mathcal H^{s}(L; \C)$. Let $n\in \N$ be fixed, let $N(n)\in \N$ and $\Lambda_N\subset \mathcal F$ be a finite set of multi-indices. Corresponding to the (noise-free) projection $g_N$ (see \eqref{eq: Legendre projection}
), define the projection estimator
\begin{equation}\label{eq: L2 proj est functional}
    \widehat g_n\coloneqq\sum_{\nu\in\Lambda_N}\mathbb Y_{g_0}^{(n)}\left(L_\nu\circ\sigma_s^{-1}\right)L_\nu\circ\sigma_s^{-1} = g_N + \frac{\sigma}{\sqrt{n}}\sum_{\nu \in \Lambda_N}\mathbb W_{L^2(\gamma)}(L_\nu\circ\sigma_s^{-1})L_\nu\circ\sigma_s^{-1}.
\end{equation}
Note that, by definition of the white noise, $\mathbb W_{L^2(\gamma)}(L_\nu \circ \sigma_s^{-1}) \sim N(0, 1)$, since $\|L_\nu\circ \sigma_s^{-1}\|^2 _{L^2(\gamma)} = \|L_\nu\|^2_{L^2(Q, \pi)}  = 1$.

\begin{theorem}[Statistical upper bounds for holomorphic functionals]\label{theorem: statistical upper bounds functional}
  Let $s>1$, $L>0$, $\sigma>0$ and $g_0\in \mathcal H^{s}(L; \C)$. Let $\gamma$ be as in Assumption \ref{assumption: measure}.
  Assume that $\mathbb Y_{g_0}^{(n)}$ arises from the generalized Gaussian white noise model on $L^2(\gamma)$. Set $\eta = s-1/2$ and $\xi = s-1$. Then, for every $\delta>0$ and $N\in \N$, there exists a downward closed index set $\Lambda_N$, independent of $g_0$, and a constant $C_2>0$ (depending only on $\delta$, $s$ and $L$) such that
    \begin{equation}\label{eq: L2 bias variance}
        \sup_{g_0 \in \mathcal H^{s}(L; \C)}\mathbb E_{g_0} [\lVert \widehat g_N - g_0\rVert_{L^2(\gamma)}^2]\leq C_2 N^{-2\eta + \delta} + \frac{\sigma^2}{n}  N.
    \end{equation}
    In particular, choosing $N \simeq n^{1/(2\eta - \delta + 1)}$, we obtain, for every $\tau>0$, 
    \begin{equation*}
        \sup_{g_0 \in \mathcal H^{s}(L; \C)}\mathbb E_{g_0} [\lVert \widehat g_n - g_0\rVert_{L^2(\gamma)}^2]\lesssim  n^{-\frac{2\eta}{2\eta+1} + \tau}.
    \end{equation*}
Similarly, for every $\delta>0$, there exists a downward closed index set $\Lambda_N$ and a constant $C_\infty>0$, such that 
    \begin{equation}\label{eq: Linfty bias variance}
        \sup_{g_0 \in \mathcal H^{s}(L; \C)}\mathbb E_{g_0} [\lVert \widehat g_n - g_0\rVert_{\infty}]\leq C_\infty N^{-\xi + \delta} + \frac{\sqrt 2\sigma}{\sqrt{n\pi}}  N^{3/2}.
    \end{equation}
Choosing $N \simeq n^{1/(2\xi - 2\delta + 3)}$ yields, for every $\tau>0$,
    \begin{equation*}
        \sup_{g_0 \in \mathcal H^{s}(L; \C)}\mathbb E_{g_0} [\lVert \widehat g_n - g_0\rVert_{\infty}]\lesssim  n^{-\frac{\xi}{2\xi+3} + \tau} = n^{-\frac{s-1}{2s+1} + \tau}.
    \end{equation*}
\end{theorem}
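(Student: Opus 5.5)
The plan is a bias--variance decomposition of the projection estimator \eqref{eq: L2 proj est functional}, taking the index set $\Lambda_N$ from Theorem~\ref{theorem: realvalued holomorphic approximation} (equivalently, the first $N$ indices of the enumeration in Theorem~\ref{theorem: coefficient bounds}). This set is downward closed and independent of $g_0$.

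For the $L^2$-part, we apply Theorem~\ref{theorem: Upper Bounds Projection Estimators White Noise} with $H=L^2(\gamma)$ and the orthonormal basis $(L_\nu\circ\sigma_s^{-1})_{\nu\in\mathcal F}$, so that $d_H=D_H=1$. The projection estimator of that theorem is then exactly $\widehat g_n$. The tail bound \eqref{eq: tail bound} follows from \eqref{eq: L2 functional approximation}:
\begin{equation*}
\sup_{g_0}\|g_0-g_N\|_{L^2(\gamma)}^2\le C_2^2N^{-2\eta+2\delta}.
\end{equation*}
Renaming $2\delta$ as $\delta$ gives \eqref{eq: L2 bias variance}, where the variance term equals $\sigma^2N/n$ by orthonormality. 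Choosing $N\simeq n^{1/(2\eta-\delta+1)}$ balances the two terms and yields the rate $n^{-(2\eta-\delta)/(2\eta-\delta+1)}$. Since $\delta$ can be taken arbitrarily small, this is at most $n^{-2\eta/(2\eta+1)+\tau}$.

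For the $L^\infty$-part, we use the triangle inequality:
\begin{equation*}
\|\widehat g_n-g_0\|_\infty\le \|g_0-g_N\|_\infty+\frac{\sigma}{\sqrt n}\Big\|\sum_{\nu\in\Lambda_N}\xi_\nu L_\nu\circ\sigma_s^{-1}\Big\|_\infty,
\end{equation*}
where $\xi_\nu=\mathbb W_{L^2(\gamma)}(L_\nu\circ\sigma_s^{-1})$ are i.i.d.\ $N(0,1)$, because the $L_\nu\circ\sigma_s^{-1}$ are orthonormal. The bias is controlled by \eqref{eq: Linfty functional approximation} and gives $C_\infty N^{-\xi+\delta}$. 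For the stochastic term we bound crudely by $\sum_{\nu\in\Lambda_N}|\xi_\nu|\,\|L_\nu\|_\infty$, and \eqref{eq: Legendre Upper Bound} gives $\|L_\nu\|_\infty\le w_\nu^{1/2}$. Together with $\mathbb E|\xi_\nu|=\sqrt{2/\pi}$, this reduces the task to bounding $\sum_{\nu\in\Lambda_N}w_\nu^{1/2}$ by $N^{3/2}$.

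This last bound is the step I expect to be the main obstacle, and I would handle it using downward closedness. For a downward closed $\Lambda$, every $\nu\in\Lambda$ has its full box $\{\mu\le\nu\}$ contained in $\Lambda$, and that box has cardinality $\prod_j(1+\nu_j)$. Hence
\begin{equation*}
w_\nu=\prod_j(1+2\nu_j)\le\Big(\prod_j(1+\nu_j)\Big)^2\le |\Lambda_N|^2=N^2,
\end{equation*}
where the first inequality uses $1+2k\le(1+k)^2$. It follows that $w_\nu^{1/2}\le N$, and summing over the $N$ indices gives $\sum_{\nu\in\Lambda_N}w_\nu^{1/2}\le N^2$. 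This is weaker than the required $N^{3/2}$.

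To obtain $N^{3/2}$, I would instead use the known estimate $\sum_{\nu\in\Lambda}w_\nu\le|\Lambda|^2$ for downward closed sets (see e.g.\ Chkifa--Cohen--Schwab), which follows from the box argument by summing over each coordinate direction. Cauchy--Schwarz then gives
\begin{equation*}
\sum_{\nu\in\Lambda_N}w_\nu^{1/2}\le N^{1/2}\Big(\sum_{\nu\in\Lambda_N}w_\nu\Big)^{1/2}\le N^{3/2}.
\end{equation*}
This produces \eqref{eq: Linfty bias variance} with the constant $\sqrt2\sigma/\sqrt{n\pi}$.

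Finally, we balance $N^{-\xi+\delta}$ against $n^{-1/2}N^{3/2}$ by taking $N\simeq n^{1/(2\xi-2\delta+3)}$. The resulting rate is $n^{-(\xi-\delta)/(2\xi-2\delta+3)}$, which is at most $n^{-\xi/(2\xi+3)+\tau}$ for small $\delta$. Since $\xi=s-1$, this equals $n^{-(s-1)/(2s+1)+\tau}$.
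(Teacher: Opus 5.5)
Your proposal is correct and follows the paper's proof essentially step by step: the $L^2$ bound comes from Theorem~\ref{theorem: Upper Bounds Projection Estimators White Noise} with $d_H=D_H=1$, and the $L^\infty$ bound from the triangle inequality, $\|L_\nu\|_\infty\le w_\nu^{1/2}$, $\mathbb E|\xi_\nu|=\sqrt{2/\pi}$, and Cauchy--Schwarz combined with $\sum_{\nu\in\Lambda}w_\nu\le|\Lambda|^2$ from \parencite[Lemma 3.1]{chkifa2015discrete}. One small remark: your box argument alone only gives $\sum_{\nu\in\Lambda}w_\nu\le|\Lambda|^3$, and the sharper estimate instead comes from slicing $\Lambda$ along one coordinate into nested downward closed sets and inducting on the dimension, so it is best simply cited, as the paper does.
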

\begin{proof}
    For the $L^2$-bound \eqref{eq: L2 bias variance}
, Theorem \ref{theorem: realvalued holomorphic approximation} shows that there exists a downward closed $\Lambda_N$ and a constant $C_2>0$, independent of $g_0$, such that 
\begin{equation*}
    \sup_{g_0 \in \mathcal H^s(L;\C)} \|g_0 - g_N\|_{L^2(\gamma)}^2 \leq C_2 N^{-2\eta + \delta}.
\end{equation*}
Since $(L_\nu\circ \sigma_s^{-1})$ is an orthonormal basis of $H = L^2(\gamma)$, we may apply Theorem \ref{theorem: Upper Bounds Projection Estimators White Noise} with $d_H = D_H = 1$ to the projection estimator $\widehat g_N$, which yields \eqref{eq: L2 bias variance}. The two terms are balanced for $N \asymp n^{1/(2\eta - \delta + 1)}$. Choosing $\delta>0$ sufficiently small and substituting back into \eqref{eq: L2 bias variance} yields the claimed upper bound on the $L^2$-rate.

For the $L^\infty$-bound, Theorem \ref{theorem: realvalued holomorphic approximation} shows that there exists a downward closed set $\Lambda_N$ and a constant $C_\infty>0$, such that
\begin{equation*}
    \sup_{g_0 \in \mathcal H^s(L;\C)} \|g_0 - g_N\|_{\infty} \leq C_\infty N^{-\xi + \delta}.
\end{equation*}
By the triangle inequality, 
    \begin{align*}
        \mathbb E_{g_0}\left[\|\widehat g_N - g_0\|_\infty\right] \leq \|g_N - g_0\|_\infty + \mathbb E_{g_0}\left[\|\hat g_N - g_N\|_\infty\right].
    \end{align*}
    Let $\xi_\nu\coloneqq \mathbb W_{L^2(\gamma)}(L_\nu \circ \sigma_s^{-1})$. Since $(L_\nu \circ \sigma_s^{-1})_{\nu \in \mathcal F}$ is orthonormal in $L^2(\gamma)$, the $\xi_\nu$ are i.i.d. standard normal variables. The second term can be controlled by
    \begin{align*}
        E_{g_0}\left[\|\hat g_N - g_N\|_\infty\right]
        &\leq\frac{\sigma}{\sqrt{n}}\mathbb E_{G_0}\left[ \left\|\sum_{\nu \in \Lambda_{N}}\xi_\nu (L_\nu\circ \sigma_s^{-1} )\right\|_{_{\infty}}\right] \\
        &\leq \frac{\sigma}{\sqrt{n}} \sum_{\nu \in  \Lambda_{N}} \|L_\nu\|_{\infty}\mathbb E[|\xi_\nu|] \\
        &\leq \frac{\sqrt 2 \sigma}{\sqrt{n\pi}} \sum_{\nu \in \Lambda_N} w_\nu^{1/2},
    \end{align*}
    where we used the triangle inequality and that $\mathbb E{|\xi_\nu|} = \sqrt{2/\pi}$ . Lastly, by the Cauchy-Schwarz inequality and \parencite[Lemma 3.1]{chkifa2015discrete}, 
    \begin{equation*}
        \sum_{\nu \in \Lambda_N} w_\nu^{1/2} \leq |\Lambda_N|^{1/2} \left( \sum_{\nu\in\Lambda_N}w_\nu \right)^{1/2} \leq|\Lambda_N|^{3/2} = N^{3/2},
    \end{equation*}
    which proves \eqref{eq: Linfty bias variance}. Both terms are balanced for $N\asymp n^{1/(2\xi -2\delta + 3)}$ and choosing $\delta>0$ sufficiently small yields the claimed bound on the $L^\infty$-rate.
\end{proof}

Next, we consider the class of $\mathcal Y$-valued, holomorphic operators $\mathcal H^{s,t}(L;\mathcal Y_{\C})$. Let $\Phi = (\phi_j)_{j\in \N}$ be a Riesz basis of $\mathcal Y$ with dual basis $\widetilde\Phi = (\widetilde\phi_j)_{j\in \N}$. Let $n\in \N$ be fixed, let $N(n)\in \N$ and let $(\Lambda_{N,j})_{j\leq N}, (\Lambda^\infty_{N,j})_{j\leq N} \subset \mathcal F$ be two families of finite sets of multi-indices. For $x\in \sigma_s(Q)$, define 
\begin{equation}\label{eq: L2 proj est operator}
    \widehat {G}_{N} (x)\coloneqq \sum_{j=1}^\infty\sum_{\nu \in \Lambda_{N,j}} \mathbb Y_{G_0}^{(n)}([L_\nu\circ\sigma_s^{-1} ]\otimes \widetilde \phi_j) L_\nu(\sigma_s^{-1}(x))\phi_j
\end{equation}
and 
\begin{equation}\label{eq: Linfty proj est operator}
    \widehat {G}_{N}^\infty (x)\coloneqq \sum_{j=1}^\infty\sum_{\nu \in  \Lambda_{N,j}^{\infty}} \mathbb Y_{G_0}^{(n)}( [L_\nu\circ\sigma_s^{-1}] \otimes \widetilde \phi_j) L_\nu(\sigma_s^{-1}(x)) \phi_j.
\end{equation}
If the dependence of $N\in \N$ on $n\in \N$ is clear, we write $\widehat G_n = \widehat G_{N(n)}$ and $\widehat G_{n}^\infty = \widehat G_{N(n)}^\infty$.

\begin{theorem}\label{theorem: statistical upper bounds operator}
    Let $s>1$, $t>0$, $L>0$ and $\sigma>0$, let $G_0 \in \mathcal H^{s,t}(L; \mathcal Y_{\C})$ and let $\gamma$ be as in Assumption \ref{assumption: measure}. Assume that $\mathbb Y_{G_0}^{(n)}$ arises from the generalized Gaussian white noise model on $L^2(\gamma, \mathcal Y)$ and set $\kappa_2 = \min \{s-1/2, t\}$ and $\kappa_\infty = \min \{s-1,t\}$. Then, for every $\delta>0$ and $N\in \N$, there exist a constant $C_2>0$ and  downward closed sets $(\Lambda_{N,j})_{j\in \N}$, independent of $G_0$ and satisfying $\sum_{j=1}^\infty|\Lambda_{N,j}|\leq N$, such that 
    \begin{equation}\label{eq: L2 bias variance operator}
        \sup_{G_0 \in \mathcal H^{s,t}(L; \mathcal Y_{\C})}\mathbb E_{G_0}\left[\lVert \widehat {G}_N - G_0\rVert^2_{L^2(\gamma, \mathcal Y)}\right]\leq  
            C_2N^{-2\kappa_2 + \delta} + \frac{D_{\Phi}^2}{d_\Phi^2}\frac{\sigma^2}{n}N.
    \end{equation}
Choosing $N \simeq n^{1/(2\kappa_2 - \delta + 1)}$, we obtain for every $\tau>0$, that
    \begin{equation*}
        \sup_{G_0 \in \mathcal H^{s,t}(L; \mathcal Y_{\C})}\mathbb E_{G_0} [\lVert \widehat G_n - G_0\rVert_{L^2(\gamma, \mathcal Y)}^2]\lesssim  n^{-\frac{2\kappa_2}{2\kappa_2+1} + \tau}.
    \end{equation*}
Similarly, there exist a constant $C_\infty>0$ and downward closed sets $( \Lambda_{N,j}^\infty)_{j\in \N}$, independent of $G_0$, satisfying $\sum_{j=1}^\infty| \Lambda_{N,j}^\infty|\leq N$, such that 
    \begin{equation}\label{eq: Linfty bias variance operator}
        \mathbb E_{G_0}\left[\lVert \widehat {G}_N^\infty - G_0\rVert_{\infty, \mathcal Y}\right]\lesssim  
            C_\infty N^{-\kappa_{\infty} + \delta} + \frac{D_\Phi}{d_\Phi}\frac{\sigma}{\sqrt{n}}N^{3/2}.
    \end{equation}
In particular, choosing $N \simeq n^{1/(2\kappa_{\infty} - 2\delta + 3)}$ yields for every $\tau>0$, that
    \begin{equation*}
        \sup_{G_0 \in \mathcal H^{s,t}(L; \mathcal Y_{\C})}\mathbb E_{G_0} [\lVert \widehat G_n^\infty - G_0\rVert_{\infty, \mathcal Y}]\lesssim  n^{-\frac{\kappa_{\infty}}{2\kappa_{\infty}+3} + \tau}.
    \end{equation*}
\end{theorem}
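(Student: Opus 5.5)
The plan is to mirror the proof of Theorem \ref{theorem: statistical upper bounds functional}, replacing $L^2(\gamma)$ by the tensor-product space $H = L^2(\gamma,\mathcal Y)\simeq L^2(\gamma)\otimes\mathcal Y$. The natural Riesz basis of $H$ is $(h_{\nu,j})_{(\nu,j)\in\mathcal F\times\N}$ with $h_{\nu,j}\coloneqq (L_\nu\circ\sigma_s^{-1})\otimes\phi_j$. Since $(L_\nu\circ\sigma_s^{-1})_\nu$ is orthonormal in $L^2(\gamma)$ and $\Phi$ is a Riesz basis of $\mathcal Y$ with constants $d_\Phi,D_\Phi$, this family is a Riesz basis of $H$ with the same constants. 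Its dual basis is $(L_\nu\circ\sigma_s^{-1})\otimes\widetilde\phi_j$. The coefficient $\langle G_0,(L_\nu\circ\sigma_s^{-1})\otimes\widetilde\phi_j\rangle_H$ equals $g_{\nu,j}$ from \eqref{eq: scalar valued Legendre coefficients}. Hence $\widehat G_N$ in \eqref{eq: L2 proj est operator} is exactly the projection estimator $\widehat h_N$ for the index set $\Lambda_N\coloneqq\{(\nu,j):\nu\in\Lambda_{N,j}\}$, and its noise-free counterpart is $G_N$ from \eqref{eq: L2 gen Legendre projection}.

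\textbf{$L^2$ part.} Take the downward closed sets $(\Lambda_{N,j})_j$ from Theorem \ref{theorem: operator approximation rate}, applied with $\delta/2$ in place of $\delta$. They are independent of $G_0$, satisfy $|\Lambda_N|=\sum_j|\Lambda_{N,j}|\le N$, and give $\sup_{G_0}\|G_0-G_N\|_H^2\le C_2N^{-2\kappa_2+\delta}$. This is hypothesis \eqref{eq: tail bound} with $\xi=2\kappa_2-\delta$ and $\zeta=0$. Theorem \ref{theorem: Upper Bounds Projection Estimators White Noise}, with $D_H/d_H=D_\Phi/d_\Phi$, then yields \eqref{eq: L2 bias variance operator}; the cardinality bound $|\Lambda_N|\le N$ suffices in \eqref{eq: stochastic term}. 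Balancing the two terms with $N\simeq n^{1/(2\kappa_2-\delta+1)}$ gives the rate $n^{-(2\kappa_2-\delta)/(2\kappa_2-\delta+1)}$. Choosing $\delta$ small relative to $\tau$ completes the $L^2$ claim.

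\textbf{$L^\infty$ part.} Take $(\Lambda_{N,j}^\infty)_j$ from Theorem \ref{theorem: operator approximation rate}. The estimator \eqref{eq: Linfty proj est operator} has mean
\[
\sum_j\sum_{\nu\in\Lambda^\infty_{N,j}} g_{\nu,j}L_\nu\circ\sigma_s^{-1}\,\phi_j=G_N^\infty,
\]
since $g_{\nu,j}L_\nu=g^\infty_{\nu,j}P_\nu$. By the triangle inequality,
\[
\mathbb E\|\widehat G_N^\infty-G_0\|_{\infty,\mathcal Y}\le \|G_N^\infty-G_0\|_{\infty,\mathcal Y}+\frac{\sigma}{\sqrt n}\,\mathbb E\sup_{x}\Big\|\sum_{j}\sum_{\nu\in\Lambda^\infty_{N,j}}\xi_{\nu,j}L_\nu(\sigma_s^{-1}(x))\phi_j\Big\|_{\mathcal Y},
\]
where $\xi_{\nu,j}=\mathbb W_H((L_\nu\circ\sigma_s^{-1})\otimes\widetilde\phi_j)$ are centered Gaussians with variance at most $d_\Phi^{-2}$. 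The first term is at most $C_\infty N^{-\kappa_\infty+\delta}$. For the noise term, fix $x$ and apply the upper Riesz bound:
\[
\Big\|\sum_{j}\sum_{\nu}\xi_{\nu,j}L_\nu(\sigma_s^{-1}(x))\phi_j\Big\|_{\mathcal Y}\le D_\Phi\sum_j\Big|\sum_{\nu\in\Lambda^\infty_{N,j}}\xi_{\nu,j}L_\nu(\sigma_s^{-1}(x))\Big|\le D_\Phi\sum_j\sum_{\nu\in\Lambda^\infty_{N,j}}|\xi_{\nu,j}|\,w_\nu^{1/2}.
\]
The right-hand side no longer depends on $x$, so its expectation is at most $\sqrt{2/\pi}\,(D_\Phi/d_\Phi)\sum_j\sum_{\nu\in\Lambda^\infty_{N,j}}w_\nu^{1/2}$.

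\textbf{Main obstacle.} The remaining step is to bound $\sum_j\sum_{\nu\in\Lambda^\infty_{N,j}}w_\nu^{1/2}$ by $N^{3/2}$. I would apply \parencite[Lemma 3.1]{chkifa2015discrete} to each downward closed set $\Lambda^\infty_{N,j}$ separately, giving $\sum_{\nu\in\Lambda^\infty_{N,j}}w_\nu^{1/2}\le|\Lambda^\infty_{N,j}|^{3/2}$, and then use superadditivity of $m\mapsto m^{3/2}$ to obtain $\sum_j|\Lambda^\infty_{N,j}|^{3/2}\le(\sum_j|\Lambda^\infty_{N,j}|)^{3/2}\le N^{3/2}$. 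This step needs each $\Lambda^\infty_{N,j}$ to be downward closed, which Theorem \ref{theorem: operator approximation rate} guarantees. Together with the previous bounds it yields \eqref{eq: Linfty bias variance operator}. Balancing $N^{-\kappa_\infty+\delta}$ against $n^{-1/2}N^{3/2}$ gives $N\simeq n^{1/(2\kappa_\infty-2\delta+3)}$ and the rate $n^{-\kappa_\infty/(2\kappa_\infty+3)+\tau}$ once $\delta$ is small enough.
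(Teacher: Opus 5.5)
Your proposal is correct and follows the paper's proof essentially step by step. The $L^2$ part uses the Riesz basis $((L_\nu\circ\sigma_s^{-1})\otimes\phi_j)$ of $L^2(\gamma,\mathcal Y)$ with constants $d_\Phi,D_\Phi$ together with Theorem~\ref{theorem: Upper Bounds Projection Estimators White Noise}. The $L^\infty$ part uses the triangle inequality, $\|L_\nu\|_\infty\le w_\nu^{1/2}$, the cited Lemma~3.1 applied to each downward closed $\Lambda^\infty_{N,j}$, and superadditivity of $m\mapsto m^{3/2}$. Your bookkeeping is slightly cleaner than the paper's: you use $\delta/2$ so the squared error really is $N^{-2\kappa_2+\delta}$, you identify the mean of $\widehat G_N^\infty$ as $G_N^\infty$ via $g_{\nu,j}L_\nu=g^\infty_{\nu,j}P_\nu$, and you balance the $L^2$ terms with the correct choice $N\simeq n^{1/(2\kappa_2-\delta+1)}$.
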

\begin{proof}
Since $(L_\nu\circ \sigma_s^{-1})_{\nu \in \mathcal F}$ is an orthonormal basis of $L^2(\gamma)$ and $(\phi_j)_{j\in \N}$ is a Riesz basis of $\mathcal Y$, $((L_\nu\circ \sigma_s^{-1}) \otimes \phi_j)$ is a Riesz basis of $H = L^2(\gamma, \mathcal Y) \simeq L^2(\gamma)\otimes \mathcal Y$,  with lower and upper Riesz constant $d_\Phi, D_\Phi$.
By Theorem \ref{theorem: operator approximation rate}, there exists a constant $C_2>0$ and downward closed sets $(\Lambda_{N,j})_{j\leq N}$ which satisfy $\sum_{j=1}^\infty |\Lambda_{N,j}|\leq N$, and such that
\begin{equation*}
    \sup_{G_0 \in \mathcal H^{s,t}(L; \mathcal Y_{\C})} \| G_N - G_0 \|_{L^2(\gamma, \mathcal Y)}^2 \leq C_2 N^{-2\kappa_2 + \delta}.
\end{equation*}
Applying Theorem \ref{theorem: Upper Bounds Projection Estimators White Noise} to the projection estimator $\widehat G_N$ proves \eqref{eq: L2 bias variance operator}. Both terms are balanced for  $N \simeq n^{1/(2\kappa_{\infty} - 2\delta + 3)}$ and choosing $\delta>0$ sufficiently small yields the claimed $L^2$-upper bound the statistical convergence rate.

For the $L^\infty$-bound, Theorem \ref{theorem: operator approximation rate} implies that there exists a constant $C_\infty>0$ and downward closed sets $(\Lambda_{N,j}^\infty)_{j\leq N}$ which satisfy $\sum_{j=1}^\infty |\Lambda_{N,j}^\infty|\leq N$, and such that
\begin{equation*}
    \sup_{G_0 \in \mathcal H^{s,t}(L; \mathcal Y_{\C})} \| \widetilde G_N - G_0 \|_{\infty, \mathcal Y} \leq C_\infty N^{-\kappa_\infty + \delta}.
\end{equation*}
Define 
\begin{equation*}
    Z_{\nu,j} = \mathbb W_{L^2(\gamma, \mathcal Y)}((L_\nu\circ \sigma_s^{-1})\otimes \widetilde\phi_j),
\end{equation*}
which by definition of Gaussian white noise has distribution $Z_{\nu,j}\sim N(0, \|\widetilde\phi_j\|_{\mathcal Y}^2)$ and thus $\mathbb E[|Z_{\nu,j}|] = \sqrt{2/\pi} \|\widetilde\phi_j\|_{\mathcal Y}$. By the triangle inequality, 
\begin{equation*}
    \mathbb E_{G_0}[\|\hat G_N^\infty - G_0\| _{\infty, \mathcal Y}] \leq \|\widetilde G_N - G_0\|_{\infty, \mathcal Y} + \mathbb E_{G_0}[\|\widehat G_N^\infty - \widetilde G_N\|_{\infty, \mathcal Y}].
\end{equation*}
and by \parencite[Lemma 3.1]{chkifa2015discrete} and again the triangle inequality
\begin{align*}
    \mathbb E[\|\widehat G_N^\infty - \widetilde G_N\|_{\infty, \mathcal Y}]&= \frac{\sigma}{\sqrt{n}}\mathbb E_{G_0}\left[ \left\|\sum_{j=1}^\infty \sum_{\nu \in \Lambda_{N,j}^\infty}Z_{\nu,j}(L_\nu\circ \sigma_s^{-1}) \otimes  \phi_j\right\|_{\infty, \mathcal Y}\right] \\
    &\leq \frac{\sqrt 2\sigma}{\sqrt{n\pi}}\sum_{j=1}^\infty \sum_{\nu \in \Lambda_{N,j}^\infty} \|L_\nu\|_{\infty}\|\widetilde \phi_j\|_{\mathcal Y}\|\phi_j\|_{\mathcal Y} \\
    &\leq \frac{\sqrt 2\sigma}{\sqrt{n\pi}} \frac{D_\Phi}{d_\Phi}\sum_{j=1}^\infty \sum_{\nu \in \Lambda_{N,j}^\infty} w_\nu^{1/2}\\
    &\leq \frac{\sqrt 2\sigma}{\sqrt{n\pi}} \frac{D_\Phi}{d_\Phi}   \sum_{j=1}^\infty |\Lambda_{N,j}^\infty|^{3/2}\\
    &\leq  \frac{\sqrt 2\sigma}{\sqrt{n\pi}} \frac{D_\Phi}{d_\Phi} (\sum_{j=1}^\infty |\Lambda_{N,j}^\infty|)^{3/2}\\
    &\leq  \frac{\sqrt 2\sigma}{\sqrt{n\pi}} \frac{D_\Phi}{d_\Phi} N^{3/2}.
\end{align*}
Balancing both terms and choosing $\delta>0$ sufficiently small completes the proof.
\end{proof}

\begin{remark}
    The same $L^2$- upper bounds were obtained in the random design white noise model \eqref{eq: random design white noise} in \parencite[Theorem 3.15]{reinhardt2026statistical} for empirical risk minimizers over classes of neural operator architectures with ReLU activations, called \textit{Sparse FrameNets}. As shown in Section \ref{section: lower bounds}, these $L^2$ upper bounds are matched (up to an arbitrarily small loss in the exponent) by the lower bound and the estimators thus nearly achieve the minimax-optimal rate over the class of operators $\mathcal H^{s,t}(L; \mathcal Y_{\C})$. Establishing sharp $L^\infty$-upper bounds would require sharper control of the stochastic error term arising from the noisy observations.
\end{remark}

\section{Application to Parametric Elliptic PDEs}\label{section: examples}

To give a concrete example, we recall and continue the discussion of the standard parametric elliptic PDE on the torus from \cite[Section 7.1]{herrmann_neural_2024}. Let $d\in\N$ and denote by $\mathbb{T}^d$ the $d$-dimensional torus. We consider the second order PDE
\begin{equation*}
  -\nabla\cdot(a\nabla u)=f\qquad\text{on}\qquad\mathbb{T}^d.
\end{equation*}
Here $f\in C^\infty(\mathbb{T}^d)$ with $\int_{\mathbb{T}^d}f=0$, and
$a\in L^\infty(\mathbb{T}^d)$ is a complex-valued diffusion coefficient
whose real part is uniformly bounded from below by a positive constant. Standard elliptic PDE theory implies the existence of a unique solution $u\in H^1(\mathbb{T}^d)$ such that $\int_{\mathbb{T}^d}u=0$, which we denote in the following by $G_0(a)$, i.e.\ $G_0:a\mapsto u$ is the solution operator. If we assume additional regularity on $f$, such as $f\in C^\infty$, then a shift theorem holds, and higher regularity in $a$ transfers to higher regularity of the solution $G(a)$, as we explain next.

With a suitable choice of (Fourier) basis on the torus, and fixing
$s_0$, $t_0\ge 0$, one can show that the function spaces in
\eqref{def: scale spaces} correspond to
\begin{equation*} \mathcal{X}:=H^{s_0}(\mathbb{T}^d),\qquad\mathcal{Y}:=H^{t_0}(\mathbb{T}^d),\qquad\mathcal{X}^s=H^{s_0+sd}(\mathbb{T}^d),\qquad\mathcal{Y}^t=H^{t_0+td}(\mathbb{T}^d).
\end{equation*}
Let $d\geq 2$, $\alpha > \frac{3d}{2}$ and $t_{0}\in [0,1]$.  Fix
$\tau>0$ and $\tau_1\in \left( 0, \alpha - 3d/2 \right)$, $\delta>0$
all arbitrarily small, and set
$$
    s_{0} = \begin{cases}
    \alpha - d - \tau_1& if\; \alpha \in (\frac{3d}{2}, 2d + t_{0} - 1]\\ 
    \frac{ \alpha + t_{0} - 1}{2} & if \; \alpha> 2d + t_{0} - 1,
            \end{cases}
            \qquad
            s = \frac{\alpha - s_0}{d}, \qquad t = \frac{1 + s_0  -d/2 - t_0}{d} - \delta.
$$
Well-known holomorphy of the
    solution operator
    \cite{cohen_convergence_2010,cohen_analytic_2011} together with
    classical PDE regularity theory \cite{MR162050} then allow to show
    \cite[Proposition~3, Step~1]{herrmann_neural_2024}
\begin{equation*}
  G_0\in \mathcal{H}^{s,t}(L;\mathcal{Y}_{\C}),
\end{equation*}
for some $L>0$ and suitable sets $O_{\C}$ and $\mathcal{D}_{\C}$
in Definition \ref{def: holomorphic operator}
(constraining the real part of the diffusion coefficients in
particular to remain uniformly positive). The design measure $\gamma$
is in the following as in Assumption~\ref{assumption: measure},
shifted by some uniformly positive $\bar a\in L^\infty(\mathbb{T}^d)$,
such that its support lies in $O_{\C}$. For complete definitions and
statements, see \cite[Section 7.1]{herrmann_neural_2024}.

Let us now explain our results in this context. 
\paragraph{$L^2$-estimation}:
First, in the generalized white noise model from Section \ref{subsection: white noise} and for the $L^2$-loss, according to Theorem~\ref{theorem: statistical upper bounds operator}, the projection estimator $\widehat G_n$ \eqref{eq: L2 proj est operator} from Section~\ref{section: upper bounds} satisfies
\begin{subequations}\label{eq:torusupper}
\begin{equation}
  \mathbb{E}_{G_0}\big[\|\widehat G_n-G_0\|_{L^2(\gamma,\mathcal{Y})}^2\big]\lesssim n^{-\frac{2\kappa_2}{2\kappa_2+1}+\tau}
\end{equation}
with
\begin{equation}
  \kappa_2=\min\{s-1/2,t\}\geq
  \begin{cases}
    \frac{\alpha+1-t_0}{d}-\frac{3}{2} & \text{if }\alpha\in(\frac{3d}{2}, 2d+t_0-1],\\
    \frac{\alpha+1-t_0}{2d}-\frac{1}{2} & \text{if }\alpha>2d+t_0-1,
  \end{cases}
  \;-\frac{\tau_1}{d}-\delta.
\end{equation}
\end{subequations}
Second, if we want to recover the PDE solution operator among all holomorphic operators in $\mathcal{H}^{s,t}(L;\mathcal{Y}_{\C})$, Theorem~\ref{theorem: Lower Bounds} gives the lower bound
\begin{equation*}
  \inf_{\widehat G_n}\sup_{G\in\mathcal{H}^{s,t}(L;\mathcal{Y}_{\C})}\mathbb{E}_{G}\big[\|\widehat G_n-G\|_{L^2(\gamma,\mathcal{Y})}^2\big]\gtrsim n^{-\frac{2\kappa_2}{2\kappa_2+1}-\tau}.
\end{equation*}
Importantly, this matches the upper bound \eqref{eq:torusupper} up to
(the arbitrarily small) $\tau>0$. We also note that this is the same
convergence rate obtained in
\cite[Theorem~3.15]{reinhardt2026statistical} for empirical risk
minimization over a class of neural network based
operators\footnote{The choice of $s_0$ in
  \cite[Theorem~4.2]{reinhardt2026statistical} was not optimal for the case $\alpha\le 2d+1-t_0$. The choice here leads to the same rate in \cite[Theorem~4.2]{reinhardt2026statistical}.}.  Hence, improving this rate
would require exploiting structure beyond holomorphy in the sense of
membership in $\mathcal{H}^{s,t}$ (see Definition \ref{def: holomorphic operator}).  

\paragraph{$L^\infty$-estimation:} Next, we consider estimation with respect to the $L^\infty$-loss, in which case the approximation exponent is $\kappa_\infty = \min\{s-1,t\}$, see \parencite[Proposition~3, Step~2]{herrmann_neural_2024}. To optimize the exponent, we choose $s_0$ differently here to maximize the convergence rate. More precisely, choose
\begin{equation}\label{eq: s0 Linfty}
    s_{0} = \begin{cases}
    \frac{d}{2}+ \tau_1& if\; \alpha \in (\frac{3d}{2}, 2d + t_{0} - 1]\\ 
    \frac{ \alpha + t_{0} - 1}{2} & if \; \alpha> 2d + t_{0} - 1,
    \end{cases}
    \qquad
    s = \frac{\alpha - s_0}{d}, \qquad t = \frac{1 + s_0  -d/2 - t_0}{d} - \delta
\end{equation}
as in \parencite[Proposition 3]{herrmann_neural_2024}.
The holomorphy result \parencite[Proposition~3, Step~1]{herrmann_neural_2024} again implies that 
\begin{align*}
    G_0 \in \mathcal H^{s,t}(L;\mathcal Y_{\C})
\end{align*}
for suitable choices of $L>0$, $\mathcal O_{\C}$ and $\mathcal D_{\C}$. In the generalized white noise model, Theorem~\ref{theorem: statistical upper bounds operator} yields, for the projection estimator $\widehat G_n^\infty$ from \eqref{eq: Linfty proj est operator},
\begin{equation*}
    \mathbb E_{G_0}\left[\|\widehat G_n^\infty - G_0\|_{\infty, \mathcal Y}\right]\lesssim n^{-\frac{\kappa_{\infty}}{2\kappa_{\infty} + 3} + \tau}
\end{equation*}
with 
\begin{equation*}
    \kappa_{\infty} \geq \begin{cases}
        \frac{\alpha}{d} - \frac{3}{2}& \text{if }\alpha \in \left(\frac{3d}{2}, \frac{3d}{2}+ 1 -t_0\right]\\
        \frac{\alpha + 1 -t_0}{2d} - \frac{3}{4}& \text{if } \alpha > \frac{3d}{2} + 1 -t_0
    \end{cases} - \frac{\tau_1}{d} - \delta.
\end{equation*}
In contrast to the $L^2$-case, the resulting upper bound does not in general match the information-theoretic lower bound over $\mathcal H^{s,t}(L; \mathcal Y_{\C})$. Writing 
\begin{equation*}
    \widetilde \kappa_2 = \min \{s-1/2,t\},
\end{equation*}
with the choice of $s_0, s$ and $t$ as in \eqref{eq: s0 Linfty}, Theorem~\ref{theorem: Lower Bounds} yields the lower bound 
\begin{equation*}
    \inf_{\hat G_n}\sup_{G \in \mathcal H^{s,t}(L;\mathcal Y_{\C})}\mathbb{E}_{G}\left[ \lVert \widehat{G}_{n} - G \rVert _{\infty, \mathcal Y}\right] \gtrsim n^{-\frac{\kappa_\infty}{2\widetilde\kappa_2+1}-\tau}
\end{equation*}
over $\mathcal H^{s,t}(L;\mathcal Y_{\C})$.
\paragraph{Scalar quantities-of-interest: }
Finally, we apply the same framework to scalar quantities-of-interest of the PDE solution. Let $O \in \mathcal Y'$ be a bounded, linear observation operator with $\|O\|_{\mathcal{Y'}} =1$ and define $g_0 = O\circ G_0$. Since $O$ is bounded and linear, $G_0 \in \mathcal H^{s,t}(L;\mathcal Y_{\C})$ implies $g_0 \in\mathcal H^{s}(L', \C)$ for some $L'>0$. Choose 
\begin{equation*}
    s_0 = \frac{d}{2} + \tau_1,\qquad s = \frac{\alpha - s_0}{d} = \frac{\alpha}{d}-\frac{1}{2} - \frac{\tau_1}{d} 
\end{equation*}
and set 
\begin{equation*}
    \eta = s-\frac{1}{2} = \frac{\alpha}{d}-1 - \frac{\tau_1}{d}, \qquad \xi = s-1 = \frac{\alpha}{d}- \frac{3}{2} - \frac{\tau_1}{d}.
\end{equation*}
In the generalized Gaussian white noise model on $L^2(\gamma)$, Theorem~\ref{theorem: statistical upper bounds functional} implies that the projection estimator \eqref{eq: L2 proj est functional} satisfies 
\begin{equation}\label{eq: functional torus upper}
    \mathbb E_{g_0}\|\widehat g_n-g_0\|_{L^2(\gamma)}\lesssim n^{-\frac{\eta}{2\eta+1}+\tau}.
\end{equation}
and if we want to recover $g_0$ among all holomorphic functionals $\mathcal H^{s}(L';\C)$, Theorem~\ref{theorem: lower bounds functional} yields the lower bound
\begin{equation*}
    \inf_{\widehat g_n}\sup_{g\in\mathcal H^{s}(L'; {\C})}\mathbb E_{g}\|\widehat g_n-g\|_{L^2(\gamma)} \gtrsim n^{-\frac{\eta}{2\eta+1}-\tau}.
\end{equation*}
Again, as for the Hilbert-space-valued case, this matches the upper bound \eqref{eq: functional torus upper} up to (an arbitrarily small) $\tau>0$. For the $L^\infty$-norm, Theorem \ref{theorem: statistical upper bounds functional} yields that the projection estimator $\widehat g_n$ satisfies
\begin{equation*}
    \mathbb E_{g_0}\|\widehat g_n-g_0\|_{\infty}\lesssim
        n^{-\frac{\xi}{2\xi+3}+\tau}
\end{equation*}
with the  lower bound 
\begin{equation*}
    \inf_{\widehat g_n}\sup_{g\in\mathcal H^{s}(L'; {\C})}\mathbb E_{g}\|\widehat g_n-g\|_{\infty}\gtrsim n^{-\frac{\xi}{2\eta+1}-\tau}
\end{equation*}
over $\mathcal H^{s}(L';\C)$.

\section{Conclusion and Further Directions}

In this paper, we derived information-theoretic lower bounds for the sample complexity for estimating holomorphic functionals $g_0: \mathcal X \to \R$ and operators $G_0: \mathcal X\to \mathcal Y$ under the \textit{random design white noise model} and the \textit{operator white noise model}. In the latter model, the projection-type estimators developed in this paper attain $L^2$-convergence rates that match, up to an arbitrarily small loss in the exponent, the corresponding lower bounds over a class of holomorphic operators. In the random-design setting, these algebraic convergence rates were previously established by \textcite{reinhardt2026statistical} for empirical risk minimizers over classes of Sparse FrameNets for a related class of holomorphic operators.

Therefore, our results establish algebraic minimax rates for the estimation of holomorphic operators in the presence of observation noise, which complements earlier lower bounds and optimal recovery results for operator learning of holomorphic maps without observation noise, as well as recent information-theoretic lower bounds for noisy operator learning of Lipschitz and Hölder operators.

In particular, this resolves, for the class of holomorphic operators, the open question highlighted by \textcite{brugiapaglia2026short}, whether the rates obtained in \parencite{reinhardt2026statistical} are minimax optimal.

We also demonstrated the applicability of our results by considering examples of coefficient-to-solution maps as well as coefficient-to-quantity-of-interest maps arising from linear elliptic partial differential equations. We note that the holomorphy assumption is well-established for a wide array of models including general elliptic PDEs \parencite{chkifa_breaking_2015, cohen_convergence_2010, cohen_analytic_2011}, parabolic PDEs \parencite{hoang2013sparse}, Maxwell's equations \parencite{jerez2017electromagnetic}, the Calderon projector \parencite{henriquez2021shape}, the Helmholtz equation \parencite{hiptmair2018large} and Navier-Stokes-equations \parencite{cohen2018shape}. Several questions remain open. It would be natural to extend the minimax theory of this paper to \textit{fixed-design regression}, since sparse-grid collocation methods are known to achieve comparable $L^\infty$- approximation rates, and to \textit{other $L^p$-type losses} for $p\neq 2$ and $p \neq \infty$. Another possible direction is to investigate whether the \textit{asymptotic equivalence in the sense of Le Cam} holds for the random design white noise model and the generalized Gaussian white noise model. Finally, proving lower bounds for holomorphic maps on Gaussian spaces would be of particular interest. In that setting, the relevant expansions are based on Wiener-Hermite polynomials, and the corresponding complex extension domains are complex strips. First results on the approximation theory of such operators have recently been obtained in \parencite{dung_analyticity_2023, marcati2026sparsity, marcati2026neural}.

\section*{Acknowledgements and AI disclosure}
LDH, SW and JZ gratefully acknowledge support from the German Research Foundation (DFG) within the Priority Programme SPP 2298, \textit{Theoretical Foundations of Deep Learning} (project number 543965776). LDH additionally acknowledges support by the German Research Foundation (DFG) under Germany's Excellence Strategy – The Berlin Mathematics Research Center MATH+ (EXC-2046/2, project ID: 390685689), as well as support by a doctoral scholarship from the German Academic Scholarship Foundation (Studienstiftung des deutschen Volkes).

During the preparation of this manuscript, ChatGPT 5.6 was used for proofreading. The authors independently verified all final mathematical arguments and reported results and take full responsibility for
the manuscript.

\appendix
\section{Proofs from Section \ref{section: holomorphy class}}\label{appendix: proof section 2}

\subsection{Compactness}\label{appendix: compactness}
\begin{proposition}\label{proposition: compactness}
    The class $\mathcal H^{s,t}(L; \mathcal Y_{\C})$ is relatively compact in $L^\infty (\operatorname{supp}(\gamma), \mathcal Y)$ and, consequently, in $L^2(\gamma, \mathcal Y)$.
\end{proposition}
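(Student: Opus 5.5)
We prove relative compactness by transferring to the parameter domain: via $U G = G\circ\sigma_s$, the sup-norm over $\operatorname{supp}(\gamma)=\sigma_s(Q)$ equals the sup-norm of $F=G\circ\sigma_s$ over $Q$. So it suffices to show that $\mathcal F:=\{G\circ\sigma_s : G\in\mathcal H^{s,t}(L;\mathcal Y_\C)\}$ is relatively compact in $C(Q,\mathcal Y)$ with the uniform norm, where $Q$ carries the $\ell^1_{-s}$-topology (compact by Remark~\ref{rem: compactness}). The approach is the vector-valued Arzel\`a--Ascoli theorem: on a compact metric space, a family is relatively compact in $C(Q,\mathcal Y)$ iff it is equicontinuous and pointwise relatively compact. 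The $L^2(\gamma,\mathcal Y)$ statement then follows because $\gamma$ is a probability measure, so $\|\cdot\|_{L^2(\gamma,\mathcal Y)}\le\|\cdot\|_{\infty,\mathcal Y}$ and the identity map is continuous; it maps relatively compact sets to relatively compact sets.

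\emph{Pointwise relative compactness.} For $c\in Q$ we have $\|F(c)\|_{\mathcal Y^t}\le L$ by Lemma~\ref{lemma: (b,r)-holomorphy}\ref{item: uniform bound}. The $\mathcal Y^t$-ball of radius $L$ is relatively compact in $\mathcal Y$ since $t>0$. Indeed, in coefficients with respect to $\Phi$ this ball is contained in an ellipsoid with semi-axes $\lesssim L j^{-t}\to0$ by the Riesz bounds, and such an ellipsoid is totally bounded. This is the place where $t>0$ is used.

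\emph{Equicontinuity.} This is the main step. Let $r_*>0$ be as in Remark~\ref{rem: compactness}, fix $r\in(0,r_*)$ and write $U_r=\{z\in\ell^1_{-s}(\N)_\C:\operatorname{dist}(z,Q)<r\}\subset\sigma_s^{-1}(\mathcal D_\C)$. On $U_r$, the map $\widetilde F=G\circ\sigma_s$ is holomorphic in the Fr\'echet sense, because $\sigma_s$ is bounded linear by Lemma~\ref{lemma: weighted seq space} and $G$ is holomorphic on $\mathcal O_\C\supset\mathcal D_\C$. It is also bounded there by $L$ in $\mathcal Y^t_\C$, hence by $C_\Phi L$ in $\mathcal Y_\C$. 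Take $c,c'\in Q$ with $h=c'-c$ and $\|h\|_{\ell^1_{-s}}<r/2$. The one-variable function $\lambda\mapsto\widetilde F(c+\lambda h)$ is holomorphic on $\{|\lambda|<r/\|h\|_{\ell^1_{-s}}\}$, since all these points lie within distance $r$ of $c\in Q$, and it is bounded by $C_\Phi L$. The Cauchy estimate (Schwarz lemma applied to the vector-valued function minus its value at $0$) gives
\[
\|F(c')-F(c)\|_{\mathcal Y}\le 2C_\Phi L\,\frac{\|h\|_{\ell^1_{-s}}}{r}.
\]
Thus $\mathcal F$ is uniformly Lipschitz on $Q$ with constant $2C_\Phi L/r$, independent of $G$, which gives equicontinuity. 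A technical point to check here is that the paper bounds $G$ only on $\mathcal D_\C$. The Cauchy argument needs boundedness only on the disc $\{c+\lambda h\}\subset U_r\subset\sigma_s^{-1}(\mathcal D_\C)$, so this causes no problem.

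Finally, we combine the two properties via Arzel\`a--Ascoli to obtain relative compactness of $\mathcal F$ in $C(Q,\mathcal Y)$. Transporting back through $\sigma_s$, which is a homeomorphism of $Q$ onto $\sigma_s(Q)$ as a continuous bijection from a compact space, gives relative compactness in $L^\infty(\operatorname{supp}(\gamma),\mathcal Y)$. By the embedding noted in the first paragraph, the same holds in $L^2(\gamma,\mathcal Y)$.
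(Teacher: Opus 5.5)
Your proof is correct and follows the paper's argument. Both use the bound $\|\cdot\|_{L^2(\gamma,\mathcal Y)}\le\|\cdot\|_{\infty,\mathcal Y}$, Arzel\`a--Ascoli, pointwise relative compactness from the compact embedding $\mathcal Y^t\hookrightarrow\mathcal Y$, and equicontinuity from a Cauchy estimate on a one-dimensional complex disc; the only difference is cosmetic (you work on $Q\subset\ell^1_{-s}(\N)$ instead of $\sigma_s(Q)\subset\mathcal X_{\ell^1}$, which are isometric by Lemma~\ref{lemma: weighted seq space}), and your constant $2C_\Phi L/r$, with a radius uniform in the base point, is tracked more carefully than the paper's $L/r$.
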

\begin{proof}
    Since $\operatorname{supp}(\gamma) = \sigma_s(Q)$ and $\gamma$ is a probability measure, we can bound 
    \begin{equation*}
        \|G\|_{L^2(\gamma, \mathcal Y)} \leq \|G\|_{\infty, \mathcal Y}
    \end{equation*}
    for all $G\in L^\infty (\operatorname{supp}(\gamma), \mathcal Y)$. It thus suffices to prove relative compactness of $\mathcal H^{s,t}(L; \mathcal Y_{\C})$ in $C(\sigma_s(Q), \mathcal Y)$. By the Arzela-Ascoli Theorem \parencite[Theorem 47.1]{munkres_topology_2000}, we need to show that $\mathcal H^{s,t}(L;\mathcal Y_{\C})$ is equicontinuous and pointwise relatively compact, i.e.~for each $x\in \operatorname{supp}(\gamma)$, the $\mathcal Y$-closure of the set $\{G(x): G\in \mathcal H^{s,t}(L; \mathcal Y_{\C})\}$ is compact in $\mathcal Y$. 
    
    Note that by Definition \ref{def: holomorphic operator}, for any $x\in \sigma_s(Q)\subset \mathcal D_{\C}$, it holds that $\|G(x)\|_{\mathcal Y^t}\leq L$ and is thus contained in a ball of radius $L$ of $\mathcal Y^t$. Since the embedding $\mathcal Y^t\hookrightarrow \mathcal Y$ is compact, $\mathcal H^{s,t}(L; \mathcal Y_{\C})$ is pointwise relatively compact. To show that $\mathcal H^{s,t}(L; \mathcal Y_{\C})$ is equicontinuous, let $x_0 \in \sigma_s(Q)$ and $h\in \mathcal X_{\ell^1}$ and $r>0$ small enough such that $\|h\|_{\mathcal X_{\ell^1}}= 1$ and $B_r^{\mathcal X_{\ell^1}}(x_0)\subset \mathcal D_{\C}$. By Cauchy's integral formula, it holds that for any $\xi \in B_{r/2}^{\C}(0)$ and any $G\in \mathcal H^{s,t}(L; \mathcal Y_{\C})$, we have 
    \begin{align*}
        \|G(x_0 + \xi h) - G(x_0)\|_{\mathcal Y} &\leq  \frac{1}{2\pi}\int_{|\zeta|= r} \left\|G(x_0 + \zeta h)\right\|_{\mathcal Y}\left(\frac{1}{\zeta - \xi} - \frac{1}{\zeta}\right)\; d\zeta\\
        & \leq  \frac{1}{2\pi}2\pi r \sup_{\xi \in \partial B_r^{\C}(0)} \frac{\left\|G(x_0 + \zeta h)\right\|_{\mathcal Y^t}|\xi|}{|\zeta - \xi||\zeta|}\\
        & \leq \frac{L}{r}|\xi|= \frac{L}{r}\|\xi h\|_{\mathcal X_{\ell^1}},
    \end{align*}
    where we have bound the norm of the integral by the integral of the norm in the first line and used the embedding $\mathcal Y^t \to \mathcal Y$ in the second line.
    Since $L$ is independent of $G$ and $\xi$, the statement follows.
\end{proof}

\subsection{Proof of Lemma \ref{lemma: weighted seq space}}\label{appendix: proof of weighted seq space}

    For $c\in \ell^1_{-s}(\N)$, the upper Riesz bound implies that $\|\psi_j\|_{\mathcal X}\leq D_\Psi$ for every $j\in \N$. Therefore, 
    \begin{equation*}
        \left\|\sum_{j=1}^{\infty} c_j j^{-s} \psi_j\right\|_{\mathcal X} \leq  \sum_{j=1}^{\infty} |c_j| j^{-s}\|\psi_j\|_{\mathcal X} \leq D_\Psi \|c\|_{\ell^1_{-s}(\N)}.
    \end{equation*}
    and thus $\|\widetilde \sigma_s(c)\|_{\mathcal X} \leq D_\Psi \|c\|_{\ell^1_{-s}}$.
    By the biorthogonality of $\Psi$ and $ \widetilde \Psi$,
    \begin{equation*}
        \|\widetilde\sigma_s(c)\|_{\mathcal X_{\ell^1}} = \sum_{j=1}^{\infty} |\langle \widetilde\sigma_s(c), \widetilde \psi_j \rangle | =\sum_{j=1}^{\infty} |c_j| j^{-s} =  \|c\|_{\ell^1_{-s}(\N)}.
    \end{equation*}
    Conversely, if $x\in \mathcal X_{\ell^1}$, set $c_j = j^{s} \langle x, \widetilde \psi_j \rangle_{\mathcal X}$.
    Then 
    \begin{equation*}
        \|c\|_{\ell^1_{-s}} = \sum_{k=1}^{\infty}|\langle x,\widetilde\psi_k\rangle_{\mathcal X}|=\|x\|_{\mathcal X_{\ell^1}}<\infty,
    \end{equation*}
    and 
    \begin{equation*}
        \widetilde \sigma_s(c) = \sum_{k=1}^\infty \langle x, \widetilde \psi_k\rangle_{\mathcal X} \psi_k = x.
    \end{equation*}
    Thus, $\widetilde \sigma_s$ maps $\ell^1_{-s}$ bijectively onto $\mathcal X_{\ell^1}$. The complex case follows identically.

\subsection{Proof of Proposition \ref{proposition: extension to infdim ellipse}}\label{appendix: proof of extension}

Fix $s>1$ and $r>0$. Recall that a sequence $\boldsymbol{\varrho} = (\varrho_j )_{j\in \N}$ is called $r$-\textit{admissible}, if 
\begin{equation*}
    \sum_{j=1}^\infty (\varrho_j -1)j^{-s} < r
\end{equation*}
and that we define
\begin{equation*}
    S_{s,r} = \bigcup_{\boldsymbol{\varrho} \text{ is r-admissible}}\mathcal{E}_{\boldsymbol{\varrho}}^\infty.
\end{equation*}
For $z\in  \C$, define $\varrho^*(z)\coloneqq \inf\{\varrho>1: z \in \mathcal E_\varrho \}$ and set $\varrho^*(z) =1$ for $z\in [-1,1]$. Furthermore, set 
\begin{equation*}
    R^*: \ell^1_{-s}(\N) \to (0,\infty), \quad R^*(z) = \sum_{j=1}^\infty (\varrho^*(z_j)-1)j^{-s}.
\end{equation*}
\paragraph{$R^*$ is continuous.}
    The map $\varrho^*: \C \to [1, \infty)$ is continuous. Furthermore, for $z\not \in [-1,1]$, $\varrho^*(z) = |z + \sqrt{z^2-1}|>1$, which yields the bound $\varrho^*(z)-1\leq 2|z|$. Consequently, for every $z\in \ell^1_{-s}(\N)_{\C}$, it holds that 
    \begin{equation}\label{eq: weighted bound}
        R^*(z) \leq 2 \sum_{j=1}^\infty |z_j|j^{-s} = 2 \|z\|_{\ell^1_{-s}},
    \end{equation}
    and $R$ is well-defined.
    
    Let $\varepsilon>0$ and fix $z\in \ell^1_{-s}$. Choose $N\in \N$ large enough such that 
    \begin{equation*}
        \sum_{j>N} |z_j|j^{-s} < \varepsilon/16.
    \end{equation*}
    Set $M = 1+ \max_{j\in \{1, \dots, N\}}|z_j|$. Since $\varrho^*$ is continuous, it is uniformly continuous on the compact disk $\overline B_M^{\C}(0)$. Therefore, there exists a $\eta>0$, such that $|z' -  z"| < \eta$ implies that 
    \begin{equation*}
        |\varrho^*(z') - \varrho^*(z")| < \frac{\varepsilon}{2 \sum_{j=1}^N j^{-s}}
    \end{equation*}
    for all $z', z" \in \overline B_M^{\C}(0)$. Now choose $\delta = \min \{\varepsilon/8,\eta N^{-s}, N^{-s}\}$. If  $\|\widetilde z - z\|_{\ell^1_{-s}}< \delta$, then for all $j \in \{1, \dots, N\}$, it holds that 
    \begin{equation*}
        |z_j - \tilde z_j| \leq j^{s} \|z - \tilde z\|_{\ell^1_{-s}} \leq N^s \delta  \leq \min \{1,\eta\}.
    \end{equation*}
    By an application of the triangle inequality, it holds that 
    \begin{equation*}
        |R^*(z) - R^*(\widetilde z)| \leq \sum_{j=1}^N |\varrho^*(z_j) -\varrho^*(\widetilde z_j)|j^{-s} + \sum_{j>N} |\varrho^*(z_j) -\varrho^*(\widetilde z_j)|j^{-s}
    \end{equation*}
    To upper bound the first term, note that $\tilde z_j \in \overline B_M^{\C}(0)$  and therefore 
    \begin{equation*}
         \sum_{j=1}^N |\varrho^*(z_j) -\varrho^*(\widetilde z_j)|j^{-s}  < \frac{\varepsilon}{2}.
    \end{equation*}
    For the second term, note that
    \begin{align*}
    \sum_{j=N+1}^\infty |\varrho^*(z_j) -\varrho^*(\widetilde z_j)|j^{-s}   &\leq 2\sum_{j=N+1}^\infty (|z_j| + |\widetilde z_j|) j^{-s}  \\
        &\leq 4 \sum_{j=N+1}^\infty |z_j|j^{-s} + 2\|z-\widetilde z\|_{\ell_{-s}^1} \\
        &= \frac{\varepsilon}{2}
    \end{align*}
    The continuity of $R$ follows.
    
    \paragraph{Characterizing $S_{s,r}$ by $\{z\in \ell^1_{-s}(\N): R^*(z) < r\}$.} 
    If $z\in S_{s,r}$, there exists an $r$-admissible sequence $\boldsymbol{\varrho} = (\varrho_j)_{j\in \N}$, such that $z\in \mathcal{E}_{\boldsymbol{\varrho}}^\infty$. Then there exists a $(c_j)_{j\in \N}\in [-1,1]^{\N}$, such that $|z_j - c_j|< \varrho_j -1$ for all $j\in \N$ and
    \begin{align*}
        \sum_{j=1}^\infty |z_j| j^{-s} &\leq \sum_{j=1}^\infty |c_j|j^{-s} + \sum_{j=1}^\infty |z_j-c_j|j^{-s}\\
        &\leq \sum_{j=1}^\infty j^{-s} +\sum_{j=1}^\infty (\varrho_j -1)j^{-s} \\
        &<\infty.
    \end{align*}
This shows that $S_{s,r}\subset \ell^1_{-s}(\N)_{\C}$. Since the Bernstein-ellipses are closed, $z_j\in \mathcal E_{\varrho_j}$ if and only if $\varrho^*(z_j) \leq \varrho_j$. Therefore, 
\begin{equation*}
    R^*(z) \leq \sum_{j=1}^\infty (\varrho_j-1)j^{-s}<r.
\end{equation*}
Conversely, let $z\in \ell^1_{-s}(\N)_{\C}$ and suppose that $R^*(z)< r$. We can choose a sequence $(\varepsilon_j)_{j\in \N} \in (0,\infty)^{\N}$, such that 
\begin{equation*}
    \sum_{j=1}^\infty \varepsilon_j j^{-s} < r - R^*(z)
\end{equation*}
and set $\varrho_j \coloneqq \varrho_*(z_j) + \varepsilon_j$.
Then $z_j \in \mathcal E_{\varrho_j}$ for all $j\in \N$, and 
\begin{equation*}
    \sum_{j=1}^\infty (\varrho_j - 1)j^{-s} = R^*(z) + \sum_{j=1}^\infty \varepsilon_j j^{-s}<r
\end{equation*}
which implies that $z\in S_{s,r}$.
    
\paragraph{$S_{s,r}$ is open and bounded in $\ell^1_{-s}(\N)_{\C}$}
Since $R^*$ is continuous and $S_{s,r} = R^{-1}((0,r))$, it must be open as the pre-image of an open set. If $z\in S_{s,r}$, there exists an admissible sequence $\boldsymbol{\varrho}$ such that $z\in \mathcal E_{\boldsymbol{\varrho}}^\infty$ and hence
\begin{equation*}
    \|z\|_{\ell^1_{-s}} \leq \sum_{j=1}^\infty \rho_j j^{-s}
     \sum_{j=1}^\infty j^{-s} + r
\end{equation*}
which is finite, proving that $S_{s,r}$ is bounded.

\begin{lemma}
    Let $Q$, $\mathcal D_{\C}$ and $\mathcal O_{\C}$ as in Definition \ref{def: holomorphic operator}, such that 
    \begin{equation*}
        \sigma_s(Q)\subset \mathcal D_{\C}\subset \mathcal O_{\C}.
    \end{equation*}
    Then there exist $0<r<r'$, such that $Q \subset S_{s,r}\subset \sigma_{s}^{-1}(\mathcal D_{\C})\subset S_{s,r'}$.
\end{lemma}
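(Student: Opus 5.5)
The inclusion chain will be read off from the characterization just established, $S_{s,r}=\{z\in\ell^1_{-s}(\N)_{\C}: R^*(z)<r\}$, together with the continuity of $R^*$ and the bound \eqref{eq: weighted bound}. Write $U:=\sigma_s^{-1}(\mathcal D_{\C})$. By Remark \ref{rem: compactness}, $U$ is open and bounded in $\ell^1_{-s}(\N)_{\C}$, and $Q\subset U$. I split the argument into the left inclusions and the right inclusion.

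\emph{The left inclusions $Q\subset S_{s,r}\subset U$.} For every $c\in Q$ we have $c_j\in[-1,1]$, hence $\varrho^*(c_j)=1$ and $R^*(c)=0<r$. So $Q\subset S_{s,r}$ for every $r>0$.

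For the second inclusion I use $r_*=\operatorname{dist}(Q,\partial U)>0$ from Remark \ref{rem: compactness}; this relies on $Q$ being compact in $\ell^1_{-s}$. Take $z\in S_{s,r}$ and choose an $r$-admissible $\boldsymbol\varrho$ with $z\in\mathcal E^\infty_{\boldsymbol\varrho}$. By \parencite[Lemma B.2.4]{zech_sparse-grid_2018} there is $c\in Q$ with $|z_j-c_j|\le\varrho_j-1$ for all $j$. This gives
\begin{equation*}
\operatorname{dist}(z,Q)\le \|z-c\|_{\ell^1_{-s}}\le\sum_j(\varrho_j-1)j^{-s}<r.
\end{equation*}
Remark \ref{rem: compactness} states that the $r$-neighbourhood of $Q$ lies in $U$ whenever $r<r_*$. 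Hence any $r\in(0,r_*)$ gives $S_{s,r}\subset U$.

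\emph{The right inclusion $U\subset S_{s,r'}$.} Since $U$ is bounded, $B:=\sup_{z\in U}\|z\|_{\ell^1_{-s}}<\infty$. By \eqref{eq: weighted bound}, every $z\in U$ satisfies $R^*(z)\le 2\|z\|_{\ell^1_{-s}}\le 2B$. Choose $r'>\max\{2B,r\}$. Then $R^*(z)<r'$ on $U$, and the characterization gives $U\subset S_{s,r'}$; by construction $r<r'$.

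\emph{Main obstacle.} The delicate step is the use of $r_*>0$. For this, $U$ must be genuinely open in $\ell^1_{-s}$; this follows from the continuity of $\sigma_s$ in Lemma \ref{lemma: weighted seq space} together with $\mathcal D_{\C}$ being open in $(\mathcal X_{\ell^1})_{\C}$. One also needs that a compact set inside an open set has positive distance to the complement. Positivity holds because $z\mapsto\operatorname{dist}(z,U^c)$ is continuous and positive on $Q$, so it attains a positive minimum on the compact set $Q$. I will state this directly, rather than rely on $\partial U$, to avoid any subtlety in the case $U^c=\emptyset$. The rest consists of the direct verifications above.
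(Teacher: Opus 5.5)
Your proposal is correct and follows the paper's proof essentially step by step. You get $Q\subset S_{s,r}$ from $R^*=0$ on $Q$, and $S_{s,r}\subset\sigma_s^{-1}(\mathcal D_{\C})$ for $r<r_*$ via $\operatorname{dist}(z,Q)<r$. You then get $\sigma_s^{-1}(\mathcal D_{\C})\subset S_{s,r'}$ from boundedness together with $R^*\le 2\|\cdot\|_{\ell^1_{-s}}$ and the sublevel-set characterization of $S_{s,r'}$. Your two refinements are sound but do not change the route: you use $\operatorname{dist}(\cdot,U^c)$ instead of $\partial U$, and you choose $r'>\max\{2B,r\}$ so that $r<r'$ holds explicitly, whereas the paper's choice $r'=2M+1$ guarantees this only implicitly.
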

\begin{proof}
    By Lemma~\ref{lemma: weighted seq space}, $\sigma_s: \ell^1_{-s}(\N)_{\C}\to \mathcal X_{\ell^1, \C}$ is continuous, and thus $\sigma_{s}^{-1}(\mathcal D_{\C})$ is open in $\ell^1_{-s}(\N)_{\C}$. Moreover, $Q\subset \sigma_{s}^{-1}(\mathcal D_{\C})$ and $Q$ is compact in $\ell^1_{-s}(\N)_{\C}$. Therefore, 
    \begin{equation*}
        r^* \coloneqq \operatorname{dist}(Q, \partial \sigma_{s}^{-1}(\mathcal D_{\C})) = \inf\{\|x-y\|: x\in Q, y\in \partial\sigma_{s}^{-1}(\mathcal D_{\C})\}>0.
    \end{equation*}
    Choose $r\in (0, r^*)$. For $z\in S_{s,r}$, it holds that 
    \begin{equation*}
        \operatorname{dist}(z, Q) \leq R^*(z) < r
    \end{equation*}
    which proves that $z\in \sigma_{s}^{-1}(\mathcal D_{\C})$. By definition, it also holds that $Q \subset S_{s,r}$. For the last inclusion, note that $\sigma_s$ is an isometric isomorphism between $\ell^1_{-s}$ and $\mathcal X_{\ell^1}$. Since $\mathcal D_{\C}$ is bounded in $\mathcal X_{\ell^1, \C}$, so must $\sigma_{s}^{-1}(\mathcal D_{\C})$ be in $\ell^1_{-s}(\N, \C)$, i.e. there exists a $M>0$, such that 
    \begin{equation*}
        \|z\|_{\ell^1_{-s}}\leq M, \qquad \text{for all } z\in \sigma_{s}^{-1}(\mathcal D_{\C}).
    \end{equation*}
    By \eqref{eq: weighted bound}, it holds that 
    \begin{equation*}
        R^*(z)\leq 2 \|z\|_{\ell^1_{-s}} \leq 2M.
    \end{equation*}
    Choosing $r' = 2M +1$ yields that $R^*(z)< r'$ and implies that $\sigma_{s}^{-1}(\mathcal D_{\C}) \subset S_{s,r'}$, which proves the claim.
\end{proof}

\section{Proofs from Section \ref{section: metric entropy}}\label{appendix: proof section 3}
\subsection{Proof of Proposition \ref{proposition: L2 metric entropy UB}}\label{appendix: proof of L2 metric entropy UB}
The following lemma is used to optimize the upper bound for the $L^2$ metric entropy of $\mathcal H^{s,t}(L;\C)$ in Proposition \ref{proposition: L2 metric entropy UB}.
\begin{lemma}\label{lemma: constr opt prob L2}
    Let $\alpha>1$, $\beta>0$, $N\in \N$ and $\varepsilon>0$. Consider the constrained optimization problem 
    \begin{align}\label{eq: constr opt L2}
        \min_{\varepsilon_1, \dots, \varepsilon_N>0} \sum_{i=1}^N \left(\frac{i^{-\alpha}}{\varepsilon_i}\right)^{1/\beta}, \quad \text{subject to } \sum_{i=1}^N \varepsilon_i^2 \leq \frac{\varepsilon^2}{2}\; 
    \end{align}
    The unique minimizer is
    \begin{align}\label{eq: eps_i sol}
        \varepsilon_i = \frac{\varepsilon i^{-\frac{\alpha}{1+2\beta}}}{\sqrt{2}(\sum_{i=1}^N i^{-\frac{2\alpha}{1+2\beta}})^{1/2}}, \qquad i = 1, \dots, N.
    \end{align}
    at which the objective equals 
    \begin{align}\label{eq: constr opt minimum L2}
        \left(\frac{\sqrt{2}}{\varepsilon}\right)^{1/\beta}\left(\sum_{i=1}^N i^{-\frac{2\alpha}{1+2\beta}}\right)^{\frac{1+2\beta}{2\beta}}.
    \end{align}
    Particularly, if $N \asymp \varepsilon^{-1/(\alpha-1/2)}$ and $\kappa\coloneqq\min\{\alpha-\frac{1}{2},\beta\}$, then for every $\tau>0$ and all sufficiently small $\varepsilon>0$, 
    \begin{align}\label{eq: metric entropy upper bound L2}
        \sum_{i=1}^N \left(\frac{i^{-\alpha}}{\varepsilon_i}\right)^{1/\beta} \lesssim \varepsilon^{-\frac{1}{\kappa} - \tau}.
    \end{align}
\end{lemma}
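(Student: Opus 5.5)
The plan is to treat \eqref{eq: constr opt L2} as a convex program and solve it with Lagrange multipliers, then estimate the resulting sum by integral comparison in three regimes.

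\textbf{Step 1: the minimizer.} Write $\beta' := 1/\beta$. The objective $f(\varepsilon_1,\dots,\varepsilon_N)=\sum_i i^{-\alpha\beta'}\varepsilon_i^{-\beta'}$ is strictly convex on $(0,\infty)^N$ and strictly decreasing in each coordinate. Hence the constraint is active at any minimizer, and a minimizer exists: $f\to\infty$ as any $\varepsilon_i\to 0$, and the feasible set is bounded. Stationarity of $f+\lambda(\sum_i\varepsilon_i^2-\varepsilon^2/2)$ gives
\[
\beta' i^{-\alpha\beta'}\varepsilon_i^{-\beta'-1}=2\lambda\varepsilon_i,
\qquad\text{so}\qquad
\varepsilon_i^{\beta'+2}\propto i^{-\alpha\beta'}.
\]
Since $\alpha\beta'/(\beta'+2)=\alpha/(1+2\beta)$, this means $\varepsilon_i = c\, i^{-\alpha/(1+2\beta)}$. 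The active constraint then fixes $c=\varepsilon/(\sqrt2\, S_N^{1/2})$ with $S_N=\sum_i i^{-2\alpha/(1+2\beta)}$, which is \eqref{eq: eps_i sol}. Uniqueness follows from strict convexity of $f$ on the convex feasible set.

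\textbf{Step 2: the minimal value.} Substitute back into the objective. Each term becomes $i^{-\alpha/\beta}\, i^{\alpha/(\beta(1+2\beta))}\,(\sqrt2 S_N^{1/2}/\varepsilon)^{1/\beta}$. The exponent of $i$ equals $-\frac{\alpha}{\beta}\cdot\frac{2\beta}{1+2\beta}=-\frac{2\alpha}{1+2\beta}$. Summing over $i$ gives $(\sqrt2/\varepsilon)^{1/\beta}S_N^{1+1/(2\beta)}$, which is \eqref{eq: constr opt minimum L2}.

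\textbf{Step 3: the rate.} Put $N\asymp\varepsilon^{-1/(\alpha-1/2)}$ and split into the three cases for $S_N$ already recorded in the proof of Proposition \ref{proposition: L2 metric entropy UB}.

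\emph{Case $2\alpha>1+2\beta$.} Here $S_N\lesssim1$, so the value is $\lesssim\varepsilon^{-1/\beta}$. Since $\beta<\alpha-1/2$, we have $\kappa=\beta$.

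\emph{Case $2\alpha=1+2\beta$.} Here $\kappa=\beta=\alpha-1/2$, and we get $\varepsilon^{-1/\beta}\log(1/\varepsilon)^{(1+2\beta)/(2\beta)}$. The logarithmic factor is absorbed into $\varepsilon^{-\tau}$.

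\emph{Case $2\alpha<1+2\beta$.} Here $\kappa=\alpha-1/2$ and $S_N\lesssim N^{(1+2\beta-2\alpha)/(1+2\beta)}$. Raising to the power $(1+2\beta)/(2\beta)$ gives $N^{(1+2\beta-2\alpha)/(2\beta)}$. Inserting $N$, the exponent of $\varepsilon^{-1}$ is
\[
\frac1\beta+\frac{1+2\beta-2\alpha}{2\beta(\alpha-1/2)}=\frac{2\alpha-1+1+2\beta-2\alpha}{2\beta(\alpha-1/2)}=\frac{1}{\alpha-1/2}.
\]

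So in every case the bound is $\varepsilon^{-1/\kappa-\tau}$. The only delicate point is this last exponent bookkeeping, including the boundary case; the optimization itself is routine.
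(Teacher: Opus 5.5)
Your proposal is correct and follows the paper's proof essentially step by step. Strict convexity and monotonicity make the constraint active, Lagrange multipliers give $\varepsilon_i \propto i^{-\alpha/(1+2\beta)}$, and the three-case integral comparison for $S_N$ with $N\asymp\varepsilon^{-1/(\alpha-1/2)}$ gives the rate; in the boundary case $2\alpha = 1+2\beta$ you correctly keep the logarithm raised to the power $(1+2\beta)/(2\beta)$ before absorbing it into $\varepsilon^{-\tau}$, which is slightly more careful than the paper's display there.
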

\begin{proof}
    The objective is strictly convex for $\varepsilon_1, \dots, \varepsilon_N>0$ and strictly decreasing in each $\varepsilon_i$. Hence, at the minimizer, $\sum_{i=1}^N \varepsilon_i^2 = \varepsilon^2/2$ and the unique minimizer is characterized by the Lagrange multipliers. Consider
    \begin{equation*}
        f(\varepsilon_1, \dots, \varepsilon_N, \lambda) = \sum_{i=1}^N \left(\frac{i^{-\alpha}}{\varepsilon_i}\right)^{1/\beta} + \lambda\left(\sum_{i=1}^N \varepsilon_i^2 - \frac{\varepsilon^2}{2}\right).
    \end{equation*}
    For each $i = 1, \dots, N$, solving $\frac{\partial f}{\partial \varepsilon_i}=0$ is equivalent to  $-\frac{1}{\beta}i^{-\frac{\alpha}{\beta}} \varepsilon_i^{-\frac{1}{\beta}-1} + 2\lambda \varepsilon_i=0$.
    Solving for $\varepsilon_i$ yields the choice
    \begin{align}\label{eq: eps_i}
        \varepsilon_i = C_\beta i^{-\frac{\alpha}{1+2\beta}}, \qquad C_\beta\coloneqq \left(\frac{1}{2\lambda \beta}\right)^{\frac{\beta}{1+2\beta}}.
    \end{align}
    Now set $S_N \coloneqq \sum_{i=1}^N i^{-\frac{2\alpha}{1+2\beta}}$. Substituting \eqref{eq: eps_i} in the constraint $\sum_{i=1}^N \varepsilon_i^2 = \varepsilon^2/2$, yields $C_\beta^2 S_N = \varepsilon^2/2$ and thus $ C_\beta = \varepsilon/\sqrt{2S_N}$.
    Substituting $C_\beta$ back into \eqref{eq: eps_i} proves \eqref{eq: eps_i sol}. 
    The objective at the minimizer is thus given by 
    \begin{align*}
        f(\varepsilon_1, \dots, \varepsilon_N, \lambda)
        &=
        C_\beta^{-1/\beta}
        \sum_{i=1}^N
        i^{-\frac{2\alpha}{1+2\beta}}\\
        &=
        \left(\frac{\sqrt{2}}{\varepsilon}\right)^{1/\beta}
        S_N^{\frac{1+2\beta}{2\beta}},
    \end{align*}
    which proves \eqref{eq: constr opt minimum L2}.
    It remains to estimate $S_N$ in terms of $\varepsilon$. By integral comparison, 
    \begin{align*}
        S_N \lesssim \begin{cases}
            N^{1 - \frac{2\alpha}{1+2\beta}}     &2\alpha <1+2\beta\\
            \log(N)                     &2\alpha = 1+2\beta\\
            1                           &2\alpha > 1+ 2\beta
        \end{cases}
    \end{align*}
    Assume now that $N \sim \varepsilon^{-1/(\alpha-1/2)}$. If $\alpha - 1/2 > \beta$, then $S_N \lesssim 1$ and we get the upper bound $\varepsilon^{-1/\beta}$. If $\alpha - 1/2 < \beta$, then we get the upper bound 
    \begin{equation*}
        \varepsilon^{-\frac{1}{\beta}}N^{\frac{1+ 2\beta - 2\alpha}{2\beta}} \asymp \varepsilon^{-\frac{1}{\alpha-1/2}}.
    \end{equation*}
    Lastly, if $\alpha -1/2 = \beta$, then we get the upper bound 
    \begin{equation*}
        \varepsilon^{-\frac{1}{\beta}}(\log(N))^{\frac{1+ 2\beta - 2\alpha}{2\beta}} \asymp \varepsilon^{-\frac{1}{\alpha-1/2}}.
    \end{equation*}
    Since $N$ is polynomial in $\varepsilon^{-1}$, for every $\tau>0$, we can bound the logarithmic factor by $\varepsilon^{-\tau}$ for all sufficiently small $\varepsilon>0$. Combining the three cases yields \eqref{eq: metric entropy upper bound L2}.
\end{proof}

\subsection{Proof of Proposition \ref{proposition: Linfty metric entropy UB}}\label{appendix: proof of Linfty metric entropy UB}
Similar to Appendix \ref{appendix: proof of L2 metric entropy UB}, the following Lemma is used to optimize the upper bound for the $L^\infty$-metric entropy upper bounds for $\mathcal H^{s,t}(L;\mathcal Y_{\C})$.
\begin{lemma}\label{lemma: constr opt prob Linfty}
    Let $\alpha>1, \beta>0$, $N\in \N$, and $\varepsilon>0$. Consider the constrained optimization problem 
    \begin{align}\label{eq: constr opt Linfty}
        \min_{\varepsilon_1, \dots, \varepsilon_N} \sum \left(\frac{i^{-\alpha}}{\varepsilon_i}\right)^{1/\beta}, \quad \text{subject to } \sum_{i=1}^N \varepsilon_i \leq \frac{\varepsilon}{2}.
    \end{align}
    Its unique minimizer is
    \begin{align}\label{eq: eps_i sol Linfty}
        \varepsilon_i = \frac{\varepsilon i^{-\frac{\alpha}{1+\beta}}}{2(\sum_{i=1}^N i^{-\frac{\alpha}{1+\beta}})}.
    \end{align}
    At this minimizer, the objective equals 
    \begin{align}\label{eq: constr opt minimum Linfty} 
        \left(\frac{2}{\varepsilon}\right)^{1/\beta} \left( \sum_{i=1}^N i^{-\frac{\alpha}{1+\beta}} \right)^{\frac{1+\beta}{\beta}}. 
    \end{align}
    Particularly, if $N\asymp \varepsilon^{-1/(\alpha-1)}$ and set $\kappa_\infty = \min\{\alpha-1, \beta\}$. Then for every $\tau>0$, and all sufficiently small $\varepsilon>0$,
    \begin{align}\label{eq: metric entropy upper bound Linfty}
        \sum_{i=1}^N \left(\frac{i^{-\alpha}}{\varepsilon_i}\right)^{1/\beta} \lesssim \varepsilon^{-\frac{1}{\kappa_\infty} - \tau} 
    \end{align}
\end{lemma}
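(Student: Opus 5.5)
The approach mirrors Lemma \ref{lemma: constr opt prob L2}, with the quadratic constraint replaced by a linear one. On the open orthant $\varepsilon_1,\dots,\varepsilon_N>0$ the objective $\sum_i i^{-\alpha/\beta}\varepsilon_i^{-1/\beta}$ has two useful features. It is a sum of strictly convex functions of the separate variables, so it is strictly convex. It is also strictly decreasing in each $\varepsilon_i$. The feasible set $\{\sum_i\varepsilon_i\le\varepsilon/2\}$ is convex, so any minimizer is unique. By monotonicity the constraint must be active, $\sum_i\varepsilon_i=\varepsilon/2$: otherwise some $\varepsilon_i$ could be enlarged and the objective would drop. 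Existence follows because the objective blows up as any $\varepsilon_i\to0$, and the active constraint set is bounded. So the minimizer is characterized by the Lagrange conditions.

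Next I will solve the Lagrange system explicitly. Setting the partial derivatives of $f=\sum_i (i^{-\alpha}/\varepsilon_i)^{1/\beta}+\lambda(\sum_i\varepsilon_i-\varepsilon/2)$ to zero gives
\[
\tfrac1\beta i^{-\alpha/\beta}\varepsilon_i^{-1/\beta-1}=\lambda .
\]
Hence $\varepsilon_i=C\,i^{-\alpha/(1+\beta)}$ with a constant $C$ depending only on $\lambda$ and $\beta$. Write $T_N:=\sum_{i=1}^N i^{-\alpha/(1+\beta)}$. The active constraint gives $C T_N=\varepsilon/2$, which is \eqref{eq: eps_i sol Linfty}. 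Substituting back, each term equals $C^{-1/\beta}\,i^{-\alpha/\beta+\alpha/(\beta(1+\beta))}$. The exponent simplifies to $-\alpha/(1+\beta)$. The objective is therefore $C^{-1/\beta}T_N=(2T_N/\varepsilon)^{1/\beta}T_N$, which is \eqref{eq: constr opt minimum Linfty}.

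For the asymptotic bound \eqref{eq: metric entropy upper bound Linfty}, I will estimate $T_N$ by integral comparison, splitting into three cases.
\begin{itemize}
\item If $\alpha<1+\beta$, then $T_N\lesssim N^{1-\alpha/(1+\beta)}$.
\item If $\alpha=1+\beta$, then $T_N\lesssim\log N$.
\item If $\alpha>1+\beta$, then $T_N\lesssim 1$.
\end{itemize}
Note that $\alpha-1<\beta$ is exactly the first case, so $\kappa_\infty=\alpha-1$ there. With $N\asymp\varepsilon^{-1/(\alpha-1)}$, the first case gives
\[
\varepsilon^{-1/\beta}\,N^{\frac{1+\beta-\alpha}{\beta}}\asymp\varepsilon^{-\frac1\beta-\frac{1+\beta-\alpha}{\beta(\alpha-1)}}=\varepsilon^{-\frac1{\alpha-1}} .
\]
In the third case $\kappa_\infty=\beta$ and the bound is $\varepsilon^{-1/\beta}$. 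In the borderline case the bound is $\varepsilon^{-1/\beta}(\log N)^{(1+\beta)/\beta}$. Since $N$ is polynomial in $\varepsilon^{-1}$, the logarithmic factor is absorbed into $\varepsilon^{-\tau}$ for small $\varepsilon$.

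The only step needing care is the exponent bookkeeping in the first case, since it is the one that produces $\kappa_\infty$. Everything else is a direct transcription of the $L^2$ lemma.
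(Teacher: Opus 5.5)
Your proposal is correct and follows the paper's proof essentially step for step. Both arguments use strict convexity and monotonicity to force the constraint to be active, the Lagrange condition giving $\varepsilon_i = C\,i^{-\alpha/(1+\beta)}$ with $C=\varepsilon/(2T_N)$, and the same three-case integral comparison for $T_N$ under $N\asymp\varepsilon^{-1/(\alpha-1)}$. Your explicit existence argument, via blow-up as some $\varepsilon_i\to 0$ and boundedness of the active constraint set, is a small addition that the paper leaves implicit.
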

\begin{proof}
    The proof is analogous to the proof of Lemma \ref{lemma: constr opt prob L2}, with the same objective function but a different constraint $\sum_{i=1}^N \varepsilon_i \leq \varepsilon/2$. As in the proof of Lemma \ref{lemma: constr opt prob L2}, the objective is strictly convex and strictly decreasing in each $\varepsilon_i>0$. Hence the constraint is an equality at the unique minimizer. The corresponding Lagrangian is
    \begin{equation*}
        f(\varepsilon_1, \dots, \varepsilon_N, \delta) = \sum_{i=1}^N \left(\frac{i^{-\alpha}}{\varepsilon_i}\right)^{1/\beta} + \lambda\left(\sum_{i=1}^N \varepsilon_i - \frac{\varepsilon}{2}\right),
    \end{equation*}
    and solving for the stationary points is equivalent to $-\frac{1}{\beta}i^{-\frac{\alpha}{\beta}} \varepsilon_i^{-\frac{1}{\beta}-1} + \lambda =0$.
    The solution is
    \begin{align}\label{eq: eps_i Linfty}
        \varepsilon_i = C_\beta i^{-\frac{\alpha}{1+\beta}}, \qquad C_\beta\coloneqq \left(\frac{1}{\lambda \beta}\right)^{\frac{\beta}{1+\beta}}
    \end{align}
    Now set $S_N \coloneqq \sum_{i=1}^N i ^{-\frac{\alpha}{1+ \beta}}$.
    Substituting \eqref{eq: eps_i Linfty} into the constraint $\sum_{i=1}^N \varepsilon_i = \varepsilon/2$ yields $C_\beta = \frac{\varepsilon}{2S_N}$.
    Substituting this back into \eqref{eq: eps_i Linfty} proves \eqref{eq: eps_i sol Linfty}. The corresponding objective is 
    then given by 
    \begin{equation*}
        f(\varepsilon_1, \dots, \varepsilon_N, \lambda) = \left(\frac{2}{\varepsilon}\right)^{\frac{1}{\beta}} S_N^{\frac{1+ \beta}{\beta}}
    \end{equation*}
    from which \eqref{eq: constr opt minimum Linfty} follows.
    
    Lastly, we estimate $S_N$ in terms of $\varepsilon$. As in the proof of Lemma \ref{lemma: constr opt prob L2}, an integral comparison bound yields 
    \begin{align*}
        S_N \lesssim  \begin{cases}
            N^{1 - \frac{\alpha}{1+\beta}}     &\alpha -1<\beta\\
            \log(N)                     &\alpha = 1+\beta\\
            1                           &\alpha -1 >  \beta
        \end{cases}
    \end{align*}
    Assuming $N \sim \varepsilon^{-1/(\alpha-1)}$ and arguing similarly as in the corresponding cases of Lemma \ref{lemma: constr opt prob L2}, we obtain
    \begin{align*}
        \left( \frac{2}{\varepsilon}\right)^{1/\beta} S_N^{\frac{1+ \beta}{\beta}} \lesssim \begin{cases}
            \varepsilon^{-\frac{1}{\alpha -1}} & \alpha - 1 < \beta \\
            \varepsilon^{-\frac{1}{\beta} - \tau} & \alpha - 1 = \beta\\
            \varepsilon^{-\frac{1}{\beta}} & \alpha - 1 > \beta
        \end{cases}
    \end{align*}
    for any $\tau>0$ and sufficiently small $\varepsilon>0$. The claimed bound \eqref{eq: metric entropy upper bound Linfty} follows.
\end{proof}

\begin{proof}[Proof of Proposition \ref{proposition: Linfty metric entropy UB}]
    We first consider the class of holomorphic functionals $\mathcal H^s(L; \C)$. Fix $\tau>0$, let $\delta>0$ be sufficiently small such that $s' = s- \delta >1$ and such that $1/(s'-1) \leq 1/(s-1) + \tau$. Let $g\in \mathcal H^{s}(L; \C)$ and $g^\infty_\nu$ be the corresponding Legendre coefficients in the expansion w.r.t. $(P_\nu)_{\nu \in \mathcal F}$.  As in the proof of Proposition \ref{proposition: L2 metric entropy UB}, by Theorem \ref{theorem: coefficient bounds} there exists an enumeration $( \nu_{i})_{i\in \N}$ and a constant $C>0$, both independent of $g$, such that 
    \begin{equation*}
        |g^\infty_{\nu_i}| \leq C i^{-s'}.
    \end{equation*}
    By Theorem \ref{theorem: realvalued holomorphic approximation}, 
    \begin{equation*}
        \left\| g-\sum_{i=1}^N g^\infty_{\nu_i} P_{\nu_i}\circ\sigma_s^{-1} \right\|_{\infty} \lesssim N^{-(s'-1)}.
    \end{equation*}
    We can therefore choose $N \asymp \varepsilon^{-1/(s'-1)}$, such that the error is at most $\varepsilon/2$. It remains to discretize the coefficients $g^\infty_{\nu_i}$ for $i = 1, \dots, N$. Covering $[-Ci^{-s'}, Ci^{-s'}]$ by intervals of length $\varepsilon_i$ again yields the covering numbers
    \begin{equation*}
        N_i \coloneqq N(\varepsilon_i, [-Ci^{-s'}, Ci^{-s'}], |\cdot|) \lesssim \left(\frac{i^{-s'}}{\varepsilon_i}\right).
    \end{equation*}
    Let $\mathcal C_i = \{g^\infty_{\nu_i, j_i}: j_i = 1, \dots, N_i\}$ denote the minimal $\varepsilon_i$-covering of $[-Ci^{-s'}, Ci^{-s'}]$. Assume that $\sum_{i=1}^N \varepsilon_i \leq \varepsilon$.    
    Then for every $i = 1, \dots, N$, there exists a $k_i \in \{1, \dots, N_i\}$, such that $|g^\infty_{\nu_i} - g^\infty_{\nu_i, k_i}| \leq \varepsilon_i$ and
    \begin{equation*}
        \left\|g - \sum_{i=1}^Ng^\infty_{\nu_i, k_i} P_{\nu_i} \right\|_\infty \leq \frac{\varepsilon}{2} + \sum_{i=1}^N |g^\infty_{\nu_i} - g^\infty_{\nu_i, k_i}| \leq \varepsilon.
    \end{equation*}
    
    A short calculation shows that $\varepsilon_i = \varepsilon/2N$ is an optimal choice to optimize the metric entropy upper bound 
    \begin{align*}
        \log N\!\left( \varepsilon,\mathcal H^s(L;\C),\|\cdot\|_\infty \right) &\lesssim \sum_{i=1}^N \log\!\left( 1+C\frac{i^{-s'}N}{\varepsilon} \right) \\ 
        &\lesssim N \\
        &\lesssim \varepsilon^{-1/(s'-1)} \\
        &\leq \varepsilon^{-1/(s-1)-\tau},
    \end{align*}
    where we followed almost the same calculations as in \eqref{eq: sterling}.\\
    
    Let us now consider $\mathcal H^{s,t}(L; \mathcal Y_{\C})$. Let $\tau>0$, $\delta>0$ and $s' = s- \delta$ and $N$ be chosen as in the first part of the proof. Additionally, let $t' = t- \delta$. Fix $G\in \mathcal H^{s,t}(L; \mathcal Y_{\C})$. Again, by Theorem \ref{theorem: coefficient bounds}, there exists an enumeration $(\nu_i)_{i\in \N}$ and a constant $C>0$, such that the $\mathcal Y$-valued Legendre coefficients $g^\infty_{\nu_{i}}$ satisfy
    \begin{equation*}
        \|g^\infty_{\nu_i}\|_{\mathcal Y^{t'}} \leq Ci^{-s'}.
    \end{equation*}
    As in the proof of Proposition \ref{proposition: L2 metric entropy UB}, we know that $B_{s',t', i} = \{y\in \mathcal Y: \|y\|_{\mathcal Y^{t'}} \leq Ci^{-s'}\}$ has the $\varepsilon_i$-metric entropy 
    \begin{equation*}
        \log(N_i) \coloneqq \log N(\varepsilon_i, B_{s',t',i}, \|\cdot\|_{\mathcal Y}) \asymp \left(\frac{i^{-s'}}{\varepsilon_i}\right)^{1/t'}
    \end{equation*}
    As in the scalar-valued case, we assume that $\sum_{i=1}^N \varepsilon_i \leq \varepsilon/2$. If $\mathcal C_i = \{g^\infty_{\nu_i, j_i}: j_i = 1, \dots, N_i\}$ denotes the minimal $\varepsilon_i$-covering of $B_{s',t', i}$, then for all $i = 1, \dots, N$, there exists a $k_i \in \{1, \dots, N_i\}$, such that $\|g^\infty_{\nu_i} - g^\infty_{\nu_i, k_i}\|_{\mathcal Y} \leq \varepsilon_i$ and thus
    \begin{equation*}
        \left\|G-\sum_{i=1}^Ng_{\nu_i, k_i} P_{\nu_i}\circ\sigma_s^{-1}\right\|_{\infty, \mathcal Y} = \sup_{x\in \operatorname{supp}(\gamma)}\left\|G(x) - \sum_{i=1}^Ng_{\nu_i} P_{\nu_i}(\sigma_s^{-1}(x))\right\|_{\mathcal Y}
        \leq \frac{\varepsilon}{2} + \sum_{i=1}^N\|g^\infty_{\nu_i} - g^\infty_{\nu_i, k_i}\|_{\mathcal Y} \leq \varepsilon,
    \end{equation*}
    where we used that $\|P_\nu\|_\infty \leq 1$ and the triangle inequality.
    
    The constrained optimization problem
    \begin{equation*}
        \min_{\varepsilon_1, \dots, \varepsilon_N} \sum_{i=1}^N \log(N_i), \quad \text{subject to } \sum_{i=1}^N \varepsilon_i \leq \frac{\varepsilon}{2},
    \end{equation*}
    is solved in Lemma \ref{lemma: constr opt prob Linfty} with $\alpha = s'$ and $\beta = t'$ for 
    \begin{equation*}
        \varepsilon_i = \frac{\varepsilon i^{-\frac{s'}{1+t'}}}{2(\sum_{j=1}^N j^{-\frac{s'}{1+t'}})}.
    \end{equation*}
    with the resulting $L^\infty$ metric entropy upper bound 
    \begin{equation*}
        \log N(\varepsilon, \mathcal H^{s,t}(L; \mathcal Y_{\C}), \|\cdot\|_{\infty, \mathcal Y}) \leq \varepsilon^{-\frac{1}{\min \{s'-1, t'\}}}\leq  
            \varepsilon^{-\frac{1}{\kappa_{\infty}} -\tau},
    \end{equation*}
    which finishes the proof.
\end{proof}
\subsection{Proof of Proposition \ref{proposition: Linfty metric entropy LB}}\label{appendix: proof of Linfty metric entropy LB}

We use the same packings as in the proof of Proposition \ref{proposition: L2 metric entropy LB} with $|\Omega|\geq 3^{N/4}$ and 
\begin{equation*}
    \sum_{i=1}^N (\omega_i - \omega_i')^2 \geq \frac{N}{8}, 
\end{equation*}
for all $\omega, \omega'\in \Omega$ with $\omega \neq \omega'$. 
For the class of holomorphic functionals, recall the functionals $g_\omega : \mathcal X_{\C}\to \C$, defined by 
\begin{equation*}
    g_\omega(z)=  \delta\sum_{i=1}^N \omega_i\langle z,\widetilde\psi_i\rangle_{\mathcal X},
\end{equation*}
and that for $c\in Q$
\begin{equation*}
    g_\omega(\sigma_s(c)) = \delta\sum_{i=1}^N \omega_i i^{-s}c_i.
\end{equation*}

In the proof of Proposition \ref{proposition: L2 metric entropy LB}, we have already verified that $g_\omega \in \mathcal H^{s}_{\rm lin}(L;\C)$ for sufficiently small $\delta>0$. Let $\omega, \omega' \in \Omega$ with $\omega \neq \omega'$. Since $\operatorname{supp}(\gamma) = \sigma_s(Q)$, 
\begin{align*}
    \|g_\omega - g_{\omega'}\|_\infty 
    &=  \sup_{c\in Q}|g_\omega(\sigma_s (c))- g_{\omega'} (\sigma_s(c))| \\
    &= \delta\sup_{c\in Q} \left| \sum_{i=1}^N (\omega_i-\omega_i')i^{-s}c_i \right|\\
    & = \delta \sum_{i=1}^N |\omega_i-\omega_i'|i^{-s} \\
    &\geq \delta N^{-s} \sum_{i=1}^N |\omega_i-\omega_i'|\\
    &\geq \frac{\delta}{8} N^{-(s-1)}.
\end{align*}
Here, we used that for $(a_1, \dots, a_N) \in \R^N$ and $c = (c_1, \dots, c_N)\in [-1,1]^N$, $|\sum_{i=1}^N a_i c_i|$ is maximised exactly when $c_i = \operatorname{sgn}(a_i)$. Choosing $N \asymp \varepsilon^{-1/(s-1)}$ such that last display is equal to $\varepsilon$, proves \eqref{eq: Linfty metric entropy LB functional}. 

Let us now consider the class of holomorphic operators $\mathcal H^{s,t}_{\rm lin}(L; \mathcal{Y}_{\C})$ and set $\kappa_\infty = \min\{s-1,t\}$. For $s-1 \leq t$, $\kappa_\infty = s-1$, and we again define $G_\omega : \mathcal X \to \mathcal Y$ with
\begin{equation*}
    G_\omega (z) = g_\omega (z)\phi_1.
\end{equation*}
As in Proposition \ref{proposition: L2 metric entropy LB}
, $G_\omega \in \mathcal H^{s,t}_{\rm lin}(L; \mathcal{Y}_{\C})$ for sufficiently small $\delta>0$. Additionally, 
\begin{equation*}
    \|G_\omega-G_{\omega'}\|_{\infty,\mathcal Y} = \|\phi_1\|_{\mathcal Y} \|g_\omega-g_{\omega'}\|_\infty \geq \frac{d_\Phi}{8} N^{-(s-1)}.
\end{equation*}
Choosing $N\asymp \varepsilon^{-1/(s-1)}$ such that the last term is equal to $\varepsilon$ yields the lower bound \eqref{eq: Linfty metric entropy lower bound operator} in the first case.
If $s-1>t$, we instead use the constant operators $G_\omega: \mathcal X_{\C}\to \mathcal Y_{\C}$ with
\begin{equation*}
    G_\omega(z) = \delta N^{-t-1/2} \sum_{j=1}^N\omega_j\phi_j,
\end{equation*}
for which we know that $G_\omega \in \mathcal H^{s,t}_{\rm lin}(L; \mathcal{Y}_{\C})$. Let $\omega, \omega' \in \Omega$ and $\omega \neq \omega'$. Then, since they are constant in their input variable, 
\begin{align*}
    \|G_\omega - G_{\omega'}\|^2_{\infty, \mathcal Y} 
    &= \delta^2 N^{-2t-1}\left\|\sum_{j=1}^N(\omega_j - \omega'_j)\phi_j\right\|_{\mathcal Y}\\
    &\geq \delta^2 d_\Phi^2 N^{-2t-1}\sum_{j=1}^{N}(\omega_j - \omega_j
')^2\\
&\geq \frac{\delta^2d_\Phi^2}{8} N^{-2t}.
\end{align*}
Choosing $N \asymp \varepsilon^{-1/t}$ such that the last display is equal to $\varepsilon^2$, proves the metric entropy lower bound \eqref{eq: Linfty metric entropy lower bound operator} in the second case.

\section{Proofs from Section \ref{section: lower bounds}}\label{appendix: proof section 4}
\subsection{Proof of Theorem \ref{theorem: lower bounds random design}}\label{appendix: proof of lower bounds random design}
It suffices to prove that the $KL$-divergence between two observation laws $\P_{G}$, $\P_{ G'}$ is given by 
\begin{equation*}
    KL(\P_G, \P_{G'}) = \frac{n}{2\sigma^2}\|G-G'\|_{L^2(\gamma, \mathcal Y)}^2.
\end{equation*}
Note that the law of $Y_i = G_{}(x_i) + \varepsilon$, which is denoted by $\P_{G}$, can be conditioned on the law of the design points $x_i$, in the sense that $dP_{G}(x,Y) = dP_{G}(Y|x) d\gamma(x)$. By the chain rule for the KL-divergence \parencite[Theorem 2.5.3]{cover1991elements} 
\begin{align*}
    KL(P_{G}, P_{G'}) &= \int_{\mathcal X \times \mathcal Y} \log \left(\frac{dP_{G}(x,Y)}{dP_{G'}(x,Y)}\right) dP_{G}(x,Y) \\
    &= \int_{\mathcal X}\int_{\mathcal Y} \log \left(\frac{dP_{G}(Y|x) d\gamma(x)}{dP_{G'}(Y|x) d\gamma(x)}\right) dP_{G}(Y|x) d\gamma(x) \\
    &= \int_{\mathcal X } \left (\int_{\mathcal Y}\log \left(\frac{dP_{G}(Y|x)}{dP_{G'}(Y|x)}\right) dP_{G}(Y|x) \right)d\gamma(x) \\
    & = \frac{n}{2\sigma^2} \int_{\mathcal X} \lVert G(x) - G'(x) \rVert_{\mathcal Y}^2 \;d\gamma(x) \\
    & = \frac{n}{2\sigma^2} \lVert G - G'\rVert_{L^2(\gamma, \mathcal Y)}^2,
\end{align*}
where we used that the conditional law $\P_{G'}(\cdot|x)$ is a Gaussian, with isonormal covariance structure on $\mathcal Y$.
The rest of the proof is the same as in Theorem \ref{theorem: Lower Bounds} and Theorem \ref{theorem: lower bounds functional}.

\printbibliography
\end{document}